\tikzset{>={Latex[width=1.2mm,length=1.7mm]}}
\newtheorem{thm}{Theorem}[section]
\newtheorem{prop}[thm]{Proposition}
\newtheorem{cor}[thm]{Corollary}
\newtheorem{lem}[thm]{Lemma}
\newtheorem{obs}[thm]{Observation}
\newtheorem{conj}[thm]{Conjecture}
\newtheorem{quest}[thm]{Question}
\newtheorem{prob}[thm]{Problem}
\numberwithin{equation}{section}
\newcommand{\sn}{\mathfrak{S}_n}
\newcommand{\mfs}[1]{\mathfrak{S}_{#1}}
\newcommand{\zsn}{\mathbb{Z}[\sn]}
\newcommand{\qsn}{\mathbb{Q}[\sn]}
\newcommand{\zqq}{\mathbb{Z}[\qp12, \qm12]}
\newcommand{\csn}{\mathbb{C}[\sn]}
\newcommand{\qp}[2]{q^{\frac{#1}{#2}}}
\newcommand{\qm}[2]{q^{\negthinspace\Bar\,\frac{#1}{#2}}}
\newcommand{\ol}[1]{\overline{#1}}
\newcommand{\hnq}{H_n(q)}
\newcommand{\trspace}[1]{\mathcal{T}_{#1}}
\newcommand{\wtc}[2]{\widetilde{C}_{#1}(#2)}
\newcommand{\dksf}[2]{\tilde{F}_{#2}^{(#1)}}
\newcommand{\ksf}[2]{s_{#2}^{(#1)}}
\newcommand{\qdkst}[2]{\delta_q^{#2,(#1)}}
\newcommand{\dkst}[2]{\delta^{#2,(#1)}}
\newcommand{\qkst}[2]{\chi_q^{#2,(#1)}}
\newcommand{\kst}[2]{\chi^{#2,(#1)}}
\newcommand{\bolk}[3]{\mathbf{K}_{#2,#3}^{(#1)}}
\newcommand{\imm}[1]{\mathrm{Imm}_{#1}}
\newcommand{\sumsb}[1]{\sum_{\substack{#1}}}  
\newcommand{\stat}{\textsc{stat}}
\newcommand{\inv}{\mathrm{inv}}
\newcommand{\pinv}{\mathrm{inv}_{P}}
\newcommand{\rinv}{\textsc{rinv}_P}
\newcommand{\defeq}{:=} 
\newcommand{\spn}{\mathrm{span}}
\newcommand{\sgn}{\mathrm{sgn}}
\newcommand{\triv}{\mathrm{triv}}
\newcommand{\wgt}{\mathrm{wgt}}
\newcommand{\type}{\mathrm{type}}
\newcommand{\sh}{\mathrm{sh}}
\newcommand{\rp}{\mathrm{rp}}
\newcommand{\pavoiding}{$3412$-avoiding, $4231$-avoiding }
\newcommand{\avoidsp}{avoids the patterns $3412$ and $4231${}}
\newcommand{\avoidingp}{avoiding the patterns $3412$ and $4231${}}
\newcommand{\ssec}[1]{\subsection{#1}{$\negthinspace$}}
\newcommand{\tr}{{\negthickspace \top \negthickspace}}
\newcommand{\ktr}[1]{{{#1}\ntnsp\top \negthickspace}}
\newcommand{\tnsp}{\hspace{1mm}}
\newcommand{\ntnsp}{\negthinspace}
\newcommand{\ntksp}{\negthickspace}
\newcommand{\nTksp}{\negthickspace\negthickspace}
\newcommand{\bp}{\begin{prob}}
\newcommand{\ep}{\end{prob}}
\newcommand{\mat}[1]{\mathrm{Mat}_{#1 \times #1}}
\newcommand{\perm}{\mathrm{per}}
\newcommand{\ctype}{\mathrm{ctype}}
\newcommand{\frobch}{\mathrm{ch}}
\newcommand{\inc}{\mathrm{inc}}
\newcommand{\ngr}{\mathrm{ngr}}
\newcommand{\permmon}[2]{#1_{1,#2_1} \ntnsp\cdots {#1}_{n,#2_n}}
\newcommand{\ssm}{\smallsetminus}
\newcommand{\multiu}{\Cup}
\newcommand{\core}[2]{\mathfrak{c}_{#1}(#2)}
\newcommand{\circd}[1]{\raisebox{-9pt}{\textcircled{\raisebox{-.9pt}{#1}}}}
\newcommand{\pexc}{\mathrm{exc}_P}
\newcommand{\paexc}{\mathrm{aexc}_P}
\newcommand{\pdes}{\mathrm{des}_P}
\newcommand{\pasc}{\mathrm{asc}_P}
\newcommand{\exc}{\mathrm{exc}}
\newcommand{\aexc}{\mathrm{aexc}}
\newcommand{\rec}{\mathrm{rec}}
\newcommand{\des}{\mathrm{des}}
\newcommand{\iDES}{\textsc{ides}}
\newcommand{\cp}{\mathrm{cp}}
\newcommand{\mcp}{\mathcal{P}}
\newcommand{\oisp}{\mathcal{I}}
\newcommand{\upparrow}{\big \uparrow \nTksp \phantom{\uparrow}}
\newcommand{\olj}{\ol{\phantom j} \nTksp\ntnsp J}
\newcommand{\phsum}{\phantom{\sum_A^Z}}
\newcommand{\phm}{\phantom M}
\newcommand{\phn}{\phantom{ni}}
\def\hhhsp{\def\baselinestretch{0.125}\large\normalsize}
\def\ssp{\def\baselinestretch{1.0}\large\normalsize}
\begin{document}
\author{Mark Skandera}
\title{Characters and chromatic symmetric functions}

\bibliographystyle{../dart}

\date{\today}

\begin{abstract}
  Given a poset $P$, let $\inc(P)$ be its incomparability graph,
  $X_{\inc(P)}$ the corresponding chromatic symmetric function
  defined by Stanley in {\em Adv. Math.}, {\bf 111} (1995) pp.~166--194.
  Let $\omega$ be the standard involution on symmetric functions.
  Certain conditions on $P$ imply that the expansions of $X_{\inc(P)}$
  and $\omega X_{\inc(P)}$ in standard symmetric function bases
  yield coefficients which have simple combinatorial interpretations.
  By expressing coefficients of $X_{\inc(P)}$ and $\omega X_{\inc(P)}$
  as character evaluations, we obtain simple combinatorial interpretations
  for coefficients of the power sum and monomial expansions of
  $\omega X_{\inc(P)}$ which hold for
  {\em all} posets $P$.
  Consequences include new combinatorial interpretations of the permanent,
  induced trivial character immanants, and power sum immanants
  of totally nonnegative matrices,
  and of the sum of elementary coefficients in the Shareshian-Wachs
  chromatic quasisymmetric function $X_{\inc(P),q}$ when $P$ is a unit interval
  order.
\end{abstract}
  

\maketitle


\section{Introduction}\label{s:intro}

The Frobenius isomorphism
from the space $\trspace n$ of symmetric group traces
to the space $\Lambda_n$ of homogeneous degree-$n$
symmetric functions,
\begin{equation}\label{eq:Frob}
  \begin{aligned}
    \mathrm{Frob}: \trspace n &\rightarrow \Lambda_n\\
    \theta &\mapsto \frac 1{n!} \sum_{w \in \sn} \theta(w) p_{\ctype(w)},
    \end{aligned}
\end{equation}
where
$\ctype(w)$ is the cycle type of $w$,
allows one to translate statements about the representation theory
of the symmetric group $\sn$ to the language of symmetric functions.
Conversely, one may use the inverse of the Frobenius isomorphism
to study symmetric functions, such as Stanley's chromatic symmetric
functions $X_G$~\cite{StanSymm}, in terms of $\sn$-class functions.
In particular,
for $G$ the incomparability graph $\inc(P)$ of a poset $P$,
we will expand $X_G$ in the standard symmetric function bases,
and we will use
the inverse Frobenius isomorphism to interpret the resulting coefficients.
Our main tool is reminiscent of the Cauchy and
dual Cauchy identities~\cite[\S I.4]{M1}
for symmetric functions in two sets of variables,
  \begin{multline}\label{eq:cauchy}
    \prod_{i,j \geq 1} \frac1{1 - y_ix_j} =
    \sum_\lambda e_\lambda(y)f_\lambda(x) =
    \sum_\lambda h_\lambda(y)m_\lambda(x) =
    \sum_\lambda \frac{p_\lambda(y)}{z_\lambda} p_\lambda(x)\\
    =\sum_\lambda s_\lambda(y)s_\lambda(x) =
    \sum_\lambda m_\lambda(y)h_\lambda(x) =
    \sum_\lambda f_\lambda(y)e_\lambda(x),
  \end{multline}
  \begin{multline}\label{eq:dualcauchy}
    \prod_{i,j \geq 1} (1 + y_ix_j) =
    \sum_\lambda e_\lambda(y)m_\lambda(x) =
    \sum_\lambda h_\lambda(y)f_\lambda(x) =
    \sum_\lambda \frac{(-1)^{n-\ell(\lambda)}p_\lambda(y)}{z_\lambda} p_\lambda(x)\\
    =\sum_\lambda s_{\lambda^\tr}(y)s_\lambda(x) =
    \sum_\lambda m_\lambda(y)e_\lambda(x) =
    \sum_\lambda f_\lambda(y)h_\lambda(x),
  \end{multline}
  with the inverse Frobenius isomorphism applied only to the symmetric
  functions in $y$.

  In Section~\ref{s:sftrace} we present standard bases of
  the trace space of the Hecke algebra $\hnq$,
  the trace space of the symmetric group algebra $\zsn$,
  and the space $\Lambda_n$ of homogeneous degree-$n$ symmetric functions.
  We show that the expansion
  of any homogeneous degree-$n$ symmetric function 
  in any standard basis of $\Lambda_n$ yields coefficients
  which are trace evaluations.
In Section~\ref{s:chrom} we apply this result to the
standard expansions of chromatic symmetric functions of the form $X_{\inc(P)}$
and the related symmetric functions $\omega X_{\inc(P)}$.
We obtain combinatorial interpretations for resulting coefficients
of $X_{\inc(P)}$ and $\omega X_{\inc(P)}$ for {\em all} posets $P$,
thus extending previous results
which hold only for special classes of posets.
In particular, we interpret the monomial and power sum coefficients
of $\omega X_{\inc(P)}$ in Theorems~\ref{t:etalambdaP} and \ref{t:psilambdaP},
respectively.
We also obtain a new proof in Proposition~\ref{p:Kali}
of Kaliszewski's interpretation of hook-Schur coefficients.
In each case, the trace evaluations allow for very simple proofs of our results.
In Section~\ref{s:tnn} we apply our interpretations of trace evaluations
to functions of totally
nonnegative matrices, obtaining new interpretations of these.
In particular, we interpret ``hook'' irreducible character immanants
in Theorem~\ref{t:hookimm}, induced trivial character immanants in
Theorem~\ref{t:etaimm}, and power sum immanants in Theorem~\ref{t:powerimm}.
These results include two new interpretations
(\ref{eq:introdes}) -- (\ref{eq:introexc})
of the permanent of a totally nonnegative matrix and play an important
role in the evaluation of hyperoctahedral group characters at elements of
the type-$BC$ Kazhdan-Lusztig basis~\cite{SkanGCEHGC}.
These results also lead to a new expression in Section~\ref{s:quasi}
for the sum of elementary coefficients of
the Shareshian-Wachs chromatic quasisymmetric function $X_{\inc(P),q}$
when $P$ is a unit interval order.




\section{Symmetric functions and traces}\label{s:sftrace}

Let $\Lambda$ be the ring of symmetric functions in 
$x = (x_1, x_2, \dotsc)$ having integer coefficients, and
let $\Lambda_n$ be the $\mathbb Z$-submodule of homogeneous functions
of degree $n$.
This submodule has rank equal to the number of integer
partitions of $n$, the
weakly decreasing positive integer sequences
$\lambda = (\lambda_1, \dotsc, \lambda_\ell)$ satisfying
$\lambda_1 + \cdots + \lambda_\ell = n$.
The $\ell = \ell(\lambda)$ components of $\lambda$ are called its {\em parts},
and we let $|\lambda| = n$ and $\lambda \vdash n$ denote that
$\lambda$ is a partition of $n$.  Given $\lambda \vdash n$,
we define the {\em transpose} partition
$\lambda^\tr = (\lambda^\tr_1, \dotsc, \lambda^\tr_{\lambda_1})$
by
\begin{equation*}
  \lambda^\tr_i = \# \{ j \,|\, \lambda_j \geq i \}.
\end{equation*}
Sometimes it is convenient to name a partition with exponential notation,
omitting parentheses and commas, so that
$4^21^6 \defeq (4, 4, 1, 1, 1, 1, 1, 1)$.
We define a {\em composition} of $n$ to be any rearrangement
of a partition of $n$ and write $\alpha \vDash n$ to denote that $\alpha$
is a composition of $n$.
Six standard bases of $\Lambda_n$ consist of the
monomial $\{m_\lambda \,|\, \lambda \vdash n\}$,
elementary $\{e_\lambda \,|\, \lambda \vdash n\}$,
(complete) homogenous $\{h_\lambda \,|\, \lambda \vdash n\}$,
power sum $\{p_\lambda \,|\, \lambda \vdash n\}$,
Schur $\{s_\lambda \,|\, \lambda \vdash n\}$, and
forgotten $\{f_\lambda \,|\, \lambda \vdash n\}$ symmetric functions.
(See, e.g., \cite[Ch.\,7]{StanEC2} for definitions.)
An involutive automorphism $\omega: \Lambda \rightarrow \Lambda$ defined
by $\omega(e_k) = h_k$ for all $k$ acts on these bases of $\Lambda_n$ by
\begin{equation*}
  \omega(s_\lambda) = s_{\lambda^\tr}, \qquad
  \omega(m_\lambda) = f_\lambda, \qquad
  \omega(e_\lambda) = h_\lambda, \qquad
  \omega(p_\lambda) = (-1)^{n-\ell(\lambda)}p_\lambda.
\end{equation*}

Let $\hnq$ be the (type $A$)
{\em Hecke algebra}, 
generated over $\zqq$ by $T_{s_1}, \dotsc, T_{s_{n-1}}$ subject to relations
\begin{equation*}
  \begin{alignedat}2
    T_{s_i}^2 &= (q-1)T_{s_i} + q &\qquad &\text{for $i = 1,\dotsc,n-1$}, \\
    T_{s_i}T_{s_j} &= T_{s_j}T_{s_i} &\qquad &\text{for $|i-j| \geq 2$}, \\
    T_{s_i}T_{s_j}T_{s_i} &= T_{s_j}T_{s_i}T_{s_j} &\qquad &\text{for $|i-j| =1$}.
  \end{alignedat}
\end{equation*}
For each $w \in \sn$ and
$w = s_{i_1} \ntnsp \cdots s_{i_\ell}$ a reduced expression,
define the natural basis element
$T_w = T_{s_{i_1}}\ntnsp \cdots T_{s_{i_\ell}}$
(which does not depend upon the choice of a reduced expression).
(See, e.g., \cite{BBCoxeter}.)
The (modified) {\em Kazhdan-Lusztig basis} of $\hnq$
as a $\zqq$-module consists of elements
$\{ \wtc wq \,|\, w \in \sn \}$ related to the natural basis by
\begin{equation}\label{eq:kl}
  \wtc wq = \sum_{v \leq w} P_{v,w}(q) T_v,
  \end{equation}
where $\leq$ is the Bruhat order on $\sn$, and
where $\{ P_{v,w}(q) \,|\, v, w \in \sn \}$ are the 
recursively defined {\em Kazhdan-Lusztig polynomials}.
(
Our basis element $\wtc wq$
is $\smash{\qp{\ell(w)}2}$ times the basis element $C'_w$
in \cite{KLRepCH}.)
When $w$ \avoidsp~(the one-line notation $w_1 \cdots w_n$ contains no subword
$w_{i_1}w_{i_2}w_{i_3}w_{i_4}$ whose letters have values appearing
in the same relative order as $4231$ or $3412$),
each polynomial $P_{v,w}(q)$ is identically $1$.

Let $\trspace{n,q}$ be the $\zqq$-module of
$\hnq$-{\em traces},
linear functionals
$\theta_q: \hnq \rightarrow \zqq$ satisfying
$\theta_q(gh) = \theta_q(hg)$ for all $g, h \in \hnq$.
For any trace $\theta_q: T_w \mapsto a(q)$ in $\trspace{n,q}$,
the $\qp12 =1$ specialization $\theta: w \mapsto a(1)$
belongs to the space $\trspace n \defeq \trspace{n,1}$ of $\zsn$-traces
from $\zsn \rightarrow \mathbb Z$ ($\sn$-class functions).
Like the $\mathbb Z$-module $\Lambda_n$,
the trace spaces $\trspace{n,q}$ and $\trspace n$ have
dimension equal to the number of integer partitions of $n$.  
The Frobenius $\mathbb Z$-module isomorphism (\ref{eq:Frob})
and its $q$-extension,
$\mathrm{Frob}_q: \trspace {n,q} \rightarrow \Lambda_n$,
$\theta_q \mapsto \mathrm{Frob}(\theta)$,
define bijections
between standard
bases of $\Lambda$, 
$\trspace n$,
and
$\trspace{n,q}$.
Schur functions correspond to irreducible characters,
\begin{equation*}
  s_\lambda \leftrightarrow \chi^\lambda \leftrightarrow \chi_q^\lambda,
\end{equation*}
while elementary and homogeneous symmetric functions
correspond to induced sign and trivial characters,
\begin{equation*}
  \begin{gathered}
  e_\lambda \leftrightarrow \epsilon^\lambda = \sgn \upparrow^{\sn}_{\mfs \lambda}
  \leftrightarrow \epsilon_q^\lambda = \sgn_q \upparrow^{\hnq}_{H_\lambda(q)},\\
  h_\lambda \leftrightarrow \eta^\lambda = \triv \upparrow^{\sn}_{\mfs \lambda}
  \leftrightarrow \eta_q^\lambda = \triv_q \upparrow^{\hnq}_{H_\lambda(q)},
  \end{gathered}
\end{equation*}
where $\mfs \lambda$ is the Young subgroup of $\sn$ indexed by $\lambda$ and
$H_\lambda(q)$ is the corresponding parabolic subalgebra of $\hnq$.
The power sum,
monomial,
and forgotten bases of $\Lambda_n$ correspond
to
bases of $\trspace n$ ($\trspace {n,q}$)
which are not characters.
We call these the
{\em power sum}
$\{\psi^\lambda \,|\, \lambda \vdash n \}$
($\{\psi_q^\lambda \,|\, \lambda \vdash n \}$),
{\em monomial}
$\{ \phi^\lambda \,|\, \lambda \vdash n \}$
($\{ \phi_q^\lambda \,|\, \lambda \vdash n \}$),
and {\em forgotten}
$\{ \gamma^\lambda \,|\, \lambda \vdash n \}$
($\{ \gamma_q^\lambda \,|\, \lambda \vdash n \}$)
traces, respectively.
These
are
the bases related to the irreducible character bases
by the same matrices of character evaluations and
inverse Koskta numbers that relate
power sum, monomial, and forgotten symmetric functions to Schur functions,
\begin{equation}\label{eq:mforgotten}
  \begin{alignedat}{3}
    p_\lambda &= \sum_\mu \chi^\mu(\lambda) s_\mu, &\qquad
    \psi^\lambda &= \sum_\mu \chi^\mu(\lambda) \chi^\mu, &\qquad
    \psi_q^\lambda &= \sum_\mu \chi^\mu(\lambda) \chi_q^\mu,\\
    m_\lambda &= \sum_\mu K_{\lambda,\mu}^{-1} s_\mu, &\qquad
    \phi^\lambda &= \sum_\mu K_{\lambda,\mu}^{-1} \chi^\mu, &\qquad
    \phi_q^\lambda &= \sum_\mu K_{\lambda,\mu}^{-1} \chi_q^\mu,\\
    f_\lambda &= \sum_\mu K_{\lambda,\mu^\tr\;}^{-1} s_\mu, &\qquad
    \gamma^\lambda &= \sum_\mu K_{\lambda,\mu^\tr}^{-1} \chi^\mu, &\qquad
    \gamma_q^\lambda &= \sum_\mu K_{\lambda,\mu^\tr}^{-1} \chi_q^\mu,
  \end{alignedat}
\end{equation}
where $\chi^\mu(\lambda) \defeq \chi^\mu(w)$
for any $w \in \sn$ having $\ctype(w) = \lambda$.
Just as the power sum symmetric functions form a
$\mathbb Q$-basis of $\Lambda_n$, the power sum traces form
$\mathbb Q$-bases of $\trspace n$ and $\trspace {n,q}$.
The power sum traces of $\trspace n$
also have the natural definition
\begin{equation}\label{eq:psidef}
  \psi^\lambda(w) \defeq \begin{cases}
    z_\lambda &\text{if $\ctype(w) = \lambda$},\\
    0 &\text{otherwise},
  \end{cases}
\end{equation}
where $z_\lambda = \lambda_1 \cdots \lambda_\ell \alpha_1! \cdots \alpha_n!$
and $\alpha_i$ is the number of parts of $\lambda$ equal to $i$.

It can be useful to record trace evaluations
in a symmetric generating function.
In particular, for $g \in \mathbb Q(q) \otimes \hnq$,
we record induced sign character evaluations by defining
\begin{equation}\label{eq:ygdef}
  Y_q(g) \defeq \sum_{\lambda \vdash n} \epsilon_q^\lambda(g) m_\lambda
  \in \mathbb Q(q) \otimes \Lambda_n.
\end{equation}
This symmetric generating function in fact gives us all of the
standard trace evaluations.
\begin{prop}\label{p:Yexpand}
  The symmetric function $Y_q(g)$ is equal to
  \begin{equation*}
      \sum_{\lambda \vdash n} \eta_q^\lambda(g)f_\lambda
    = \sum_{\lambda \vdash n}\frac{(-1)^{n-\ell(\lambda)}\psi_q^\lambda(g)}{z_\lambda}p_\lambda
    = \sum_{\lambda \vdash n} \chi_q^{\lambda^\tr}(g)s_\lambda
    = \sum_{\lambda \vdash n} \phi_q^\lambda(g)e_\lambda
    = \sum_{\lambda \vdash n} \gamma_q^\lambda(g)h_\lambda;
  \end{equation*}
equivalently, $\omega Y_q(g)$ is equal to  
\begin{equation}\label{eq:sumepsilon}
    \sum_{\lambda \vdash n} \epsilon_q^\lambda(g)f_\lambda
  = \sum_{\lambda \vdash n} \eta_q^\lambda(g)m_\lambda
  = \sum_{\lambda \vdash n}\frac{\psi_q^\lambda(g)}{z_\lambda}p_\lambda
  = \sum_{\lambda \vdash n} \chi_q^\lambda(g)s_\lambda
  = \sum_{\lambda \vdash n} \phi_q^\lambda(g)h_\lambda
  = \sum_{\lambda \vdash n} \ntnsp \gamma_q^\lambda(g)e_\lambda.\nTksp
\end{equation}
\end{prop}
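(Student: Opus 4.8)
The plan is to treat $Y_q(g)$, initially defined via the monomial expansion with induced sign character coefficients, as a known symmetric function and then read off its coefficients in the other five standard bases using two inputs: the change-of-basis formulas among symmetric functions, and the correspondence between standard bases of $\Lambda_n$ and standard bases of $\trspace{n,q}$ recorded in (\ref{eq:mforgotten}) together with the $e \leftrightarrow \epsilon_q$, $h \leftrightarrow \eta_q$, $s \leftrightarrow \chi_q$ dictionary. The key structural fact I would use is that $\mathrm{Frob}_q$ is a $\zqq$-module isomorphism carrying each named trace basis to the correspondingly named symmetric function basis; consequently, for a fixed $g$, the linear functional $\theta_q^{(g)}\colon \wtc wq \text{ or } T_w \mapsto (\text{coefficient data})$ has a well-defined expansion in every trace basis, and the coefficients in two different trace bases are related by exactly the same transition matrix that relates the corresponding symmetric-function bases.

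First I would observe that by definition $Y_q(g) = \sum_\lambda \epsilon_q^\lambda(g) m_\lambda$, and that $\{m_\lambda\}$ is dual to $\{h_\lambda\}$ under the Hall inner product, while $\{\epsilon_q^\lambda\}$ and $\{\eta_q^\lambda\}$ are the trace-space images of $\{e_\lambda\}$ and $\{h_\lambda\}$. The cleanest route is to recognize $Y_q(g)$ as the image of a single trace $\theta_q = \theta_q^{(g)} \in \trspace{n,q}$ under $\mathrm{Frob}_q$, where $\theta_q$ is characterized by $\epsilon_q^\lambda(g) = \langle \theta_q, \epsilon_q^\lambda\rangle$ in the appropriate pairing — more concretely, $\theta_q$ is the trace whose monomial-basis coordinates are the numbers $\epsilon_q^\lambda(g)$. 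Then $\mathrm{Frob}_q(\theta_q)$ expanded in the Schur basis has coefficients equal to the $\chi_q^\mu$-coordinates of $\theta_q$, expanded in the $e$-basis has coefficients equal to the $\phi_q$-coordinates, and so on, and each of these equals the corresponding evaluation at $g$ because the transition matrices in (\ref{eq:mforgotten}) are exactly the classical ones (Kostka numbers, inverse Kostka numbers, character table). The duality/transpose bookkeeping — monomial pairs with homogeneous, forgotten pairs with elementary, Schur is self-dual up to transpose — is what converts the $m$-expansion into the $h$-, $f$-, $e$-, $p$-, and $s$-expansions; applying $\omega$ swaps the members of each dual pair and sends $s_\lambda \mapsto s_{\lambda^\tr}$, $p_\lambda \mapsto (-1)^{n-\ell(\lambda)}p_\lambda$, which accounts for the passage between the two displayed lists.

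Concretely the steps are: (1) fix $g$, set $\theta_q := \sum_\lambda \epsilon_q^\lambda(g)\,\phi_q^\lambda \in \trspace{n,q}$, the trace with those monomial-basis coordinates, and check $\mathrm{Frob}_q(\theta_q) = Y_q(g)$; (2) re-expand $\theta_q$ in each of the other four non-Schur trace bases and in the $\chi_q$-basis using (\ref{eq:mforgotten}), obtaining coordinates $\eta_q^\lambda(g)$, $\psi_q^\lambda(g)/z_\lambda$ (with the $\omega$-sign twist giving $(-1)^{n-\ell(\lambda)}$), $\gamma_q^\lambda(g)$, and $\chi_q^{\lambda^\tr}(g)$; (3) apply $\mathrm{Frob}_q$ to each expansion to get the five displayed expansions of $Y_q(g)$; (4) apply $\omega$ and use its action on the six bases, together with $\omega\,\epsilon_q^\lambda$-vs-$\eta_q^\lambda$ type identities, to deduce (\ref{eq:sumepsilon}). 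I expect the main obstacle to be purely organizational rather than mathematical: one must pin down precisely which pairing makes ``$\epsilon_q^\lambda(g) = $ the $\phi_q^\lambda$-coordinate of $\theta_q$'' correct, i.e. verify that the basis of $\trspace{n,q}$ dual to $\{\epsilon_q^\lambda\}$ under the relevant form is $\{\phi_q^\lambda\}$ — equivalently, that the matrix pairing $(e_\lambda, m_\mu)$ is the identity and is compatible with the Frobenius correspondence — and then to track the transposes and the $(-1)^{n-\ell(\lambda)}$ signs consistently through $\omega$. Once that dictionary is fixed, every line of the proposition is a one-step substitution.
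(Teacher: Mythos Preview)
Your proposal is sound in spirit and rests on the same underlying fact the paper uses: the transition matrices among the six trace bases are identical to those among the six symmetric-function bases, so expanding $Y_q(g)$ in any basis amounts to a one-line substitution once you know the right dual pairing. However, the paper's implementation is considerably more direct than yours. Rather than introducing the auxiliary trace $\theta_q=\sum_\lambda \epsilon_q^\lambda(g)\,\phi_q^\lambda$ and then re-expanding it in every other trace basis, the paper takes the Schur expansion $\sum_\lambda \chi_q^\lambda(g)\,s_\lambda$ of $\omega Y_q(g)$ as a hub and shows each other sum equals it by a single computation: if $s_\lambda=\sum_\mu M_{\lambda,\mu}\,t_\mu$, then by the Frobenius dictionary the corresponding trace satisfies $\theta_q^\mu=\sum_\lambda M_{\lambda,\mu}\,\chi_q^\lambda$ (same matrix, transposed indexing), and substituting gives $\sum_\lambda \chi_q^\lambda(g)\,s_\lambda=\sum_\mu \theta_q^\mu(g)\,t_\mu$ immediately.

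The advantage of the paper's choice of pivot is that the self-duality $\langle s_\lambda,s_\mu\rangle=\delta_{\lambda\mu}$ makes the ``duality/transpose bookkeeping'' you flag as the main obstacle completely trivial: the Kostka matrix appears with the same index placement in both $s_\lambda=\sum_\mu K_{\lambda,\mu}m_\mu$ and $\eta_q^\mu=\sum_\lambda K_{\lambda,\mu}\chi_q^\lambda$, and similarly for the other four matrices $K_{\lambda^\tr,\mu}$, $\chi^\lambda(\mu)$, $K^{-1}_{\mu,\lambda}$, $K^{-1}_{\mu,\lambda^\tr}$. Your route through the $\phi_q$-basis instead forces you to verify relations like ``the $m\to f$ transition matrix is the transpose of the $e\to h$ transition matrix'' and ``the $e\to h$ matrix is an involution,'' which are true but add work that the Schur pivot avoids entirely.
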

\noindent
While this follows from (\ref{eq:cauchy}) -- (\ref{eq:dualcauchy}),
we provide a short proof.
\begin{proof}
  Consider the second and fourth sums in (\ref{eq:sumepsilon}),
  in which the symmetric functions and traces satisfy
  \begin{equation}\label{eq:kostkasum}
    s_\lambda = \sum_{\mu \vdash n} K_{\lambda, \mu} m_\mu,
    \qquad
    \eta_q^\mu = \sum_{\lambda \vdash n} K_{\lambda, \mu} \chi_q^\lambda.
  \end{equation}
  Using (\ref{eq:kostkasum}) to expand the fourth sum in the
  monomial symmetric function basis, we have
     \begin{equation*}
      \sum_{\lambda \vdash n} \chi_q^\lambda(g) \sum_{\mu \vdash n} K_{\lambda,\mu} m_\mu
      = \sum_{\mu \vdash n} \sum_{\lambda \vdash n} K_{\lambda,\mu} \chi_q^\lambda(g) m_\mu
      = \sum_{\mu \vdash n} \eta_q^\mu(g) m_\mu,
     \end{equation*}
     i.e., it is equal to the second sum.
    Similarly, for each of the remaining sums
    $\sum_\lambda \theta_q^\lambda(g) t_\lambda$ in (\ref{eq:sumepsilon}),
    there is a matrix $(M_{\lambda,\mu})_{\lambda,\mu \vdash n}$
    and equations
    \begin{equation*}
    s_\lambda = \sum_{\mu \vdash n} M_{\lambda, \mu} t_\mu,
    \qquad
    \theta_q^\mu = \sum_{\lambda \vdash n} M_{\lambda, \mu} \chi_q^\lambda,
    \end{equation*}
    relating it to the fourth sum.
    In particular,
    $M_{\lambda,\mu} =
    K_{\lambda^\tr,\mu}, \chi^\lambda(\mu), K^{-1}_{\mu, \lambda}, K^{-1}_{\mu, \lambda^\tr}$,
    respectively.
    (See \cite[\S 2]{RemTrans}.)    
\end{proof}

Since each symmetric function is a quasisymmetric function, researchers
sometimes express elements of $\Lambda_n$ in terms of bases of the
$\mathbb Z$-module $\mathrm{QSym}_n$ of degree-$n$ quasisymmetric
functions.  (See \cite[\S 7.19]{StanEC2} for definitions).
The coefficients arising in such expansions
also can be viewed as trace evaluations.
In particular, let
$\{ F_{n,S} \,|\, S \subseteq [n-1] \}$ be the
fundamental quasisymmetric function basis of $\mathrm{QSym}_n$.
For any Young tableau $U$ of shape $\lambda = (\lambda_1, \dotsc, \lambda_r)$,
let $U_1, \dotsc, U_r$ denote its rows, and let
$\circ$ denote concatenation of rows.
Define the {\em inverse descent set}
of $U$ by
\begin{equation*}
  \iDES(U) = \{ i \in [n-1] \,|\, i+1 \text{ appears before } i \text{ in }
  U_r \circ \cdots \circ U_1 \}.
\end{equation*}
Now we have the following fundamental
quasisymmetric expansion of $Y_q(g)$.
\begin{cor}\label{c:fundamental}
  For $\lambda = (\lambda_1, \dotsc, \lambda_r) \vdash n$
  and $S \subseteq [n-1]$, define $b(\lambda,S)$
  to be the number of standard Young tableaux $U$ of shape $\lambda$
  with $\iDES(U) = S$.
  Then we have
  \begin{equation*}
    Y_q(g) = \nTksp \sum_{S \subseteq [n-1]} \sum_{\lambda \vdash n}
    b(\lambda,S) \chi_q^{\lambda^\tr}(g) F_{n,S}; \qquad
    \omega Y_q(g) = \nTksp \sum_{S \subseteq [n-1]} \sum_{\lambda \vdash n}
    b(\lambda,S) \chi_q^{\lambda}(g) F_{n,S}.
    \end{equation*}
\end{cor}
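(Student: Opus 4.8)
The plan is to derive this corollary from Proposition~\ref{p:Yexpand} by expanding the Schur functions appearing there in the fundamental quasisymmetric basis. The key fact is the classical expansion
\begin{equation*}
  s_\lambda = \sum_{S \subseteq [n-1]} b(\lambda, S)\, F_{n,S},
\end{equation*}
where $b(\lambda,S)$ counts standard Young tableaux of shape $\lambda$ with $\iDES(U) = S$; this is the standard formula obtained by grouping the semistandard tableaux contributing to $s_\lambda$ according to the standardization of their reading word, or equivalently by summing the fundamental quasisymmetric functions indexed by the descent compositions of standard tableaux of shape $\lambda$. (It appears in, e.g., \cite[\S 7.19]{StanEC2}, using exactly the inverse-descent convention recorded above.)

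First I would take the penultimate sum from Proposition~\ref{p:Yexpand}, namely $Y_q(g) = \sum_{\lambda \vdash n} \chi_q^{\lambda^\tr}(g) s_\lambda$, and substitute the Schur expansion above for each $s_\lambda$. Interchanging the two finite sums gives
\begin{equation*}
  Y_q(g) = \sum_{\lambda \vdash n} \chi_q^{\lambda^\tr}(g) \sum_{S \subseteq [n-1]} b(\lambda,S)\, F_{n,S}
  = \sum_{S \subseteq [n-1]} \sum_{\lambda \vdash n} b(\lambda,S)\, \chi_q^{\lambda^\tr}(g)\, F_{n,S},
\end{equation*}
which is the first claimed identity. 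For the second identity I would instead start from $\omega Y_q(g) = \sum_{\lambda \vdash n} \chi_q^{\lambda}(g) s_\lambda$ (the fourth sum in \eqref{eq:sumepsilon}) and apply the same Schur expansion and interchange of summations, yielding $\omega Y_q(g) = \sum_{S} \sum_{\lambda} b(\lambda,S)\, \chi_q^{\lambda}(g)\, F_{n,S}$. Both manipulations are purely formal once the Schur-to-fundamental expansion is in hand, since all sums are finite.

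The only point requiring care — and the main obstacle, such as it is — is confirming that the combinatorial statistic $b(\lambda,S)$ as defined in the corollary (standard Young tableaux of shape $\lambda$ with inverse descent set exactly $S$, using the reading order $U_r \circ \cdots \circ U_1$) matches the statistic appearing in the classical Schur expansion. This is a matter of reconciling conventions: the reading word and the notion of "descent" versus "inverse descent" must be aligned with the chosen formula for $s_\lambda$ in the fundamental basis, and with the relationship between $s_\lambda$ and $s_{\lambda^\tr}$ under $\omega$ (since $\omega s_\lambda = s_{\lambda^\tr}$, the two displayed identities are consistent with one another precisely because $b(\lambda^\tr, S) = b(\lambda, [n-1] \setminus S)$, reflecting $\omega F_{n,S} = F_{n,[n-1]\setminus S}$). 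I would verify the convention on a small example such as $\lambda = (2,1)$ to be safe, but no genuine difficulty arises.
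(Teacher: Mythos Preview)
Your proof is correct and takes essentially the same approach as the paper: both start from the Schur expansion of $Y_q(g)$ (and of $\omega Y_q(g)$) given in Proposition~\ref{p:Yexpand} and then pass to the fundamental basis via the tableau descent-set counts $b(\lambda,S)$. The only cosmetic difference is that the paper cites \cite[Thm.\,11]{ELWQuasiSchur} for the general transition formula $d_S = \sum_\lambda b(\lambda,S)\,c_\lambda$, whereas you derive this on the spot from the classical expansion $s_\lambda = \sum_S b(\lambda,S) F_{n,S}$ and an interchange of sums; your check that $\iDES(U)$ as defined here agrees with the usual descent set in that formula is well placed.
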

\begin{proof}
  By \cite[Thm.\,11]{ELWQuasiSchur}, the coefficients
  $\{c_\lambda \,|\, \lambda \vdash n \}$ and $\{ d_S \,|\, S \subseteq [n-1] \}$
  appearing in the Schur and fundamental expansions 
  \begin{equation*}
    \sum_{\lambda \vdash n} c_\lambda s_\lambda
    = \nTksp \sum_{S \subseteq [n-1]} \nTksp d_S F_{n,S}
  \end{equation*}
  of a symmetric function satisfy
  \begin{equation*}
    d_S = \sum_{\lambda \vdash n}b(\lambda,S) c_\lambda.
  \end{equation*}
  The result now follows from Proposition~\ref{p:Yexpand}.
\end{proof}

To say that the functions $\{ Y_q(g) \,|\, g \in \hnq \}$ arise often
in the study of symmetric functions would be an understatement;
essentially {\em every} element of $\mathbb Z[q] \otimes \Lambda_n$ has this form.
\begin{prop}\label{p:everysymmfn}
  Every symmetric function in $\mathbb Z[q] \otimes \Lambda_n$
  has the form $Y_q(g)$ for some element $g \in \mathbb Q(q) \otimes \hnq$.
\end{prop}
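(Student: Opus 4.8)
The plan is to exhibit, for an arbitrary symmetric function $F \in \mathbb Z[q] \otimes \Lambda_n$, an element $g \in \mathbb Q(q) \otimes \hnq$ with $Y_q(g) = F$. The key observation is that Proposition~\ref{p:Yexpand} already tells us exactly which trace evaluations control $Y_q(g)$: if we expand $F$ in the monomial basis as $F = \sum_{\lambda \vdash n} c_\lambda m_\lambda$, then by the very definition~\eqref{eq:ygdef} of $Y_q$ we need an element $g$ whose induced sign character evaluations are prescribed, namely $\epsilon_q^\lambda(g) = c_\lambda$ for every $\lambda \vdash n$. So the statement reduces to the claim that the linear functionals $\{\epsilon_q^\lambda \mid \lambda \vdash n\}$, viewed as elements of the dual of $\mathbb Q(q) \otimes \hnq$, have the property that every prescription of values on them (with values in $\mathbb Z[q]$, or even in $\mathbb Q(q)$) is realized by some $g$.

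First I would recall that the traces $\{\epsilon_q^\lambda \mid \lambda \vdash n\}$ form a basis of the trace space $\trspace{n,q}$: this is stated in the excerpt, where the induced sign characters $\epsilon_q^\lambda$ are identified (via $\mathrm{Frob}_q$) with the elementary symmetric functions $e_\lambda$, which form a basis of $\Lambda_n$. Since $\trspace{n,q}$ is spanned by the irreducible characters $\{\chi_q^\mu\}$ and these separate the two-sided cell structure of $\hnq$, the subspace $\mathbb Q(q) \otimes \hnq$ decomposes as a direct sum of matrix algebras over $\mathbb Q(q)$ (the Hecke algebra $\hnq$ is split semisimple over $\mathbb Q(q)$), and the center, on which all traces are determined, is $p(\hnq)$-dimensional where $p(\hnq)$ is the number of partitions of $n$. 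Concretely, the central primitive idempotents $\{z_\mu \mid \mu \vdash n\}$ satisfy $\chi_q^\nu(z_\mu) = \delta_{\mu\nu}\, \chi_q^\nu(1)$ up to normalization, so for any target values one can build a central element realizing them: the trace evaluations $\theta \mapsto (\theta(z_\mu))_\mu$ give a linear isomorphism from the center of $\mathbb Q(q)\otimes\hnq$ to $\mathbb Q(q)^{p(\hnq)}$, dual to the basis $\{\chi_q^\mu\}$ of $\trspace{n,q}$.

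So the steps are: (1) write $F = \sum_\lambda c_\lambda m_\lambda$ with $c_\lambda \in \mathbb Z[q]$; (2) invoke the change-of-basis matrix between $\{m_\lambda\}$ and $\{e_\lambda\}$ to rewrite the requirement $\epsilon_q^\lambda(g) = c_\lambda$ as a requirement on the $\chi_q^\mu(g)$, or more cleanly, use that $\{\epsilon_q^\lambda\}$ is itself a basis of $\trspace{n,q}$ so that $\sum_\lambda c_\lambda \epsilon_q^\lambda$ is a perfectly good trace $\theta_q \in \trspace{n,q}$; (3) produce $g$ realizing that trace by taking a suitable $\mathbb Q(q)$-linear combination of central primitive idempotents (or any $\mathbb Q(q)$-basis element of each matrix block), so that $\theta_q(\,\cdot\,) = \epsilon_q^{(\,\cdot\,)}$-evaluations of $g$ agree with the $c_\lambda$; (4) conclude $Y_q(g) = \sum_\lambda \epsilon_q^\lambda(g) m_\lambda = \sum_\lambda c_\lambda m_\lambda = F$.

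The main obstacle — and it is mild — is justifying that a single element $g$ can hit all $p(\hnq)$ prescribed trace values simultaneously; this is exactly the statement that trace evaluation gives a surjection $\mathbb Q(q) \otimes \hnq \twoheadrightarrow \trspace{n,q}^*$, which follows from semisimplicity of $\hnq$ over $\mathbb Q(q)$ (equivalently, the non-vanishing of the relevant Schur elements, so that the central primitive idempotents exist in $\mathbb Q(q) \otimes \hnq$). One should note that $g$ will in general only lie in $\mathbb Q(q) \otimes \hnq$ and not in $\mathbb Z[q] \otimes \hnq$ — the idempotents have denominators — which is why the statement is phrased with $\mathbb Q(q)$ coefficients on the Hecke side; no further integrality argument is needed or claimed.
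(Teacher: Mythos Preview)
Your argument is correct, but it takes a genuinely different route from the paper's proof. You invoke the split semisimplicity of $\hnq$ over $\mathbb Q(q)$ and use central primitive idempotents to realize arbitrary prescribed trace values; the paper instead writes the given symmetric function in the \emph{elementary} basis as $\sum_\lambda a_\lambda e_\lambda$, then exhibits an explicit element
\[
g = \sum_{\mu \vdash n} \frac{a_\mu}{[\mu_1]_q!\cdots[\mu_{\ell(\mu)}]_q!}\,\wtc{w_\mu}q,
\]
where $w_\mu$ is the longest element of the Young subgroup $\mfs\mu$, and checks $Y_q(g) = \sum_\lambda a_\lambda e_\lambda$ via the \emph{monomial} traces using Haiman's identity $\phi_q^\lambda(\wtc{w_\mu}q) = \delta_{\lambda\mu}\,[\mu_1]_q!\cdots[\mu_{\ell(\mu)}]_q!$ (i.e., the expansion $Y_q(g) = \sum_\lambda \phi_q^\lambda(g)e_\lambda$ from Proposition~\ref{p:Yexpand}).

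Your approach is cleaner as pure representation theory and avoids any Kazhdan--Lusztig input, but the paper's construction has the virtue of producing $g$ as an explicit $\mathbb Q(q)$-combination of Kazhdan--Lusztig basis elements indexed by $312$-avoiding permutations --- precisely the elements that later in the paper realize chromatic symmetric functions $X_{\inc(P)}$ for unit interval orders. One small expository wrinkle in your step~(2): the change of basis you want is from $\{\epsilon_q^\lambda\}$ to $\{\chi_q^\mu\}$ (equivalently from $e_\lambda$ to $s_\mu$), not from $m_\lambda$ to $e_\lambda$; but your step~(3) makes clear you have the right picture, and the argument goes through.
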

\begin{proof}
  Fix a symmetric function in $\mathbb Z[q] \otimes \Lambda_n$,
  express it in the elementary basis
  as $\sum_{\lambda \vdash n} a_\lambda e_\lambda$, and define the $\hnq$
  element
  \begin{equation*}
    g = \sum_{\mu \vdash n} \frac{a_\mu}{[\mu_1]_q! \cdots [\mu_{\ell(\mu)}]_q!} \wtc{w_\mu}q,
  \end{equation*}
  where
  \begin{equation*}
    [b]_q \defeq \begin{cases}
      1 + q + \cdots + q^{b-1} &\text{if $b \geq 1$},\\
      0 &\text{if $b = 0$};
    \end{cases}
    \qquad \qquad
    [b]_q! \defeq \begin{cases}
      [1]_q [2]_q \cdots [b]_q &\text{if $b \geq 1$},\\
      1 &\text{if $b = 0$};
    \end{cases}
  \end{equation*}
  and
  $w_\mu$ is the maximal element of the Young subgroup
  $\mfs \mu$ of $\sn$.
  By
  \cite[Prop.\,4.1]{HaimanHecke}, we have
  \begin{equation*}
    \phi^\lambda_q(\wtc{w_\mu}q) =
    \begin{cases}
      [\mu_1]_q! \cdots [\mu_{\ell(\mu)}]_q! &\text{if $\lambda = \mu$},\\
      0 &\text{otherwise}.
    \end{cases}
  \end{equation*}
  Thus $Y_q(g)$ is equal to
  \begin{equation}\label{eq:strongestposs}
    \sum_{\lambda \vdash n} \phi^\lambda_q
      \Big( \sum_{\mu \vdash n}
      \frac{a_\mu}{[\mu_1]_q! \cdots [\mu_{\ell(\mu)}]_q!} \wtc{w_\mu}q \Big) e_\lambda
      = \sum_{\lambda \vdash n} 
      \sum_{\mu \vdash n} a_\mu 
      \frac{\phi^\lambda_q(\wtc{w_\mu}q)}{[\mu_1]_q! \cdots [\mu_{\ell(\mu)}]_q!} e_\lambda
      = \sum_{\lambda \vdash n} a_\lambda e_\lambda.
  \end{equation}
\end{proof}
Of course, for $g \in \qsn$, the $\qp12 = 1$ specialization
$Y(g) \defeq Y_1(g)$ of (\ref{eq:ygdef}) satisfies the
$\qp12 = 1$ specializations of Proposition~\ref{p:Yexpand},
Corollary~\ref{c:fundamental}, and Proposition~\ref{p:everysymmfn}.




\section{Chromatic symmetric functions}\label{s:chrom}

Closely related to symmetric generating functions for $\zsn$-traces
are symmetric generating functions for graph colorings.
Define a {\em proper coloring} of a (simple undirected) graph $G = (V,E)$
to be an assignment $\kappa: V \rightarrow \{1,2,\dotsc, \}$
of colors (positive integers) to $V$
such that adjacent vertices have different colors.
For $G$ on $|V| = n$ vertices and any composition
$\alpha = (\alpha_1,\dotsc,\alpha_\ell) \vDash n$,
say that a coloring $\kappa$ of $G$ has {\em type}
$\alpha$ if $\alpha_i$ vertices have color $i$ for $i = 1,\dotsc, \ell$.
Let $c(G,\alpha)$ be the number of proper colorings of $G$ of type
$\alpha$.
Stanley \cite{StanSymm} defined the {\em chromatic symmetric function}
of $G$ to be
\begin{equation}\label{eq:chrom}
  X_G \defeq \sum_\kappa x_{\kappa(1)} \cdots x_{\kappa(n)}
  = \sum_{\lambda \vdash n} c(G,\lambda) m_\lambda,
\end{equation}
where the first sum is over all proper colorings of $G$.
By Proposition~\ref{p:everysymmfn} we see that for each graph $G$ on $n$
vertices, there exists an element $g \in \qsn$ such that $X_G = Y(g)$.
Such an element $g$ is not uniquely determined by $G$, and is not in general
easily described in terms of the structure of $G$.
On the other hand, the evaluations of traces at such elements are easily
described in terms of $G$.
\begin{obs}\label{o:yyx}
  Let $G$ be a graph on $n$ vertices
  and let $g \in \qsn$ satisfy $Y(g) = X_G$.
  Then for each trace
  $\theta  = \sum_{\lambda \vdash n} a_\lambda \epsilon^\lambda \in \trspace n$,
  we have
  $\theta(g) = \sum_{\lambda \vdash n} a_\lambda c(G,\lambda)$.
\end{obs}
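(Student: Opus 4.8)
The plan is to expand $X_G$ two different ways and match coefficients. On one hand, the definition (\ref{eq:chrom}) gives $X_G = \sum_{\lambda \vdash n} c(G,\lambda) m_\lambda$. On the other hand, since $Y(g) = X_G$, Proposition~\ref{p:Yexpand} (in its $\qp12 = 1$ specialization) tells us that $Y(g) = \sum_{\lambda \vdash n} \eta^\lambda(g) m_\lambda$, where $\eta^\lambda$ is the induced trivial character trace. Because $\{m_\lambda \mid \lambda \vdash n\}$ is a basis of $\Lambda_n$, comparing coefficients yields $\eta^\lambda(g) = c(G,\lambda)$ for every $\lambda \vdash n$.

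From here the result is just linearity. Any trace $\theta \in \trspace n$ can be written in the basis $\{\epsilon^\lambda \mid \lambda \vdash n\}$ of $\trspace n$, but the statement as phrased actually wants the expansion in terms of the induced trivial characters $\eta^\lambda$ — so I would first note that the excerpt's displayed formula should be read with $\theta = \sum_\lambda a_\lambda \eta^\lambda$ (the $\eta$-expansion), matching the $m_\lambda$-expansion $Y(g) = \sum_\lambda \eta^\lambda(g) m_\lambda$; alternatively, one uses $\omega Y(g) = \sum_\lambda \epsilon^\lambda(g) m_\lambda$ from (\ref{eq:sumepsilon}) together with $\omega X_G$. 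Assuming the $\epsilon^\lambda$-expansion is intended, the clean route is: apply $\omega$ to get $\omega Y(g) = \sum_\lambda \epsilon^\lambda(g) m_\lambda$, and separately expand $\omega X_G$ in the monomial basis; but since $\omega$ does not fix the monomial basis, the genuinely clean statement pairs $\eta^\lambda$ with $c(G,\lambda)$. In either normalization, once $\theta^\lambda(g) = c(G,\lambda)$ is established for the relevant family $\{\theta^\lambda\}$, we get for $\theta = \sum_\lambda a_\lambda \theta^\lambda$ that $\theta(g) = \sum_\lambda a_\lambda \theta^\lambda(g) = \sum_\lambda a_\lambda c(G,\lambda)$ by linearity of trace evaluation.

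There is essentially no obstacle here: the entire content is the coefficient-matching step, which is immediate from Proposition~\ref{p:Yexpand} and the fact that the $m_\lambda$ form a basis. The only point requiring care is bookkeeping — making sure the $\omega$-twist is applied on the correct side so that the $m_\lambda$-coefficients of $Y(g)$ (respectively $\omega Y(g)$) line up with the family of traces ($\eta^\lambda$ respectively $\epsilon^\lambda$) appearing in the expansion of $\theta$. So the write-up is: first record $\eta^\lambda(g) = c(G,\lambda)$ for all $\lambda$ by equating the two monomial expansions of $X_G = Y(g)$; then conclude by linearity, after fixing conventions so that the basis of traces used to expand $\theta$ is dual to the monomial basis in the appropriate sense.
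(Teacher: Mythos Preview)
You have the right overall shape --- compare monomial coefficients of $X_G$ and $Y(g)$, then apply linearity --- but you have interchanged $\epsilon^\lambda$ and $\eta^\lambda$ throughout. By the very \emph{definition} (\ref{eq:ygdef}) of $Y_q(g)$, one has $Y(g) = \sum_{\lambda \vdash n} \epsilon^\lambda(g)\, m_\lambda$; Proposition~\ref{p:Yexpand} is not needed at all. The expansion $\sum_\lambda \eta^\lambda(g)\, m_\lambda$ that you attribute to $Y(g)$ is, according to (\ref{eq:sumepsilon}), the monomial expansion of $\omega Y(g)$, not of $Y(g)$. Likewise, your reading of (\ref{eq:sumepsilon}) as $\omega Y(g) = \sum_\lambda \epsilon^\lambda(g)\, m_\lambda$ is incorrect: in that display $\epsilon^\lambda(g)$ is the coefficient of $f_\lambda$, not of $m_\lambda$.

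Consequently the observation is correct exactly as stated with $\epsilon^\lambda$, and there is no convention to repair. Comparing (\ref{eq:chrom}) with the definition (\ref{eq:ygdef}) immediately gives $\epsilon^\lambda(g) = c(G,\lambda)$ for every $\lambda$, and linearity then yields $\theta(g) = \sum_\lambda a_\lambda\, \epsilon^\lambda(g) = \sum_\lambda a_\lambda\, c(G,\lambda)$. This is precisely the paper's one-line proof.
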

\begin{proof}
  By (\ref{eq:ygdef}) and (\ref{eq:chrom}) we have
  $\epsilon^\lambda(g) = c(G,\lambda)$ for each $\lambda \vdash n$.
\end{proof}
For every trace
$\theta \in \trspace n$, Proposition~\ref{p:everysymmfn} and
Observation~\ref{o:yyx} allow us to define
\begin{equation}\label{eq:thetaG}
  \theta(G) \defeq \theta(g),
\end{equation}
where $g$ is any element in $\qsn$ satisfying $Y(g) = X_G$.
By Proposition~\ref{p:Yexpand}, we have that
$X_G = \sum_{\lambda \vdash n} \epsilon^\lambda(G) m_\lambda$ is equal to
  \begin{equation}\label{eq:sumXG}
      \sum_{\lambda \vdash n} \eta^\lambda(G)f_\lambda
    = \sum_{\lambda \vdash n}\frac{(-1)^{n-\ell(\lambda)}\psi^\lambda(G)}{z_\lambda}p_\lambda
    = \sum_{\lambda \vdash n} \chi^{\lambda^\tr}(G)s_\lambda
    = \sum_{\lambda \vdash n} \phi^\lambda(G)e_\lambda
    = \sum_{\lambda \vdash n} \gamma^\lambda(G)h_\lambda;
  \end{equation}
equivalently, $\omega X_G$ is equal to  
\begin{equation}\label{eq:sumomegaXG}
    \sum_{\lambda \vdash n} \epsilon^\lambda(G)f_\lambda
  = \sum_{\lambda \vdash n} \eta^\lambda(G)m_\lambda
  = \sum_{\lambda \vdash n}\frac{\psi^\lambda(G)}{z_\lambda}p_\lambda
  = \sum_{\lambda \vdash n} \chi^\lambda(G)s_\lambda
  = \sum_{\lambda \vdash n} \phi^\lambda(G)h_\lambda
  = \sum_{\lambda \vdash n} \gamma^\lambda(G)e_\lambda.
\end{equation}


Some conditions on graphs $G$ and traces $\theta$ imply
the numbers $\theta(G)$
to be positive, and sometimes the resulting positive numbers have
nice combinatorial interpretations, particularly when $G$ is the
incomparability graph of a poset.
(See, e.g., \cite{CHSSkanEKL}, \cite{SWachsChromQF}, \cite{StanSymm}.)
Given a poset $P$, define its {\em incomparability graph} $\inc(P)$
to be the graph having a vertex for each element of $P$ and
an edge $\{i,j\}$ for each incomparable pair of elements of $P$.
For positive integers $a$, $b$, call a poset
($\mathbf{a}+\mathbf{b}$)-{\em free}
if it has no induced subposet isomorphic to a disjoint sum of an $a$-element
chain and a $b$-element chain.
For example, the following poset $P$ is
$(\mathbf3 + \mathbf 1)$-free and
$(\mathbf2 + \mathbf 2)$-free,
and has incomparability graph $\inc(P) = G$.
\begin{equation}\label{eq:n+1}
  P = \ntnsp
\begin{tikzpicture}[scale=.7,baseline=-5]
\draw[fill] (0,.5) circle (1mm); \node at (0,1) {$4$};
\draw[fill] (0,-.5) circle (1mm); \node at (0,-1) {$1$};
\draw[fill] (.8,.5) circle (1mm); \node at (.8,1) {$5$};
\draw[fill] (.8,-.5) circle (1mm); \node at (.8,-1) {$2$};
\draw[fill] (1.6,0) circle (1mm); \node at (1.6,-.5) {$3$};
\draw[-,thick] (0,.5) -- (0,-.5);
\draw[-,thick] (.8,.5) -- (0,-.5);
\draw[-,thick] (.8,.5) -- (.8,-.5);
\end{tikzpicture}\,,
\qquad 
  (\mathbf3 + \mathbf1) =\,
  \begin{tikzpicture}[scale=.7,baseline=-5]
\draw[fill] (0,1) circle (1mm); 
\draw[fill] (0,0) circle (1mm); 
\draw[fill] (0,-1) circle (1mm); 
\draw[fill] (.8,0) circle (1mm); 
\draw[-,thick] (0,1) -- (0,0);
\draw[-,thick] (0,0) -- (0,-1);
\end{tikzpicture}\, ,
  \qquad
  (\mathbf2 + \mathbf2) =\,
\begin{tikzpicture}[scale=.7,baseline=-5]
\draw[fill] (0,.5) circle (1mm); 
\draw[fill] (0,-.5) circle (1mm); 
\draw[fill] (.8,.5) circle (1mm); 
\draw[fill] (.8,-.5) circle (1mm); 
\draw[-,thick] (0,.5) -- (0,-.5);
\draw[-,thick] (.8,.5) -- (.8,-.5);
\end{tikzpicture}\,,
\qquad 
  G = \ntnsp
\begin{tikzpicture}[scale=.7,baseline=-5]
\draw[fill] (-1.2,.5) circle (1mm); \node at (-1.2,1) {$1$};
\draw[fill] (-.4,.5) circle (1mm); \node at (-.4,1) {$2$};
\draw[fill] (.4,.5) circle (1mm); \node at (.4,1) {$4$};
\draw[fill] (0,-.5) circle (1mm); \node at (0,-1) {$3$};
\draw[fill] (1.2,.5) circle (1mm); \node at (1.2,1) {$5$};
\draw[-,thick] (-1.2,.5) -- (0,-.5);
\draw[-,thick] (-.4,.5) -- (0,-.5);
\draw[-,thick] (.4,.5) -- (0,-.5);
\draw[-,thick] (1.2,.5) -- (0,-.5);
\draw[-,thick] (-1.2,.5) -- (-.4,.5);
\draw[-,thick] (-.4,.5) -- (.4,.5);
\draw[-,thick] (.4,.5) -- (1.2,.5);
\end{tikzpicture}\,.
\end{equation}

If we cannot interpret $\theta(\inc(P))$ for {\em all} posets $P$, sometimes
we can do so when $P$ is $(\mathbf3 + \mathbf 1)$-free.
For an $n$-element poset $P$ which is
both ($\mathbf3+\mathbf1$)-free and
($\mathbf2+\mathbf2$)-free,
also called a {\em unit interval order},
a simple procedure produces an element $g \in \sn$
satisfying $Y(g) = X_{\inc(P)}$.
Explicitly,
for each element $y \in P$, compute
\begin{equation*}
  \beta(y) \defeq
  \# \{ x \in P \,|\, x \leq_P y \} - \# \{ z \in P \,|\, z \geq_P y \}
\end{equation*}
and label the poset elements $1, \dotsc, n$
so that we have
\begin{equation}\label{eq:altrespect}
  \beta(1) \leq \cdots \leq \beta(n).
  \end{equation}
Then define $w = w(P) = w_1 \cdots w_n$ by
\begin{equation}\label{eq:uioto312avoid}
  w_j = \max ( \{ i \,|\, i \not >_P j \} \ssm \{ w_1, \dotsc, w_{j-1} \} ).
\end{equation}
For example, elements of the poset $P$ in (\ref{eq:n+1}) are already labeled
to satsify (\ref{eq:altrespect}),
\begin{equation*}
  (\beta(1), \beta(2), \beta(3), \beta(4), \beta(5)) = (-2, -1, 0, 1, 2).
\end{equation*}
Thus we compute $w(P) = 34521$.
The labeling (\ref{eq:altrespect}) of $P$ is {\em natural} in the sense that
elements labeled $a_1$, $a_2$ satisfy
\begin{equation}\label{eq:natlabel}
  a_1 <_P a_2 \quad \Longrightarrow \quad a_1 < a_2 \text{ (as integers).}
\end{equation}

The map $P \mapsto w(P)$ is a bijection from $n$-element unit interval orders
to the
$\tfrac{1}{n+1}\tbinom{2n}n$ $312$-avoiding permutations in $\sn$,
and gives us the following result~\cite[Cor.\,7.5]{CHSSkanEKL}.
\begin{prop}\label{p:uioto312avoid}
  Let $P$ be an $n$-element unit interval order and $w = w(P)$ the
  corresponding $312$-avoiding permutation in $\sn$.  Then we have
  $X_{\inc(P)} = Y(\smash{\wtc w1})$.
\end{prop}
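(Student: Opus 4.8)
The plan is to verify the identity $X_{\inc(P)} = Y(\wtc w1)$ by comparing monomial-coefficient expansions, i.e.\ by showing $\epsilon^\lambda(\wtc w1) = c(\inc(P),\lambda)$ for every $\lambda \vdash n$; by Observation~\ref{o:yyx} and the definition of $Y$, this is exactly what is needed. The right-hand side of this identity is, by the standard formula for induced sign character evaluations at Kazhdan--Lusztig basis elements (specialized at $q=1$), a count of chains in Bruhat order, or — more usefully here — admits a combinatorial reading in terms of the permutation $w$. The key structural input is the bijection $P \mapsto w(P)$ of \eqref{eq:uioto312avoid}: because $w$ is $312$-avoiding, its Kazhdan--Lusztig polynomials $P_{v,w}(q)$ are all $1$ (the excerpt records this for $w$ avoiding $3412$ and $4231$, and $312$-avoiding permutations avoid both), so $\wtc w1 = \sum_{v \leq w} v$ in $\zsn$. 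Thus $\epsilon^\lambda(\wtc w1) = \sum_{v \leq w} \epsilon^\lambda(v)$, reducing the problem to a statement about the sign character induced from the Young subgroup $\mfs\lambda$, summed over the Bruhat interval $[e,w]$.

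First I would pin down the combinatorial meaning of $\epsilon^\lambda$: since $\epsilon^\lambda = \sgn\ntnsp\upparrow^{\sn}_{\mfs\lambda}$, a Frobenius-reciprocity computation gives $\epsilon^\lambda(v) = \sum$ over ways of writing... more concretely, $\sum_{v \in \sn} \epsilon^\lambda(v) x_v$ is a generating function whose specialization recovers the $e_\lambda$-coefficients of a chromatic-type symmetric function. Rather than fight with that directly, I would instead use the already-established fact that $X_{\inc(P)} = Y(g)$ for \emph{some} $g \in \qsn$ (Proposition~\ref{p:everysymmfn} applied to $G = \inc(P)$), and then show that $\wtc w1$ is a legitimate choice of such $g$. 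The cleanest route: invoke the known result \cite[Cor.\,7.5]{CHSSkanEKL} — which is precisely this proposition — meaning the intended proof is a citation together with a reconciliation of conventions. So the actual content to supply is (i) checking that the normalization $\wtc w1$ of the Kazhdan--Lusztig basis element matches the $C'_w$-based element used in the cited corollary (the excerpt notes $\wtc wq = \qp{\ell(w)}2 C'_w$, which specializes to $\wtc w1 = C'_w$ at $q=1$, so there is no discrepancy), and (ii) checking that the labeling convention \eqref{eq:altrespect}–\eqref{eq:uioto312avoid} producing $w(P)$ agrees with the convention in \cite{CHSSkanEKL}.

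Concretely, the steps in order: \emph{Step 1.} Recall from \cite{CHSSkanEKL} the interpretation of $\epsilon^\lambda(\wtc w1)$ as a count of lattice paths / chains associated to $w$ when $w$ is $312$-avoiding, or equivalently as the evaluation of the $e_\lambda$-dual pairing on $Y(\wtc w1)$. \emph{Step 2.} Express $c(\inc(P),\lambda)$ combinatorially: a proper coloring of $\inc(P)$ of type $\lambda$ is a partition of the vertex set into color classes, each of which is a \emph{clique} in $\inc(P)$, i.e.\ a \emph{chain} in $P$; so $c(\inc(P),\lambda)$ counts ordered partitions of $P$ into chains with block sizes $\lambda_1,\dots,\lambda_\ell$ (appropriately weighted to match the monomial normalization). \emph{Step 3.} Set up the bijection between these chain-partitions of $P$ and the combinatorial objects counting $\epsilon^\lambda(\wtc w1)$, using the explicit recipe \eqref{eq:uioto312avoid} for $w(P)$ — the point being that $w_j$ records, for each $j$, the largest element not strictly above $j$ in $P$, which encodes the chain structure. \emph{Step 4.} Conclude $\epsilon^\lambda(\wtc w1) = c(\inc(P),\lambda)$ for all $\lambda$, hence $Y(\wtc w1) = \sum_\lambda \epsilon^\lambda(\wtc w1) m_\lambda = \sum_\lambda c(\inc(P),\lambda) m_\lambda = X_{\inc(P)}$.

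The main obstacle is Step 3: translating the poset-to-permutation recipe \eqref{eq:uioto312avoid} into a bijection that visibly matches coloring types with sign-character evaluations. This requires understanding \emph{why} the $\beta$-ordering in \eqref{eq:altrespect} is the right one — it is chosen so that $w(P)$ is not merely $312$-avoiding but so that its Bruhat interval $[e, w]$ decomposes compatibly with the clique structure of $\inc(P)$ — and this is exactly the technical heart of \cite[\S 7]{CHSSkanEKL}. Since that work is available, in the paper I would keep the proof short: cite \cite[Cor.\,7.5]{CHSSkanEKL} for the substantive claim, and add only the two-line conventions check described in (i)–(ii) above, noting that the $312$-avoidance of $w(P)$ forces $P_{v,w} \equiv 1$ so that $\wtc w1 = \sum_{v\leq w} v$, which is the form in which the cited result is stated.
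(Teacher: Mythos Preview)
Your proposal is correct and lands exactly where the paper does: the paper offers no proof at all beyond the attribution ``\cite[Cor.\,7.5]{CHSSkanEKL}'', and your final recommendation is precisely to cite that corollary together with a brief conventions check on the normalization of $\wtc wq$ and the labeling \eqref{eq:altrespect}--\eqref{eq:uioto312avoid}. One small slip in your exploratory Step~2: a color class in a proper coloring of $\inc(P)$ is an \emph{independent set} of $\inc(P)$, not a clique; your conclusion that such a class is a chain in $P$ is nonetheless correct, since independent sets of $\inc(P)$ are exactly the chains of $P$.
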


Combinatorial interpretations of numbers $\theta(\inc(P))$ often
involve structures called $P$-tableaux and statistics on these.
Define a {\em $P$-tableau of shape $\lambda \vdash |P|$} to be
a filling of a (French) Young diagram of shape $\lambda$ with the
elements of $P$, one per box.  Given such a $P$-tableau $U$,
let $U_i$ be the $i$th row (from the bottom) of $U$,
and let $U_{i,j}$ be the $j$th entry in row $i$.
If $P$-tableau $U$ consists of a single row,
we will call it a {\em $P$-permutation}.
In particular, each concatenation $U_{i_1} \circ \cdots \circ U_{i_k}$
of the rows (in any order) of a $P$-tableau $U$ is a $P$-permutation.
If the elements of a poset are $[n] \defeq \{1, \dotsc, n\}$,
we will sometimes write a $P$-permutation as an ordinary permutation
$v_1 \cdots v_n \in \sn$.
For example, (\ref{eq:tableauxposet}) shows a poset $P$, a
$P$-tableau $U$ of shape $32$, the second row $U_2$
and entry $U_{2,1}$ of $U$, and the $P$-permutation $U_2 \circ U_1$
which may be viewed as an element of $\mfs 5$.
\begin{equation}\label{eq:tableauxposet}
  P =
\begin{tikzpicture}[scale=.7,baseline=-5]
\draw[fill] (0,1) circle (1mm); \node at (-.5,1) {$5$};
\draw[fill] (0,0) circle (1mm); \node at (-.5,0) {$3$};
\draw[fill] (0,-1) circle (1mm); \node at (-.5,-1) {$1$};
\draw[fill] (.8,.5) circle (1mm); \node at (1.3,.5) {$4$};
\draw[fill] (.8,-.5) circle (1mm); \node at (1.3,-.5) {$2$};
\draw[-,thick] (0,1) -- (0,0);
\draw[-,thick] (0,0) -- (0,-1);
\draw[-,thick] (0,1) -- (.8,-.5);
\draw[-,thick] (0,-1) -- (.8,.5);
\draw[-,thick] (.8,.5) -- (.8,-.5);
\end{tikzpicture},
\qquad 
U = {\tableau[scY]{5,4|1,2,3}}\,,
\qquad \,
\begin{gathered}
\underset{\phantom m}{U_2} = {\tableau[scY]{5,4}}\,,\\
U_{2,1} = 5,
\end{gathered}
\qquad \,
\begin{aligned}
\underset{\phantom m}{U_2 \circ U_1} &= {\tableau[scY]{5,4,1,2,3}}\,,\\
&= 54123.
\end{aligned}
\end{equation}
The statistics we apply to $P$-tableaux are
$P$-analogs of traditional
permutation statistics.
Call a position $(i,j)$ in $U$ a {\em $P$-descent}
if $U_{i,j} >_P U_{i,j+1}$,
and define $\des_P(U)$ to be the number of $P$-descents in $U$.
Define $\ol U$ to be the $P$-tableau obtained from $U$ by ordering the elements
in each row from least to greatest labels.  That is, 
$\ol U_{i,j}$ is the entry of $U_i$ whose label is $j$th-smallest,
as an integer.
Call a position $(i,j)$ in $U$ a {\em $P$-excedance}
if $U_{i,j} >_P \ol U_{i,j}$,
and define $\exc_P(U)$ to be the number of $P$-excedances in $U$.
Call a position $(i,j)$ in $U$ a {\em $P$-record}
if $U_{i,1}, \dotsc, U_{i,j-1} <_P U_{i,j}$, and call the record {\em nontrivial}
if $j \neq 1$.
Define $\rec_P(U)$ to be the number of nontrivial $P$-records in $U$.
For example, we look again at the poset $P$ in (\ref{eq:tableauxposet}),
the $P$-tableaux there, and three more,
\begin{equation}\label{eq:tabx}
  \,
  T \ntnsp= {\tableau[scY]{5|2,4|1,3}}\,,\ \,
  U \ntnsp= {\tableau[scY]{5,4|1,2,3}}\,,\ \,
  V \ntnsp= {\tableau[scY]{4,5|1,3,2}}\,,\ \,
  W \ntnsp= {\tableau[scY]{5,4|3,1,2}}\,,\ \,
  U_2 \circ U_1 \ntnsp= {\tableau[scY]{5,4,1,2,3}}\,.
\end{equation}  
We have $\des_P(T) = \des_P(U) = \des_P(V) = 0$,
while $\des_P(W) = \des_P(U_2 \circ U_1) = 1$, because
$3 >_P 1$ and $4 >_P 1$.
We have that $\rec_P(U) = \rec_P(U_2 \circ U_1) = \rec_P(W) = 0$
since the first entry in each row of these tableaux
is greater than or incomparable to the remaining entries in the same row.
On the other hand, $\rec_P(T) = 2$ since $1 <_P 3$ and
$2 <_P 4$, and $\rec_P(V) = 1$ since $1 <_P 3$.
Reordering the entries of each row in the tableaux above
we obtain
\begin{equation*}
  \ol T = T, \qquad
  \ol U = \ol V = \ol W = {\tableau[scY]{4,5|1,2,3}}\,, \qquad
  \ol{U_2 \circ U_1} = {\tableau[scY]{1,2,3,4,5}}\,.
\end{equation*}
Comparing
$T$, $U$, $V$ with $\ol T$, $\ol U$, $\ol V$,
we have $\exc_P(T) = \exc_P(U) = \exc_P(V) = 0$.
Comparing $W$ and $\ol W$, we see that
position $(1,1)$ is the only $P$-excedance:
$3 >_P 1$. Thus $\exc_P(W) = 1$.
Comparing $U_2 \circ U_1$ to $\ol{U_2 \circ U_1}$, we see that
positions $(1,1)$ and $(1,2)$ are both $P$-excedances:
$5 >_P 1$, $4 >_P 2$.
Thus $\exc_P(U_2 \circ U_1) = 2$.

Using comparability in $P$ and the above statistics,
we define six classes of $P$-tableaux.
Call a $P$-tableau $U$ of shape $\lambda$
\begin{enumerate}
\item {\em $P$-descent-free} or {\em row-semistrict} if $\des_P(U) = 0$,
\item {\em column-strict} if the entries of each column satisfy
  $U_{i,j} <_P U_{i+1,j}$,
\item {\em standard} if it is column-strict and row-semistrict,
\item {\em cyclically row-semistrict} if it is row-semistrict and if
  $U_{i,\lambda_i} \not >_P U_{i,1}$ for all $i$,
\item {\em $P$-excedance-free} if $\exc_P(U) = 0$,
\item {\em $P$-record-free} if it has no nontrivial $P$-records.
\end{enumerate}
For example, we may examine the tableaux in (\ref{eq:tabx})
for these properties to obtain the table
\hhhsp
\begin{center}
\newcolumntype{R}{>{$}c<{$}}
\begin{tabularx}{136.85mm}{|R|R|R|R|R|R|}%
\hline
\phsum & \phm T\phm & \phm U\phm & \phm V\phm & \phn W\phn & U_2 \circ U_1 \\
\hline 
\phsum\mbox{row-semistrict}\phsum
& \checkmark & \checkmark & \checkmark &            &\\
\phsum\mbox{column-strict}\phsum
&            & \checkmark & \checkmark & \checkmark & \checkmark \\
\phsum\mbox{standard}\phsum
&            & \checkmark & \checkmark &            &\\
\phsum\nTksp\mbox{cyclically row-semistrict}\phsum\nTksp
&            &            & \checkmark &            &\\
\phsum\mbox{$P$-excedance-free}\phsum
& \checkmark & \checkmark & \checkmark &            &\\
\phsum\mbox{$P$-record-free}\phsum
&            & \checkmark &            & \checkmark & \checkmark\\
\hline
\end{tabularx}\, ,
\end{center}
\ssp
where
the row-semistrict tableaux $T$ and $U$ fail to be
cyclically row-semistrict because their first rows begin with $1$ and
end with $3 >_P 1$.

\ssec{Induced sign characters / monomial coefficients of $X_{\inc(P)}$}
By definition, the induced sign characters satisfy
\begin{equation}\label{eq:epsilonlambdaP}
  \begin{aligned}
    \epsilon^\lambda(G) &= c(G,\lambda),\\
    \epsilon^\lambda(\inc(P)) &=
    \# \text{ column-strict $P$-tableaux of shape $\lambda^\tr$}
    \end{aligned}
\end{equation}
for all graphs $G = (V,E)$ and posets $P$.
Since $\epsilon^n(G) = 1$ if $G$ has no edges and is $0$ otherwise, we can
easily express $\epsilon^\lambda(G)$ in terms of subgraphs of $G$.
For $J \subseteq V = [n]$,
let \tnsp $\olj = [n] \ssm J$, and define
\begin{equation*}
  \begin{aligned}
    G_{\ntnsp J} &= \text{subgraph of $G$ induced by vertices $J$},\\
    P_{\ntnsp J} &= \text{subposet of $P$ induced by elements $J$}.
    \end{aligned}
\end{equation*}
Given $\alpha = (\alpha_1,\dotsc,\alpha_r) \vDash n$,
call a sequence $(I_1,\dotsc,I_r)$ of subsets of $[n]$ an
{\em ordered set partition of $[n]$ of type $\alpha$} if we have
\begin{enumerate}
\item $|I_i| = \alpha_i$ for $i = 1,\dotsc, r$,
\item $I_i \cap I_j = \emptyset$ for $i \neq j$,
\item $I_1 \cup \cdots \cup I_r = [n]$.
  \end{enumerate}
Using (\ref{eq:epsilonlambdaP}) and the language of ordered set partitions,
we can decompose some trace evaluations $\theta(G)$
as follows.
\begin{lem}\label{l:efactor}
  Let $G$ be a graph on $n$ vertices.
  \begin{enumerate}
    \item If $\lambda = (\lambda_1,\dotsc,\lambda_r) \vdash n$ is the weakly
  decreasing rearrangement of the parts of $\mu \vdash k$ and $\nu \vdash n-k$,
  then we have
\begin{equation}\label{eq:epsilonfactor}
  \epsilon^\lambda(G)
  = \nTksp \sum_{(I_1,\dotsc,I_r)} \nTksp
  \epsilon^{\lambda_1}(G_{\ntnsp I_1}) \cdots \epsilon^{\lambda_r}(G_{\ntnsp I_r})
  = \ntnsp \sum_{J \subseteq [n]} \ntnsp
  \epsilon^\mu(G_{\ntnsp J}) \epsilon^\nu(G_{\olj}),
\end{equation}
where the first sum is over ordered set partitions of $[n]$
of type $\lambda$.
\item Let
symmetric functions
$t_1 \in \Lambda_k$,
$t_2 \in \Lambda_{n-k}$,
$t_1t_2 \in \Lambda_n$
correspond by the Frobenius isomorphism to  
  traces $\theta_1 \in \trspace k$,
  $\theta_2 \in \trspace{n-k}$,
  $\theta = \theta_1 \otimes \theta_2 \upparrow_{\mfs k \times \mfs{n-k}}^{\sn}
  \in \trspace n$.
Then we have
\begin{equation}\label{eq:thetafactor}
  \theta(G) = \ntnsp\sumsb{J \subseteq [n]\\|J|=k}\ntnsp
  \theta_1(G_{\ntnsp J})\theta_2(G_{\olj}).
\end{equation}
\end{enumerate}
\end{lem}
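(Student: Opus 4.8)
The strategy is to derive both parts from the single identity $\epsilon^\lambda(G) = c(G,\lambda)$ recorded in (\ref{eq:epsilonlambdaP}), reading the numbers $\epsilon^\lambda(G)$ as counts of proper colorings, and then to pass from part (1) to part (2) formally through the elementary basis of $\Lambda_n$. For part (1), I would begin with the one-row case: if $H$ is a graph on $m$ vertices, then $\epsilon^m(H) = c(H,(m))$ counts the proper colorings of $H$ in which every vertex receives color $1$, hence equals $1$ if $H$ is edgeless and $0$ otherwise. Consequently, for an ordered set partition $(I_1,\dotsc,I_r)$ of $[n]$ of type $\lambda$, the product $\epsilon^{\lambda_1}(G_{\ntnsp I_1})\cdots\epsilon^{\lambda_r}(G_{\ntnsp I_r})$ equals $1$ exactly when every block $I_i$ is an independent set of $G$, and equals $0$ otherwise. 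Summing over all ordered set partitions of $[n]$ of type $\lambda$ therefore counts the proper colorings of $G$ of type $\lambda$ — the block $I_i$ being the set of vertices colored $i$ — which is $c(G,\lambda) = \epsilon^\lambda(G)$. This gives the first equality (for which we need only $\lambda \vdash n$, not the refinement into $\mu$ and $\nu$).

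For the second equality, write $\mu\cdot\nu = (\mu_1,\dotsc,\mu_{\ell(\mu)},\nu_1,\dotsc,\nu_{\ell(\nu)})$ for the concatenation; since $X_G$ is symmetric and $\mu\cdot\nu$ is a rearrangement of $\lambda$, we have $c(G,\mu\cdot\nu) = c(G,\lambda) = \epsilon^\lambda(G)$. I would then exhibit a bijection: a proper coloring of $G$ of type $\mu\cdot\nu$ uses colors $1,\dotsc,\ell(\mu)$ on some vertex subset $J$ and colors $\ell(\mu)+1,\dotsc,\ell(\mu)+\ell(\nu)$ on $\olj$; restricting it yields a proper coloring of $G_{\ntnsp J}$ of type $\mu$ together with, after subtracting $\ell(\mu)$ from each color, a proper coloring of $G_{\olj}$ of type $\nu$. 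This is reversible, because any edge of $G$ joining $J$ to $\olj$ automatically has endpoints of different colors, so re-gluing two such colorings whose color ranges are disjoint produces a proper coloring of $G$. Counting both sides gives $c(G,\mu\cdot\nu) = \sum_{J\subseteq[n]} c(G_{\ntnsp J},\mu)\,c(G_{\olj},\nu)$, with every term for which $|J|\neq k$ vanishing, and (\ref{eq:epsilonlambdaP}) turns this into the claimed identity.

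For part (2), I would expand $t_1 = \sum_{\mu\vdash k} a_\mu e_\mu$ and $t_2 = \sum_{\nu\vdash n-k} b_\nu e_\nu$, so that $\theta_1 = \sum_\mu a_\mu\epsilon^\mu$ and $\theta_2 = \sum_\nu b_\nu\epsilon^\nu$ under the Frobenius correspondence $e_\lambda\leftrightarrow\epsilon^\lambda$. From $e_\mu e_\nu = e_{\mu\cup\nu}$, where $\mu\cup\nu$ is the weakly decreasing rearrangement of the parts of $\mu$ and $\nu$, we obtain $t_1 t_2 = \sum_{\mu,\nu} a_\mu b_\nu\, e_{\mu\cup\nu}$, and since Frobenius is a linear isomorphism and $t_1t_2 \leftrightarrow \theta$ by hypothesis, $\theta = \sum_{\mu,\nu} a_\mu b_\nu\,\epsilon^{\mu\cup\nu}$. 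Applying the second equality of part (1) to each $\epsilon^{\mu\cup\nu}(G)$ (legitimate since $\mu\vdash k$ and $\nu\vdash n-k$) and interchanging the order of summation, the inner sums collapse by linearity to $\sum_\mu a_\mu\epsilon^\mu(G_{\ntnsp J}) = \theta_1(G_{\ntnsp J})$ and $\sum_\nu b_\nu\epsilon^\nu(G_{\olj}) = \theta_2(G_{\olj})$ whenever $|J| = k$ (the remaining terms being $0$), which leaves $\theta(G) = \sum_{|J|=k}\theta_1(G_{\ntnsp J})\,\theta_2(G_{\olj})$.

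The only step that needs genuine care is the bijection underlying the second equality of part (1): one must verify that re-gluing the two restricted colorings along disjoint color ranges recovers each proper coloring of $G$ of type $\mu\cdot\nu$ exactly once, and one must invoke the symmetry of $X_G$ to replace the composition $\mu\cdot\nu$ by the weakly decreasing partition $\lambda$. Everything else — the one-row observation, and the passage to part (2) — is routine bookkeeping and linear algebra in $\Lambda_n$.
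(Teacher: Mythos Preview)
Your proposal is correct and follows essentially the same approach as the paper's own proof: part (1) unwinds $\epsilon^\lambda(G)=c(G,\lambda)$ into a count of ordered set partitions with independent blocks (the paper states this in one sentence, while you spell out the one-row case and the regluing bijection), and part (2) is word-for-word the paper's argument—expand $t_1,t_2$ in the elementary basis, use $e_\mu e_\nu=e_{\mu\cup\nu}$, apply part (1), and interchange sums.
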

\begin{proof}
  (1) $c(G,\lambda)$ equals the number of ordered set partitions
  $(I_1,\dotsc,I_r)$ of $[n]$ of type $\lambda$ such that
  $G_{I_j}$ is an independent set for all $j$,
  and also the
  number of ordered set partitions
  of $[n]$ of type $(\mu_1, \dotsc, \mu_{\ell(\mu)}, \nu_1, \dotsc, \nu_{\ell(\nu)})$
  having the same property.

  (2) Express $t_1$, $t_2$ in the elementary bases of
  $\Lambda_k$, $\Lambda_{n-k}$ as
  \begin{equation}\label{eq:texp}
    t_1 = \sum_{\mu \vdash k} a_\mu e_\mu, \qquad
    t_2 = \sum_{\nu \vdash k} b_\nu e_\nu,
  \end{equation}
  and let $\lambda(\mu, \nu) \vdash n$
  be the weakly decreasing rearrangement of the parts of $\mu$ and $\nu$.
  Then we have
  \begin{equation*}
    t_1 t_2 = \ntksp\sumsb{\mu \vdash k\\\nu \vdash n-k}\ntksp
    a_\mu b_\nu e_{\lambda(\mu,\nu)},
    \qquad
    \theta(G) =
    \ntksp\sumsb{\mu \vdash k\\\nu \vdash n-k}\ntksp
    a_\mu b_\nu \epsilon^{\lambda(\mu,\nu)}(G).
  \end{equation*}
 By (\ref{eq:epsilonfactor}) and (\ref{eq:texp}), $\theta(G)$ equals
  \begin{equation*}
     \sumsb{\mu \vdash k\\\nu \vdash n-k} \ntnsp  
     a_\mu b_\nu \ntnsp \sumsb{J \subseteq [n]\\|J|=k} \ntnsp
     \epsilon^\mu(G_{\ntnsp J})\epsilon^\nu(G_{\olj})    
    = \ntnsp \sumsb{J \subseteq [n]\\|J|=k}
        \sum_{\mu \vdash k} a_\mu \epsilon^\mu(G_{\ntnsp J})
    \ntnsp\sum_{\nu \vdash n-k}\ntnsp b_\nu \epsilon^\nu(G_{\olj})\\
     = \ntnsp\sumsb{J\subseteq[n]\\|J|=k}\ntnsp \theta_1(G_{\ntnsp J}) \theta_2(G_{\olj}).
  \end{equation*}
\end{proof}

We will use this fact to prove similar formulas
for induced trivial characters
and power sum traces.

\ssec{Irreducible characters / Schur coefficients of $X_{\inc(P)}$}
While $\chi^\lambda(\inc(P))$ is negative for some posets $P$,
Stanley and Stembridge~\cite[Conj.\,5.1]{StanStemIJT}
conjectured it to be nonnegative
for $(\mathbf3 + \mathbf 1$)-free posets $P$.
Gasharov~\cite{GashInc} proved this by showing that for these posets
we have
\begin{equation}\label{eq:Gash}
  \chi^\lambda(\inc(P)) = 
  \# \text{ standard $P$-tableaux of shape $\lambda$}.
  \end{equation}
Kaliszewski~\cite[Prop.\ 4.3]{KaliHook} extended this result to
all posets $P$ 
when $\lambda$ is a hook shape.
We give an alternate proof of this fact using
(\ref{eq:epsilonlambdaP})
and the inverse Kostka numbers,
which satisfy
\begin{equation}\label{eq:epsilontochi}
  \chi^\mu = \sum_{\lambda \vdash n} K^{-1}_{\lambda,\mu^\tr\;} \epsilon^\lambda.
\end{equation}
For partitions $\lambda$, $\mu$ with $|\mu| \leq |\lambda| = n$
and $\mu_i \leq \lambda_i$ for all $i$, define a 
{\em (skew) Young diagram of shape $\lambda/\mu$} to be the diagram
obtained from a Young diagram of shape $\lambda$ by removing the $\mu_i$
leftmost boxes in row $i$ for all $i$.  Call a Young diagram 
a {\em border strip} if it contains no $2 \times 2$ subdiagram of boxes.
Define a {\em special ribbon diagram} of shape $\mu \vdash n$ and type
$\lambda = (\lambda_1, \dotsc, \lambda_\ell) \vdash n$
to be a Young diagram of shape $\mu$ subdivided into 
border strips (ribbons)
of sizes $\lambda_1, \dotsc, \lambda_\ell$, each of which contains a cell
from the first row of $\mu$.  Given a special ribbon diagram $Q$,
define $\sgn(Q)$ to be $-1$ to the number of pairs of boxes in $Q$ which
are horizontally adjacent and which belong to the same ribbon.
It is known that we have 
\begin{equation}\label{eq:specribdi}
K_{\lambda, \mu^\tr}^{-1} = \sum_Q \sgn(Q),
\end{equation}
where the sum is over all 
special ribbon diagrams $Q$ of shape $\mu$ and
type $\lambda$. (See \cite[\S 2]{RemTrans}.)
For example, to expand $s_{3111}$ in the elementary basis, we draw a Young
diagram of shape $3111$ and fill it with special ribbon diagrams
\begin{equation}\label{eq:special}
  \raisebox{-6mm}{\includegraphics[height=16mm]{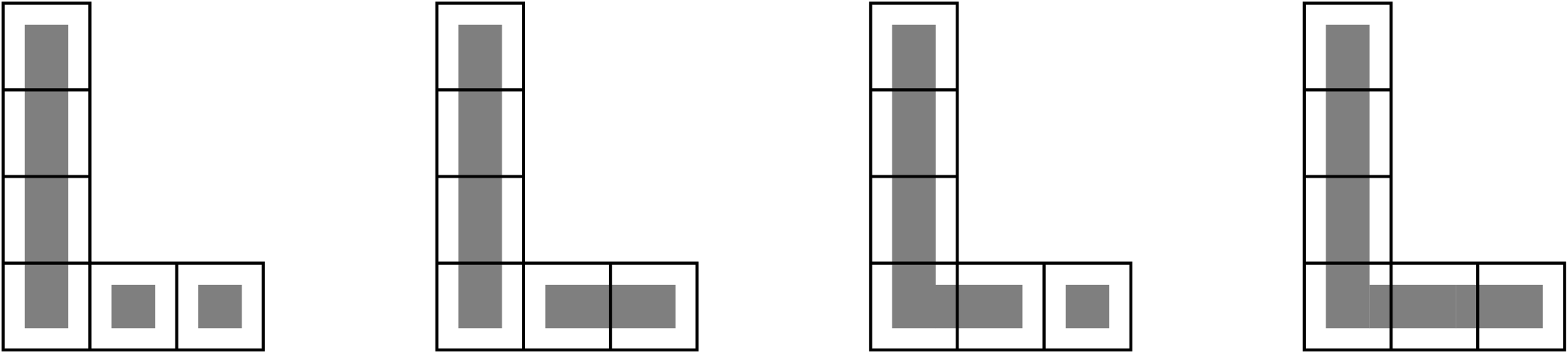}}
\end{equation}
of types $411$, $42$, $51$, $6$, respectively,
to obtain $s_{3111} = e_{411} - e_{42} - e_{51} + e_6$.

Special ribbon diagrams also relate column-strict $P$-tableaux of hook shape
$\mu = k1^{n-k} \vdash n$ to column-strict $P$-tableaux
of shape $\lambda \vdash n$ majorizing $\mu^\tr$.
To state this relationship precisely, we first observe that
subsets of $[k-1]$ correspond bijectively to special ribbon diagrams
of shape $\mu$.
Let $D$ be a Young diagram of shape $\mu$.
For each subset $S \subseteq [k-1]$, define $Q(\mu,S)$ to be
the special ribbon diagram of shape $\mu$ whose ribbons are the equivalence
classes defined by
\begin{enumerate}
\item $D_{i,1} \sim D_{i+1,1}$ for $i = 1,\dotsc,n-k$,
\item $D_{1,j} \sim D_{1, j+1}$ for all $j \in S$,
\end{enumerate}
and define $\lambda(\mu,S)$ to be the type of $Q(\mu,S)$.
For example, when $\mu = 31^3 = (3, 1, 1, 1)$
the ribbon diagrams in (\ref{eq:special})
correspond to the subsets
$\emptyset$, $\{2\}$, $\{1\}$ $\{ 1, 2\}$, respectively.
This bijection leads to another.

\begin{lem}\label{l:ribbon}
  Fix hook partition $\mu = k1^{n-k}$ and subset $S \subseteq [k-1]$,
  and define $Q = Q(\mu,S)$
  and $\lambda = \lambda(\mu,S)$ as above.
  There is a bijection between
  column-strict $P$-tableaux $U$ of shape $\mu$ satisfying
  $U_{1,j} >_P U_{1,j+1}$ for all $j \in S$,
    and column-strict $P$-tableaux of shape $\lambda^\tr$.
\end{lem}
\begin{proof}
  Let $\varphi$ be the claimed bijection.
  Given a column-strict $P$-tableau $U$ of shape $\mu$
  satisfying $U_{1,j} >_P U_{1,j+1}$ for all $j \in S$, create
  $P$-tableau $\varphi(U)$ as follows.
  \begin{enumerate}
  \item Let $D$ be a Young diagram of shape $\lambda^\tr$.
  \item Label the ribbons of $Q$ from left to right as $1, \dotsc, r$,
    and let $q_i$ be the number of boxes in ribbon $i$.
  \item Superimpose $Q$ onto $U$.
  \item For $i = 1,\dotsc, r$,
    place the elements of $U$ under ribbon $i$
    into the leftmost unused column of $D$ which contains exactly $q_i$ boxes,
    so that elements strictly increase in the column from bottom to top.    
  \end{enumerate}
  The resulting $P$-tableau is clearly column-strict of shape $\lambda^\tr$.
  To invert $\varphi$, suppose we are given a column-strict $P$-tableau $T$
  of shape $\lambda^\tr$.
  \begin{enumerate}
    \item Superimpose $Q$ onto an empty tableau $U$ of shape $\mu$.
    \item For $i = 1, \dotsc, r$, fill $U$ by placing the elements
      from column $i$ of $T$ onto the leftmost available ribbon of $Q$
      of length $\lambda_i^\tr$ boxes,
      so that elements decrease from top to bottom, and from left to right.
  \end{enumerate}
  By the definition of $Q$, the resulting $P$-tableau $U$ has entries which
  satisfy the required inequalities,
  and it is easy to see that $\varphi(U) = T$.
\end{proof}
As an example of the above bijection,
fix $\mu = 61$, $S = \{1,5\} \subseteq \{1,2,3,4,5 \}$, and
consider the pair
\begin{equation}\label{eq:tableauxposet2}
  P =
\begin{tikzpicture}[scale=.7,baseline=5]
\draw[fill] (0,2) circle (1mm); \node at (-.5,2) {$7$};
\draw[fill] (.8,1.5) circle (1mm); \node at (1.3,1.5) {$6$};
\draw[fill] (0,1) circle (1mm); \node at (-.5,1) {$5$};
\draw[fill] (0,0) circle (1mm); \node at (-.5,0) {$3$};
\draw[fill] (0,-1) circle (1mm); \node at (-.5,-1) {$1$};
\draw[fill] (.8,.5) circle (1mm); \node at (1.3,.5) {$4$};
\draw[fill] (.8,-.5) circle (1mm); \node at (1.3,-.5) {$2$};
\draw[-,thick] (0,1) -- (0,2);
\draw[-,thick] (.8,.5) -- (0,2);
\draw[-,thick] (.8,.5) -- (.8,1.5);
\draw[-,thick] (0,0) -- (.8,1.5);
\draw[-,thick] (0,1) -- (0,0);
\draw[-,thick] (0,0) -- (0,-1);
\draw[-,thick] (0,1) -- (.8,-.5);
\draw[-,thick] (0,-1) -- (.8,.5);
\draw[-,thick] (.8,.5) -- (.8,-.5);
\end{tikzpicture},
\qquad 
U = {\tableau[scY]{7|5,3,1,6,4,2}}\,,
\end{equation}
where $U$ is a column-strict $P$-tableau of shape $\mu$ which satisfies
$U_{1,i} >_P U_{1,i+1}$, for $i = 1,5$.
Corresponding to $S$
is the special ribbon diagram
\begin{equation}\label{eq:nonno}
Q = \raisebox{-4.5mm}{\includegraphics[height=11mm]{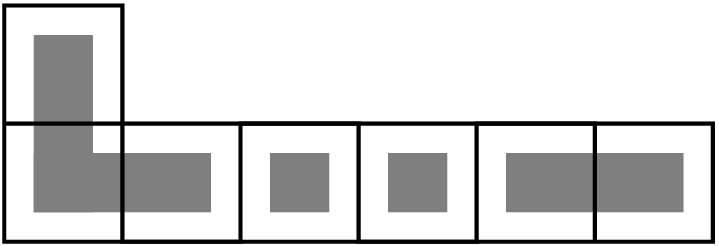}}
\end{equation}
of shape $\mu$ and type $3211$.
Superimposing $Q$ onto $U$
and transforming ribbons into columns,
we obtain the column-strict $P$-tableau
\begin{equation}\label{eq:nonno}
  \varphi(U) = {\tableau[scY]{7|5,4|3,2,1,6}}\,
\end{equation}
of shape $3211^\tr = 421$.

The above bijections allow for a new proof of Kaliszewski's
result~\cite[Prop.\ 4.3]{KaliHook}.
\begin{prop}\label{p:Kali}
  For any $n$-element poset $P$ and hook shape
  $k1^{n-k} \vdash n$,
  the evaluation
  $\chi^{\smash{k1^{n-k}}}(\inc(P))$
equals the number of standard $P$-tableaux of shape $k1^{n-k}$.
\end{prop}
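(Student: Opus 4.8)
The plan is to express $\chi^{k1^{n-k}}$ in the induced sign character basis using the inverse Kostka numbers, evaluate at $\inc(P)$ using the column-strict $P$-tableau interpretation of $\epsilon^\lambda(\inc(P))$ from \eqref{eq:epsilonlambdaP}, and then exhibit a sign-reversing involution (or a direct bijection) identifying the surviving terms with standard $P$-tableaux of shape $k1^{n-k}$. The key observation is that for a hook shape $\mu = k1^{n-k}$, the inverse Kostka numbers $K^{-1}_{\lambda,\mu^\tr}$ appearing in \eqref{eq:epsilontochi} are governed by special ribbon diagrams of shape $\mu$ (equivalently of shape $\mu^\tr = (n-k+1)1^{k-1}$, another hook), which are especially simple: a hook is itself a border strip, and subdividing a hook diagram into ribbons each meeting the first row forces a very restricted combinatorial structure, so the relevant $\lambda$ and the signs $\sgn(Q)$ can be enumerated explicitly.

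First I would write $\chi^{k1^{n-k}}(\inc(P)) = \sum_{\lambda \vdash n} K^{-1}_{\lambda,(k1^{n-k})^\tr}\, \epsilon^\lambda(\inc(P))$ via \eqref{eq:epsilontochi}, and then replace each $\epsilon^\lambda(\inc(P))$ by the number of column-strict $P$-tableaux of shape $\lambda^\tr$ using the second line of \eqref{eq:epsilonlambdaP}. Next I would use \eqref{eq:specribdi} to replace $K^{-1}_{\lambda,(k1^{n-k})^\tr}$ by a signed count of special ribbon diagrams of shape $k1^{n-k}$ and type $\lambda$. This converts the whole expression into a signed sum over pairs $(Q, U)$, where $Q$ is a special ribbon diagram of hook shape $k1^{n-k}$ and $U$ is a column-strict $P$-tableau of shape $\lambda^\tr$ for the type $\lambda$ of $Q$. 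The goal is then to show this signed sum counts standard $P$-tableaux of shape $k1^{n-k}$, i.e.\ column-strict row-semistrict $P$-tableaux of that hook shape. The positive contribution should come from the ``identity'' ribbon diagram (the one partitioning the hook into its arm, of size $k$, and the $n-k$ singleton boxes of the leg), which has type $\lambda = k1^{n-k}$ and sign $(-1)^{k-1}$; matching this with the transpose relabeling of shapes and accounting for the global sign, one identifies its contribution with standard $P$-tableaux of shape $k1^{n-k}$.

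The main obstacle, and the heart of the argument, is constructing the sign-reversing involution that cancels all the remaining $(Q,U)$ pairs. For a hook shape, a special ribbon diagram is determined by how the single long arm is cut and how the leg cells are grouped into ribbons each reaching back to the first row; when a ribbon has more than one cell it contributes horizontally adjacent cells in the arm (changing the sign) or creates a longer part in $\lambda$. The involution should act on $(Q,U)$ by toggling the smallest ``bad'' feature—for instance, merging or splitting the first ribbon at a point dictated by a $P$-order relation among the entries of $U$ that violates the row-semistrict condition—thereby pairing $(Q,U)$ with $(Q',U')$ of opposite sign. Making this precise requires carefully tracking how the column-strict condition on $U$ (of shape $\lambda^\tr$) interacts with the ribbon decomposition; since the shapes involved are all hooks, the bookkeeping is finite and local, but verifying that the involution is well-defined, sign-reversing, and has exactly the standard $P$-tableaux as fixed points is where the real work lies. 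As a sanity check, specializing $P$ to an antichain (so $\inc(P) = K_n$ and every pair is $P$-incomparable) must recover the classical identity $\chi^{k1^{n-k}}(1^n) = \binom{n-1}{k-1}$, the number of standard Young tableaux of hook shape $k1^{n-k}$.
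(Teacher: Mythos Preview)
Your approach is essentially the same as the paper's: express $\chi^\mu$ via \eqref{eq:epsilontochi}, interpret $\epsilon^\lambda(\inc(P))$ via column-strict $P$-tableaux, and use special ribbon diagrams for the inverse Kostka numbers. The paper's execution is cleaner, however. Rather than building a sign-reversing involution on pairs $(Q,T)$, it observes that for a hook $\mu = k1^{n-k}$ the special ribbon diagrams of shape $\mu$ are in natural bijection with subsets $S \subseteq [k-1]$ (where $j \in S$ iff cells $(1,j)$ and $(1,j+1)$ lie in the same ribbon), with $\sgn(Q) = (-1)^{|S|}$. Under the evident bijection $(Q,T) \leftrightarrow (S,U)$ (read each ribbon of $Q$ as a column of $T$), the signed sum becomes $\sum_{S \subseteq [k-1]} (-1)^{|S|} b(\mu,S)$, where $b(\mu,S)$ counts column-strict $P$-tableaux $U$ of shape $\mu$ with $U_{1,j} >_P U_{1,j+1}$ for all $j \in S$. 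This is exactly the inclusion--exclusion expression for the number of standard $P$-tableaux of shape $\mu$, and the proof ends there---no involution needed.

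Your description of the ``identity'' ribbon diagram contains a concrete error: in a special ribbon diagram every ribbon must contain a cell from the first row, so the leg cells $(2,1),\dotsc,(n-k+1,1)$ cannot be singletons---they must all lie in the single ribbon containing $(1,1)$. The diagram corresponding to $S=\emptyset$ therefore has type $\mu^\tr = (n-k+1)1^{k-1}$ and sign $+1$, not type $k1^{n-k}$ with sign $(-1)^{k-1}$. This does not invalidate your strategy, but correcting it would almost certainly lead you to the paper's inclusion--exclusion formulation and spare you the work of constructing an involution.
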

\begin{proof}
  Fix $\mu = k1^{n-k}$,
  let $a(\mu)$ be the number of standard $P$-tableaux of shape $\mu$,
  and for each subset $S \subseteq [k-1]$, let $b(\mu,S)$ be the number of
  column-strict $P$-tableaux $U$ of shape $\mu$ which satisfy
  \begin{equation}\label{eq:Sdec}
    U_{1,j} >_P U_{1,j+1} \qquad \text{for all $j \in S$}.
    \end{equation}
  By the principle of
  inclusion-exclusion, these are related by
  \begin{equation}\label{eq:inclexcl}
    a(\mu) = \sum_{S \subseteq [k-1]} (-1)^{|S|} b(\mu,S).
  \end{equation}
  Each subset $S \subseteq [k-1]$ corresponds to a special ribbon diagram
  $Q = Q(\mu,S)$ of shape $\mu$ as described before Lemma~\ref{l:ribbon}.
  The partition $\lambda(\mu, S)$
  satisfies
  $|S| = k - \ell(\lambda(\mu,S))$.

  By Lemma~\ref{l:ribbon}, $b(\mu,S)$ is
  also the number of pairs $(Q,T)$ 
  with $Q$ a special ribbon diagram of shape $\mu$ and type $\lambda(\mu,S)$,
  and $T$ a column-strict $P$-tableau of shape $\lambda^\tr$.
  Thus we may rewrite (\ref{eq:inclexcl}) by summing over pairs $(S,U)$ and
  $(Q,T)$ satisfying the above conditions,
  \begin{equation*}
    a(\mu) = \sum_{(S,U)} (-1)^{|S|} = \sum_{(Q,T)} (-1)^{k - \ell(\type(Q))}.
  \end{equation*}
  Now collect terms in the last sum which correspond to
  special ribbon diagrams sharing the same partition $\type(Q) \vdash n$.
  Summing first over $\lambda \vdash n$
  and then over special ribbon diagrams $Q$ of shape $\mu$
  and type $\lambda$,
  we have that $a(\mu)$ is
  \begin{equation*}
    \sum_{\lambda \vdash n} \sum_{Q} (-1)^{k - \ell(\lambda)}
    (\# \text{column-strict $P$-tableaux of shape $\lambda^\tr$})
    = \sum_{\lambda \vdash n} \sum_Q \sgn(Q) \epsilon^\lambda(\inc(P)).
  \end{equation*}
By (\ref{eq:specribdi}) and (\ref{eq:epsilontochi}), this is
\begin{equation*}
  \sum_{\lambda \vdash n} K^{-1}_{\lambda,\mu^\tr\;} \epsilon^\lambda(\inc(P))
  = \chi^\mu(\inc(P)).
\end{equation*}

  \end{proof}

\ssec{Induced trivial characters / monomial coefficients of $\omega X_{\inc(P)}$}\label{ss:indtriv}
Given graph $G = (V,E)$ call a directed graph $O = (V,E')$ an {\em orientation}
of $G$ if $O$ is obtained from $G$ by replacing each
undirected edge $\{i,j\} \in E$
with exactly one of the directed edges $(i,j)$ or $(j,i)$.
Call $O$ {\em acyclic} if it has no directed cycles.
For example, a graph and two of its orientations
are 
\begin{equation*}
  G = \ntnsp
\begin{tikzpicture}[scale=.7,baseline=-5]
\draw[fill] (-1.2,.5) circle (1mm); \node at (-1.2,1) {$1$};
\draw[fill] (-.4,.5) circle (1mm); \node at (-.4,1) {$2$};
\draw[fill] (.4,.5) circle (1mm); \node at (.4,1) {$4$};
\draw[fill] (0,-.5) circle (1mm); \node at (0,-1) {$3$};
\draw[fill] (1.2,.5) circle (1mm); \node at (1.2,1) {$5$};
\draw[-,thick] (-1.2,.5) -- (0,-.5);
\draw[-,thick] (-.4,.5) -- (0,-.5);
\draw[-,thick] (.4,.5) -- (0,-.5);
\draw[-,thick] (1.2,.5) -- (0,-.5);
\draw[-,thick] (-1.2,.5) -- (-.4,.5);
\draw[-,thick] (-.4,.5) -- (.4,.5);
\draw[-,thick] (.4,.5) -- (1.2,.5);
\end{tikzpicture}\,,
\qquad
O_1 = \ntnsp
\begin{tikzpicture}[scale=.7,baseline=-5]
\draw[fill] (-1.2,.5) circle (1mm); \node at (-1.2,1) {$1$};
\draw[fill] (-.4,.5) circle (1mm); \node at (-.4,1) {$2$};
\draw[fill] (.4,.5) circle (1mm); \node at (.4,1) {$4$};
\draw[fill] (0,-.5) circle (1mm); \node at (0,-1) {$3$};
\draw[fill] (1.2,.5) circle (1mm); \node at (1.2,1) {$5$};
\draw[->,thick] (-1.2,.5) -- (-.15,-.43);
\draw[<-,thick] (-.35,.35) -- (0,-.5);
\draw[->,thick] (.4,.5) -- (0.05,-.35);
\draw[<-,thick] (1.09,.4) -- (0,-.5);
\draw[->,thick] (-1.2,.5) -- (-.55,.5);
\draw[->,thick] (-.4,.5) -- (.25,.5);
\draw[->,thick] (.4,.5) -- (1.05,.5);
\end{tikzpicture}\,,
\qquad
O_2 = \ntnsp
\begin{tikzpicture}[scale=.7,baseline=-5]
\draw[fill] (-1.2,.5) circle (1mm); \node at (-1.2,1) {$1$};
\draw[fill] (-.4,.5) circle (1mm); \node at (-.4,1) {$2$};
\draw[fill] (.4,.5) circle (1mm); \node at (.4,1) {$4$};
\draw[fill] (0,-.5) circle (1mm); \node at (0,-1) {$3$};
\draw[fill] (1.2,.5) circle (1mm); \node at (1.2,1) {$5$};
\draw[->,thick] (-1.2,.5) -- (-.15,-.43);
\draw[<-,thick] (-.35,.35) -- (0,-.5);
\draw[->,thick] (.4,.5) -- (0.05,-.35);
\draw[<-,thick] (1.09,.4) -- (0,-.5);
\draw[->,thick] (-1.2,.5) -- (-.55,.5);
\draw[<-,thick] (-.25,.5) -- (.4,.5);
\draw[->,thick] (.4,.5) -- (1.05,.5);
\end{tikzpicture}\,.
\end{equation*}
$O_2$ is acyclic; $O_1$ is not.  

By \cite{StanSymm}
we have for all graphs $G$ that
\begin{equation}\label{eq:etanPao}
  \eta^n(G) = \# \text{ acyclic orientations of $G$},
\end{equation}
and as a consequence (or by Proposition~\ref{p:Kali}) we have
for all posets $P$ that
\begin{equation}\label{eq:etanPdfree}
\eta^n(\inc(P))
    = \# \text{$P$-descent-free $P$-permutations}.
\end{equation}
When $P$ is a unit interval order labeled as in (\ref{eq:altrespect})
we also have~\cite[Thm.\,4.7]{CHSSkanEKL}
\begin{equation}\label{eq:etalambdaP}
  \eta^\lambda(\inc(P)) =
  \# \text{$P$-descent-free $P$-tableaux of shape $\lambda$}.
\end{equation}
We will extend this result to all
posets in Theorem~\ref{t:etalambdaP} and will
include combinatorial interpretations related to
$P$-excedance-free $P$-tableaux and acyclic orientations.
To do so, we consider some straightforward extensions of permutation statistics
to $P$-permutations.

Let $w$ be a $P$-permutation, and let
$\pexc(w)$ and $\paexc(w)$ be the numbers of $P$-excedances and
$P$-antiexcedances in $w$,
\begin{equation*}
  \pexc(w) = \# \{ i \,|\, w_i >_P i \},
  \qquad
  \paexc(w) = \# \{ i \,|\, w_i <_P i \}.
\end{equation*}
Let $\pdes(w)$ and $\pasc(w)$ be the numbers of $P$-descents and $P$-ascents
in $w$,
\begin{equation*}
  \pdes(w) = \# \{ i \,|\, w_i >_P w_{i+1} \},
  \qquad
  \pasc(w) = \# \{ i \,|\, w_i <_P w_{i+1} \}.
\end{equation*}

Define the {\em standard cycle notation} of $w \in \sn$ to be
the cycle notation in which cycles are listed in increasing order of their
greatest elements, and these greatest elements are listed first in each cycle.
Let $\sigma: \sn \rightarrow \sn$ be the bijection~\cite[\S 1.3]{StanEC1}
defined by setting
$\sigma(w)$ equal to the permutation whose one-line notation is obtained
by erasing parentheses from the standard cycle notation of $w$.
For example, to compute $\sigma(5243761)$, we write $5243761$
in standard cycle notation as $(2)(4,3)(6)(7,1,5)$,
since the greatest elements of the cycles satisfy $2 < 4 < 6 < 7$.
Then we erase parentheses to obtain $2436715$.

The following result is a strengthening of \cite[Exercise\,3.60c]{StanEC1}.
It first appeared with a different proof in
\cite[Thm.\,4.6]{EinarThesis}.
\begin{prop}\label{p:equidist}
  For any
  poset $P$, the
  statistics $\pdes$, $\pasc$, $\pexc$, $\paexc$
  are equally distributed on the set of all $P$-permutations.
\end{prop}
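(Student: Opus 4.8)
\begin{proofidea}
The plan is to link the four statistics by a short chain of bijections: two elementary involutions valid for every poset $P$, and a third link that applies the fundamental bijection $\sigma$ after reducing to a conveniently labeled poset.

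First I would handle the two ``easy'' edges. Word reversal $w = w_1\cdots w_n \mapsto w^R = w_n\cdots w_1$ is a bijection of the set of $P$-permutations, and the slot $(j,j+1)$ of $w^R$ compares $w_{n+1-j}$ with $w_{n-j}$; hence it is a $P$-descent of $w^R$ exactly when the slot $(n-j,\,n-j+1)$ is a $P$-ascent of $w$, so $\pdes(w^R) = \pasc(w)$ and $\pdes$, $\pasc$ are equidistributed. Likewise permutation inversion $w \mapsto w^{-1}$ is a bijection of $P$-permutations: writing $j = w^{-1}(i)$, the inequality $w^{-1}(i) >_P i$ becomes $j >_P w(j)$, i.e.\ $w_j <_P j$, so $\pexc(w^{-1}) = \paexc(w)$ and $\pexc$, $\paexc$ are equidistributed. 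It remains only to show that $\pdes$ and $\pexc$ are equidistributed on $P$-permutations.

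For this I would first note that both $\sum_{w \in \sn} t^{\pdes(w)}$ and $\sum_{w \in \sn} t^{\pexc(w)}$ depend only on the isomorphism type of $P$: if $P'$ is $P$ with its elements relabeled by a bijection $g$ of $[n]$, then $w \mapsto g(w_1)\cdots g(w_n)$ is a bijection of $\sn$ preserving $\pdes$, while conjugation $w \mapsto gwg^{-1}$ is a bijection of $\sn$ preserving $\pexc$. Relabeling $P$ along a linear extension makes it \emph{naturally labeled}, meaning $i <_P j$ implies $i < j$, so we may assume $P$ is of this form. I then claim $\pdes(w) = \pexc(\sigma(w))$ for every $w$, which (together with the previous paragraph) completes the proof. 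This rests on a refinement of \cite[Exercise~3.60c]{StanEC1}: for $u = \sigma(w)$, the multiset of \emph{descent pairs} $\{\,\{w_i,w_{i+1}\} : 1 \le i \le n-1,\ w_i > w_{i+1}\,\}$ of $w$ equals the multiset of \emph{excedance pairs} $\{\,\{p,u_p\} : 1 \le p \le n,\ u_p > p\,\}$ of $u$. Granting this, since $P$ is naturally labeled a $P$-descent of $w$ can occur only at a genuine descent slot and a $P$-excedance of $u$ only at a genuine excedance slot; hence $\pdes(w)$ equals the number of descent pairs $\{b,a\}$ of $w$ (with $b < a$) for which $a >_P b$, and $\pexc(\sigma(w))$ equals the number of excedance pairs $\{b,a\}$ of $u$ for which $a >_P b$, and these two counts coincide by the multiset identity.

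The main obstacle is the refined form of \cite[Exercise~3.60c]{StanEC1}. I would prove it by the usual cycle-structure analysis of $\sigma$: writing $w$ in standard cycle notation exhibits $u = \sigma(w)$ as the concatenation of the cycles of $w$, each led by its largest element and the cycles arranged in increasing order of leaders; reading $u$ from left to right, one checks that each descent of $w$ contributes exactly one excedance of $u$ built from the same two values, and conversely, so the two multisets agree. This is Stanley's counting argument for the exercise carried out while recording the unordered pair of values rather than just its cardinality; the lone ``leftover'' slot of $u$, at the position holding the value $1$, is never an excedance (that position is at least $1$, and $P$ is naturally labeled), so it introduces no discrepancy.
\end{proofidea}
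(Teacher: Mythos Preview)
Your reversal and inversion bijections are fine, and your reduction to a naturally labeled $P$ is legitimate (and in fact clarifies a point the paper leaves implicit). The trouble is the third link: the identity $\pdes(w) = \pexc(\sigma(w))$ and the supporting multiset equality are \emph{false}. Take $w = 3142$ with $P$ the chain on $[4]$ (certainly naturally labeled). Then $\des(w) = 2$, with descent pairs $\{3,1\}$ and $\{4,2\}$. The single cycle of $w$ in standard form is $(4,2,1,3)$, so $\sigma(w) = 4213$, which has only the one excedance at position~$1$, giving the lone excedance pair $\{1,4\}$. Thus $\exc(\sigma(w)) = 1 \neq 2 = \des(w)$, and the multisets disagree as well.

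What your cycle-reading sketch actually proves is the identity in the other direction: if $u = \sigma(w)$ and $a_j, a_{j+1}$ lie in the same cycle of $w$, then $a_{j+1} = w(a_j)$, so a descent $a_j > a_{j+1}$ of $u$ at slot $j$ is exactly an antiexcedance $w(a_j) < a_j$ of $w$ at position $a_j$; for naturally labeled $P$ the cycle-boundary slots of $u$ are never $P$-descents and the corresponding positions of $w$ are never $P$-antiexcedances. Recording the unordered pair at each step gives the correct multiset equality
\[
\{\,\{u_j,u_{j+1}\} : u_j >_P u_{j+1}\,\} \;=\; \{\,\{p,w_p\} : w_p <_P p\,\},
\]
hence $\pdes(\sigma(w)) = \paexc(w)$. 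This links $\pdes$ directly to $\paexc$ (not to $\pexc$), and combined with your first two bijections it finishes the proof. This is precisely the route the paper takes; you had the right mechanism but assigned the roles of $w$ and $\sigma(w)$, and of excedance and antiexcedance, the wrong way around.
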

\begin{proof}
  ($\pdes \sim \pasc$) We have $\pasc(w) = \pdes(w_n \cdots w_1)$.

  \noindent ($\pexc \sim \paexc$) We have $\paexc(w) = \pexc(w^{-1})$.

  \noindent ($\pdes \sim \paexc$)
  Assume first that $P$ is naturally labeled (\ref{eq:natlabel}).
  We claim that the map $\sigma$ satisfies $\pdes(\sigma(w)) = \paexc(w)$.
  To see this, write $w$ in standard cycle notation
  and $\sigma(w)$ in one-line notation as
  \begin{equation*}
    \begin{aligned}
      w &= (a_1, a_2, \dotsc, a_{i_1})
      (a_{i_1 +1}, a_{i_1 + 2}, \dotsc, a_{i_2}) \cdots
      (a_{i_{k-1} +1}, a_{i_{k-1} + 2}, \dotsc, a_{i_k} = a_n),\\
      \sigma(w) &= a_1, a_2, \dotsc, a_{i_1},
      a_{i_1 +1}, a_{i_1 + 2}, \dotsc, a_{i_2}, \dotsc,
      a_{i_{k-1} +1}, a_{i_{k-1} + 2}, \dotsc, a_{i_k} = a_n.
    \end{aligned}
  \end{equation*}
  Suppose that $j$ is a $P$-descent of $\sigma(w)$.
  By the natural labeling of $P$, we have $a_j > a_{j+1}$ and therefore
  $a_j$ can not appear last in its cycle
  in the standard cycle notation for $w$.
  Thus we have $a_j >_P a_{j+1} = w(a_j)$ and
  position $a_j$ is a $P$-antiexcedance of $w$.
  Thus position $a_j$ is a $P$-antiexcedance of $w$.
  Now suppose that $j$ is not a $P$-descent of $\sigma(w)$.
   Then in the standard cycle notation for $w$
  we have either that
  $a_j, a_{j+1}$ appear consecutively in a cycle and satisfy
  $a_j \not >_P a_{j+1} = w(a_j)$,
  or that $a_j$ appears last in its cycle
  and satisfies $a_j \leq w(a_j)$.
  By the natural labeling of $P$, this last inequality implies
  $a_j \not >_P w(a_j)$. Thus in both cases 
  position $a_j$ is not a $P$-antiexcedance of $w$.

  Now assume that $P$ is nonnaturally labeled,
  and let $P'$ be a naturally labeled copy of $P$.
  Then for some $u \in \sn$, the poset isomorphism $P \rightarrow P'$
  is given by $i \mapsto u_i$,
  and the bijection $w \mapsto uw$ from $\sn$ to itself
  satisfies $\aexc_P(w) = \aexc_{P'}(uw)$.
\end{proof}

Given a graph $G$ on $n$ vertices
and an ordered set partition $(I_1, \dotsc, I_r)$ of $[n]$
of type $\lambda \vdash n$,
call the sequence $(G_{I_1}, \dotsc, G_{I_r})$
an {\em ordered induced subgraph partition} of $G$ of type $\lambda$.
Let $\oisp_\lambda(G)$ be the set of such sequences, and
define an {\em acyclic orientation} of an element of $\oisp_\lambda(G)$ to be
a sequence $(O_1, \dotsc, O_r)$,
where $O_j$ is an acyclic orientation of $G_{I_j}$.
For example, consider the ordered set partition
$(234,15)$
of type $(3,2)$.
A graph $G$,
its ordered induced subgraph partition
$(G_{234}, G_{15})$, and one acyclic orientation $(O_1, O_2)$ of this are
\begin{equation}\label{eq:oispao}
  G = \ntnsp
\begin{tikzpicture}[scale=.7,baseline=-5]
\draw[fill] (-1.2,.5) circle (1mm); \node at (-1.2,1) {$1$};
\draw[fill] (-.4,.5) circle (1mm); \node at (-.4,1) {$2$};
\draw[fill] (.4,.5) circle (1mm); \node at (.4,1) {$4$};
\draw[fill] (0,-.5) circle (1mm); \node at (0,-1) {$3$};
\draw[fill] (1.2,.5) circle (1mm); \node at (1.2,1) {$5$};
\draw[-,thick] (-1.2,.5) -- (0,-.5);
\draw[-,thick] (-.4,.5) -- (0,-.5);
\draw[-,thick] (.4,.5) -- (0,-.5);
\draw[-,thick] (1.2,.5) -- (0,-.5);
\draw[-,thick] (-1.2,.5) -- (-.4,.5);
\draw[-,thick] (-.4,.5) -- (.4,.5);
\draw[-,thick] (.4,.5) -- (1.2,.5);
\end{tikzpicture}\ntksp,
\qquad
  (G_{234}, G_{15}) = \ntnsp
\Bigg(
\begin{tikzpicture}[scale=.7,baseline=-5]
\draw[fill] (-.4,.5) circle (1mm); \node at (-.4,1) {$2$};
\draw[fill] (.4,.5) circle (1mm); \node at (.4,1) {$4$};
\draw[fill] (0,-.5) circle (1mm); \node at (0,-1) {$3$};
\draw[-,thick] (-.4,.5) -- (0,-.5);
\draw[-,thick] (.4,.5) -- (0,-.5);
\draw[-,thick] (-.4,.5) -- (.4,.5);
\end{tikzpicture}\nTksp,\
\begin{tikzpicture}[scale=.7,baseline=-5]
\draw[fill] (0,.5) circle (1mm); \node at (0,1) {$5$};
\draw[fill] (0,-.5) circle (1mm); \node at (0,-1) {$1$};
\end{tikzpicture}
\Bigg),
\qquad
  (O_1, O_2) = \ntnsp
\Bigg(
\begin{tikzpicture}[scale=.7,baseline=-5]
\draw[fill] (-.4,.5) circle (1mm); \node at (-.4,1) {$2$};
\draw[fill] (.4,.5) circle (1mm); \node at (.4,1) {$4$};
\draw[fill] (0,-.5) circle (1mm); \node at (0,-1) {$3$};
\draw[<-,thick] (-.35,.35) -- (0,-.5);
\draw[->,thick] (.4,.5) -- (0.05,-.35);
\draw[<-,thick] (-.25,.5) -- (.4,.5);
\end{tikzpicture}\nTksp,\
\begin{tikzpicture}[scale=.7,baseline=-5]
\draw[fill] (0,.5) circle (1mm); \node at (0,1) {$5$};
\draw[fill] (0,-.5) circle (1mm); \node at (0,-1) {$1$};
\end{tikzpicture}
\Bigg).
\end{equation}

\begin{thm}\label{t:etalambdaP}
  For any
  poset $P$ and partition $\lambda = (\lambda_1,\dotsc,\lambda_r) \vdash |P|$,
  the number $\eta^\lambda(\inc(P))$ has the 
  combinatorial interpretations
  \begin{enumerate}
  \item $\#$ $P$-descent-free $P$-tableaux of shape $\lambda$,
  \item $\#$ $P$-excedance-free $P$-tableaux of shape $\lambda$,
  \item $\#$ acyclic orientations of sequences $(\inc(P_{I_1}), \dotsc, \inc(P_{I_r})) \in \oisp_\lambda(\inc(P))$.
  \end{enumerate}
\end{thm}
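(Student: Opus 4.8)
The plan is to reduce all three interpretations to the single-row case $r = 1$, where the assertions are precisely \eqref{eq:etanPao} and \eqref{eq:etanPdfree}, and then to reassemble the general case using the Frobenius factorization of Lemma~\ref{l:efactor}(2). First I would note that, since $h_\lambda = h_{\lambda_1} \cdots h_{\lambda_r}$ and $h_m$ corresponds under the Frobenius isomorphism to $\eta^m$, the trace $\eta^\lambda$ is obtained by inducing $\eta^{\lambda_1} \otimes \cdots \otimes \eta^{\lambda_r}$ from the Young subgroup to the full symmetric group; hence iterating Lemma~\ref{l:efactor}(2), peeling off one part $\lambda_j$ at a time, gives
\begin{equation*}
  \eta^\lambda(\inc(P)) = \sum_{(I_1,\dotsc,I_r)} \eta^{\lambda_1}(\inc(P_{I_1})) \cdots \eta^{\lambda_r}(\inc(P_{I_r})),
\end{equation*}
the sum being over ordered set partitions $(I_1, \dotsc, I_r)$ of type $\lambda$, with $P_{I_j}$ the induced subposet.

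Next I would apply the single-row identities to each factor $\eta^{\lambda_j}(\inc(P_{I_j}))$. By \eqref{eq:etanPdfree} it is the number of $P_{I_j}$-descent-free $P_{I_j}$-permutations; by \eqref{eq:etanPao} it is the number of acyclic orientations of $\inc(P_{I_j})$; and combining \eqref{eq:etanPdfree} with the equidistribution $\pdes \sim \pexc$ from Proposition~\ref{p:equidist} (applied to the poset $P_{I_j}$) it is also the number of $P_{I_j}$-excedance-free $P_{I_j}$-permutations. The remaining step is the bijective observation that a $P$-tableau of shape $\lambda$ amounts to a choice of ordered set partition $(I_1, \dotsc, I_r)$ of type $\lambda$ — recording the contents of the rows — together with an arrangement of each $I_j$ into a row, and that being $P$-descent-free (respectively $P$-excedance-free) is a condition on each row separately, coinciding on row $j$ with being $P_{I_j}$-descent-free (respectively $P_{I_j}$-excedance-free) for the induced poset; likewise an acyclic orientation of a sequence $(\inc(P_{I_1}), \dotsc, \inc(P_{I_r})) \in \oisp_\lambda(\inc(P))$ is just a choice of acyclic orientation in each coordinate. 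Thus each of the three claimed quantities equals $\sum_{(I_1,\dotsc,I_r)} \prod_j \eta^{\lambda_j}(\inc(P_{I_j}))$, which by the displayed factorization is $\eta^\lambda(\inc(P))$.

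I do not anticipate a genuine obstacle, since the substantive work is all contained in results already in hand — the factorization of Lemma~\ref{l:efactor}(2), the base cases \eqref{eq:etanPao} and \eqref{eq:etanPdfree}, and Proposition~\ref{p:equidist}, which together generalize the unit-interval-order statement \eqref{eq:etalambdaP}. The only points requiring care are verifying that iterating Lemma~\ref{l:efactor}(2) really does produce a sum over all ordered set partitions of type $\lambda$ (with the correct multiplicities when $\lambda$ has repeated parts), and checking that the defining conditions for $P$-descent-free and $P$-excedance-free tableaux decouple across rows so that the counts factor as products over the rows.
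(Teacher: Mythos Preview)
Your proposal is correct and follows essentially the same approach as the paper: both use the factorization $\eta^\lambda(\inc(P)) = \sum_{(I_1,\dotsc,I_r)} \prod_j \eta^{\lambda_j}(\inc(P_{I_j}))$ from Lemma~\ref{l:efactor}, invoke \eqref{eq:etanPdfree} for the descent-free interpretation on each row, apply Proposition~\ref{p:equidist} to pass to the excedance-free interpretation, and use the known bijection between acyclic orientations and descent-free permutations for interpretation~(3). The only cosmetic difference is that the paper spells out the bijection for~(3) explicitly rather than citing \eqref{eq:etanPao} alone.
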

\begin{proof}
  Let $\lambda = (\lambda_1,\dotsc,\lambda_r)$.
  Since $h_\lambda = h_{\lambda_1} \cdots h_{\lambda_r}$, Lemma~\ref{l:efactor}
  implies that
  induced trivial characters satisfy
  \begin{equation}\label{eq:etaid}
    \eta^\lambda(\inc(P)) = \ntksp \sum_{(I_1,\dotsc,I_r)} \ntksp
    \eta^{\lambda_1}(\inc(P_{I_1})) \cdots \eta^{\lambda_r}(\inc(P_{I_r})),
  \end{equation}
  where the sum is over all ordered set partitions of $|P|$ of type $\lambda$.
  By (\ref{eq:etanPdfree}), we have for all $n$-element posets $P$ that
  $\eta^n(\inc(P))$ is the number of $P$-descent-free $P$-permutations.
  This implies interpretation (1) of the theorem, and then
  Proposition~\ref{p:equidist} implies interpretation (2).

  Interpretation (3) follows from the known bijection between
  acyclic orientations of $\inc(P)$
  and $P$-descent-free $P$-permutations.  (See, e.g., \cite[\S 4]{AthanPSE},
  \cite[Exercise\,3.60b]{StanEC1}.)
  Specifically, given an acyclic orientation
  $(O_1,\dotsc,O_r)$ of a sequence
  $(\inc(P_{I_1}), \dotsc, \inc(P_{I_r}))$ in $\oisp_\lambda(\inc(P))$, 
  create row-semistrict $P$-tableau $U$ of shape $\lambda$ as follows.

  For $i = 1,\dotsc,r$, do
    \begin{enumerate}
      \item Initialize $U_i$ to be the empty Young diagram of shape $\lambda_i$.
      \item While $O_i$ is not empty do
        \begin{enumerate}
        \item Let $j$ be the minimum element of the chain in $P$ consisting
          of the source vertices in $O_i$.
        \item Update $O_i$ by removing $j$ and its outgoing edges.
        \item Update $U_i$ by placing $j$ in its leftmost empty box.
        \end{enumerate}
    \end{enumerate}

  Given a row-semistrict $P$-tableau $U$ of shape $\lambda$,
  create an induced subgraph partition of $\inc(P)$ and an
  acyclic orientation $(O_1,\dotsc,O_r)$ of it as follows.

   For $i=1,\dotsc,r$, do
    \begin{enumerate}
      \item Let the vertices of $O_i$ be the set of elements in $U_i$.
      \item For all pairs $(j,k)$ of entries of $U_i$ which are incomparable
        in $P$, if $j$ precedes $k$ in $U_i$, then create a directed edge
        from $j$ to $k$ in $O_i$.
    \end{enumerate}
\end{proof}

The special case of Theorem~\ref{t:etalambdaP} (2)
corresponding to $P$ a unit interval order and $\lambda = n$
has an interpretation in terms of the Bruhat order on $\sn$.
\begin{cor}\label{c:bruhatexcfree}
  Let $P$ be a unit interval order on $[n]$ labeled as in (\ref{eq:altrespect})
  and let $w \in \sn$ be the corresponding $312$-avoiding permutation
  as in (\ref{eq:uioto312avoid}).
  Then we have
  \begin{equation*}
    \{ v \in \sn \,|\, \exc_P(v) = 0 \} = \{ v \in \sn \,|\, v \leq w \}.
  \end{equation*}
\end{cor}
\begin{proof}
  Define the matrix
  $A = (a_{i,j})$ by
  \begin{equation*}
    a_{i,j} = \begin{cases}
      0 &\text{if $i <_P j$}\\
      1 &\text{otherwise}.
    \end{cases}
    \end{equation*}
  By \cite[Lem.\,5.3 (3)]{SkanNNDCB}, the product $\permmon av$ is
  $1$ if $v \leq w$ and is $0$ otherwise.  But $\permmon av = 1$ if and only
  if $i \not <_P v_i$ for $i = 1,\dotsc,n$, i.e., if and only if
  $v$ is $P$-excedance free.
  \end{proof}

\ssec{Power sum traces / scaled power sum coefficients of $\omega X_{\inc(P)}$}
It is known that we have
  $\psi^\lambda(\inc(P)) \geq 0$
for all $P$~\cite{StanSymm}, and
\begin{equation}\label{eq:psilambdaP}
  \begin{aligned}
    \psi^\lambda(\inc(P))
    &= \# \text{ cyclically row-semistrict $P$-tableaux of shape $\lambda$}\\
    &= \# \text{ $P$-record-free, row-semistrict $P$-tableaux of shape $\lambda$}
    \end{aligned}
\end{equation}
for all unit interval orders $P$ labeled as in
(\ref{eq:altrespect})~\cite[Thm.\,4]{AthanPSE},
\cite[Thm.\,4.7]{CHSSkanEKL},
\cite[\S 7]{SWachsChromQF}.
We will extend these results to all posets 
in Theorem~\ref{t:psilambdaP}, and will include more combinatorial
interpretations involving $\inc(P)$ and a related directed graph.
Define $\ngr(P)$ to be the directed graph whose vertices
are the elements of $P$ and whose edges are the
ordered pairs $\{ (i,j) \in P^2 \,|\, i \not >_P j \}$,
including loops $(i,i)$ for all $i \in P$.
For example,
a poset $P$
and related directed graph
$\ngr(P)$ are
\begin{equation}\label{eq:ngr}
  P = \ntnsp
\begin{tikzpicture}[scale=.7,baseline=-5]
\draw[fill] (0,.5) circle (1mm); \node at (0,1) {$4$};
\draw[fill] (0,-.5) circle (1mm); \node at (0,-1) {$1$};
\draw[fill] (.8,.5) circle (1mm); \node at (.8,1) {$5$};
\draw[fill] (.8,-.5) circle (1mm); \node at (.8,-1) {$2$};
\draw[fill] (1.6,0) circle (1mm); \node at (1.6,-.5) {$3$};
\draw[-,thick] (0,.5) -- (0,-.5);
\draw[-,thick] (.8,.5) -- (0,-.5);
\draw[-,thick] (.8,.5) -- (.8,-.5);
\end{tikzpicture}\,,
\qquad 
  \ngr(P) = 
\begin{tikzpicture}[scale=.7,baseline=-5]
\draw[fill] (0,0) circle (1mm); \node at (0,-.85) {$3$};
\draw[fill] (-1,-1) circle (1mm); \node at (-1.6,-1.6) {$1$};
\draw[fill] (-1,1) circle (1mm); \node at (-1.6,.8) {$2$};
\draw[fill] (1,1) circle (1mm); \node at (1.6,.8) {$4$};
\draw[fill] (1,-1) circle (1mm); \node at (1.6,-1.6) {$5$};
\draw[->,thick] (-1,1) -- (-1,-.85);
\draw[->,thick] (-1,-1) to [out=135,in=-120] (-1.05,.85);
\draw[<-,thick] (-.85,1) -- (1,1);
\draw[->,thick] (-1,1) to [out=45,in=150] (.85,1.05);
\draw[<-,thick] (1,.85) -- (1,-1);
\draw[->,thick] (1,1) to [out=-45,in=60] (1.05,-.85);
\draw[->,thick] (-1,-1) to [out=-45,in=-150] (.85,-1.05);
\draw[-,thick] (-1,-1) to [out=160,in=-135] (-1.6,1.6);
\draw[->,thick] (-1.6,1.6) to [out=45,in=110] (.9,1.1);
\draw[-,thick] (-1,1) to [out=70,in=135] (1.6,1.6);
\draw[->,thick] (1.6,1.6) to [out=-45,in=20] (1.1,-.9);
\draw[<-,thick] (-.85,-1) to [out=20,in=-145](0,0);
\draw[->,thick] (-1,-1) to [out=65,in=-165](-.15,-.04);
\draw[->,thick] (-1,1) to [out=-25,in=115] (-.05,.15);
\draw[<-,thick] (-.9,.9) to [out=-65,in=165] (0,0);
\draw[->,thick] (1,1) to [out=-105,in=25] (.1,.05);
\draw[<-,thick] (.9,.9) to [out=-155,in=75] (0,0);
\draw[<-,thick] (.85,-1) to [out=155,in=-35] (0,0);
\draw[->,thick] (1,-1) to [out=105,in=-15] (.15,-.04);
\draw[-,thick] (-1,1) to [out=90,in=0] (-1.2,1.4);
\draw[-,thick] (-1.2,1.4) to [out=180,in=90] (-1.4,1.2);
\draw[->,thick] (-1.4,1.2) to [out=-90,in=150] (-1.15,1);
\draw[-,thick] (-1,-1) to [out=-90,in=0] (-1.2,-1.4);
\draw[-,thick] (-1.2,-1.4) to [out=180,in=-90] (-1.4,-1.2);
\draw[->,thick] (-1.4,-1.2) to [out=90,in=-150] (-1.15,-1);
\draw[-,thick] (1,1) to [out=90,in=180] (1.2,1.4);
\draw[-,thick] (1.2,1.4) to [out=0,in=90] (1.4,1.2);
\draw[->,thick] (1.4,1.2) to [out=-90,in=30] (1.15,1);
\draw[-,thick] (1,-1) to [out=-90,in=180] (1.2,-1.4);
\draw[-,thick] (1.2,-1.4) to [out=0,in=-90] (1.4,-1.2);
\draw[->,thick] (1.4,-1.2) to [out=90,in=-30] (1.15,-1);
\draw[-,thick] (0,0) to [out=-60,in=60] (.15,-.5);
\draw[-,thick] (.15,-.5) to [out=-120,in=-60] (-.1,-.5);
\draw[->,thick] (-.1,-.5) to [out=120,in=-110] (0,-.1);
\end{tikzpicture}\,.
\end{equation}

Given a directed graph $D$ on $n$ vertices
and a partition $\lambda = (\lambda_1,\dotsc,\lambda_r) \vdash n$,
call a sequence $(H_1,\dotsc,H_r)$ of vertex-disjoint
subdigraphs of $D$
a {\em disjoint cyclic vertex cover of $D$ of type $\lambda$} if $H_j$ 
is isomorphic to the cycle graph $C_{\lambda_j}$ for all $j$.
For example, three disjoint cyclic vertex covers of the graph $\ngr(P)$
in (\ref{eq:ngr}) are
\begin{equation*}
\Bigg( \begin{tikzpicture}[scale=.7,baseline=-5]
\draw[fill] (.8,.5) circle (1mm); \node at (.8,1) {$2$};
\draw[fill] (.8,-.5) circle (1mm); \node at (.8,-1) {$1$};
\draw[fill] (1.6,0) circle (1mm); \node at (1.6,-.5) {$3$};
\draw[<-,thick] (.8,-.35) -- (.8,.5);
\draw[->,thick] (.8,-.5) -- (1.48,-.07);
\draw[<-,thick] (.93,.42) -- (1.6,0);
\end{tikzpicture}\ntnsp,
\begin{tikzpicture}[scale=.7,baseline=-5]
\draw[fill] (.8,.5) circle (1mm); \node at (.8,1) {$4$};
\draw[fill] (.8,-.5) circle (1mm); \node at (.8,-1) {$5$};
\draw[->,thick] (.8,-.5) to [out=120,in=-105] (.75,.4);
\draw[->,thick] (.8,.5) to [out=-60,in=75] (.85,-.4);
\end{tikzpicture}\, \Bigg),
\qquad
\Bigg( \begin{tikzpicture}[scale=.7,baseline=-5]
\draw[fill] (.8,.5) circle (1mm); \node at (.8,1) {$2$};
\draw[fill] (.8,-.5) circle (1mm); \node at (.8,-1) {$1$};
\draw[fill] (1.6,0) circle (1mm); \node at (1.6,-.5) {$3$};
\draw[->,thick] (.8,-.5) -- (.8,.35);
\draw[<-,thick] (.92,-.43) -- (1.6,0);
\draw[->,thick] (.8,.5) -- (1.48,.07);
\end{tikzpicture}\ntnsp,
\begin{tikzpicture}[scale=.7,baseline=-5]
\draw[fill] (0,0) circle (1mm); \node at (0,.5) {$4$};
\draw[-,thick] (0,0) to [out=-60,in=60] (.15,-.5);
\draw[-,thick] (.15,-.5) to [out=-120,in=-60] (-.1,-.5);
\draw[->,thick] (-.1,-.5) to [out=120,in=-110] (0,-.1);
\end{tikzpicture}\ntnsp,
\begin{tikzpicture}[scale=.7,baseline=-5]
\draw[fill] (0,0) circle (1mm); \node at (0,.5) {$5$};
\draw[-,thick] (0,0) to [out=-60,in=60] (.15,-.5);
\draw[-,thick] (.15,-.5) to [out=-120,in=-60] (-.1,-.5);
\draw[->,thick] (-.1,-.5) to [out=120,in=-110] (0,-.1);
\end{tikzpicture}\, \Bigg),
\qquad
\Bigg( \begin{tikzpicture}[scale=.7,baseline=-5]
\draw[fill] (.8,.5) circle (1mm); \node at (.8,1) {$2$};
\draw[fill] (.8,-.5) circle (1mm); \node at (.8,-1) {$1$};
\draw[fill] (1.6,0) circle (1mm); \node at (1.6,-.5) {$3$};
\draw[->,thick] (.8,-.5) -- (.8,.35);
\draw[<-,thick] (.92,-.43) -- (1.6,0);
\draw[->,thick] (.8,.5) -- (1.48,.07);
\end{tikzpicture}\ntnsp,
\begin{tikzpicture}[scale=.7,baseline=-5]
\draw[fill] (0,0) circle (1mm); \node at (0,.5) {$5$};
\draw[-,thick] (0,0) to [out=-60,in=60] (.15,-.5);
\draw[-,thick] (.15,-.5) to [out=-120,in=-60] (-.1,-.5);
\draw[->,thick] (-.1,-.5) to [out=120,in=-110] (0,-.1);
\end{tikzpicture}\ntnsp,
\begin{tikzpicture}[scale=.7,baseline=-5]
\draw[fill] (0,0) circle (1mm); \node at (0,.5) {$4$};
\draw[-,thick] (0,0) to [out=-60,in=60] (.15,-.5);
\draw[-,thick] (.15,-.5) to [out=-120,in=-60] (-.1,-.5);
\draw[->,thick] (-.1,-.5) to [out=120,in=-110] (0,-.1);
\end{tikzpicture}\, \Bigg).
\end{equation*}
These have types $32$ and $311$.
Observe that a collection of vertices does not completely specify a cycle,
as this example shows two different cycles on the vertices $\{1, 2, 3\}$.

To prove our results, we will use the transition matrix which
relates the elementary and power sum bases of $\Lambda_n$.
In particular,
\begin{equation}\label{eq:ptoe}
  p_n = \sum_{\mu \vdash n} (-1)^{n-\ell(\mu)} c_{\mu} e_\mu,  
\end{equation}
where $c_\mu$ equals the number of subgraphs of
the (labeled) cycle graph
\begin{equation*}
C_n = ([n],E), \qquad  E = \{(i,i+1) \,|\, 1 \leq i \leq n-1\} \cup \{ (n,1) \},
\end{equation*}
whose connected components are
paths on $\mu_1,\dotsc,\mu_k$ vertices.
Clearly such subgraphs correspond bijectively
to subsets $S \subseteq [n]$,
\begin{equation*}
  S \quad \longleftrightarrow \quad
  C_{n,S} = ([n], E_S) \text{ with } E_S = \{ (i,j) \in E \,|\, i \in S \},
\end{equation*}
and we define
$\mu(S)$ to be the weakly decreasing sequence of component cardinalities of
$C_{n,S}$.
We will also use the set of $P$-permutations
whose (cyclic) $P$-descent set contains $S$,
\begin{equation}\label{eq:bs}
  \mathcal B(S) = \{ w \in \sn \,|\, w_i >_P w_{i+1}
  \text{ (or } w_i = w_n >_P w_1 ) \text{ for all } i \in S \}.
  \end{equation}
\begin{lem}\label{l:bs}
  For any $n$-element poset $P$ and subset $S \subseteq [n]$,
  the permutations in $\mathcal B(S)$ correspond bijectively
  to column-strict $P$-tableaux of shape $\mu(S)$.  In particular, we have
    $|\mathcal B(S)| = \epsilon^{\mu(S)}(\inc(P))$.
\end{lem}
\begin{proof}
   Fix $w \in \mathcal B(S)$.  For
  each maximal interval
  $[i,j]$
  (mod $n$)
  with $i,\dotsc,j \in S$,
  we have the chain
  $w_{i} >_P \cdots >_P w_{j+1}$;
  for $i-1, i \not \in S$, we have the one-element chain $w_i$.
  Let $\mu = \mu(S)$ and insert the
    $\ell(\mu)
    = n - |S|$
  chains
  into the columns of a Young diagram of shape $\mu^\tr$
  to obtain a column-strict $P$-tableau. 
  For chains of equal cardinalities, fill the leftmost available
  column of the tableau with the leftmost available chain in $w$
  (considering $\cdots >_P w_n >_P w_1 >_P \cdots$
  to be the leftmost chain of all, if it exists).
  It is easy to see that this map is invertible.
\end{proof}
As an example of the above bijection,
consider the poset, subset, and $P$-permutation
\begin{equation}\label{eq:tableauxposet3}
  P =
\begin{tikzpicture}[scale=.7,baseline=5]
\draw[fill] (0,2) circle (1mm); \node at (-.5,2) {$7$};
\draw[fill] (.8,1.5) circle (1mm); \node at (1.3,1.5) {$6$};
\draw[fill] (0,1) circle (1mm); \node at (-.5,1) {$5$};
\draw[fill] (0,0) circle (1mm); \node at (-.5,0) {$3$};
\draw[fill] (0,-1) circle (1mm); \node at (-.5,-1) {$1$};
\draw[fill] (.8,.5) circle (1mm); \node at (1.3,.5) {$4$};
\draw[fill] (.8,-.5) circle (1mm); \node at (1.3,-.5) {$2$};
\draw[-,thick] (0,1) -- (0,2);
\draw[-,thick] (.8,.5) -- (0,2);
\draw[-,thick] (.8,.5) -- (.8,1.5);
\draw[-,thick] (0,0) -- (.8,1.5);
\draw[-,thick] (0,1) -- (0,0);
\draw[-,thick] (0,0) -- (0,-1);
\draw[-,thick] (0,1) -- (.8,-.5);
\draw[-,thick] (0,-1) -- (.8,.5);
\draw[-,thick] (.8,.5) -- (.8,-.5);
\end{tikzpicture},
\qquad 
S = \{1, 4, 5, 7 \},
\qquad
w = 5316427,
\end{equation}
with $w \in \mathcal B(\{1,4,5,7\})$ because
the cyclic $P$-descents of $w$ include $1, 4, 5, 7$:
\begin{equation*}
  w_1 >_P w_2, \quad w_4 >_P w_5, \quad w_5 >_P w_6, \quad w_7 >_P w_1.
\end{equation*}
Combining these 
$P$-descents to form chains (and collecting leftover $1$-chains), we have
\begin{equation*}
  w_7 >_P w_1 >_P w_2 = 753,\qquad
  w_3 = 1, \qquad
  w_4 >_P w_5 >_P w_6 = 642,\qquad 
\end{equation*}
which we can insert in order of weakly decreasing cardinality
into a column-strict $P$-tableau of shape $331^\tr = 322$
\begin{equation*}
  {\tableau[scY]{7,6|5,4|3,2,1}}\,,
\end{equation*}
where we have broken the tie between $3$-element chains
by inserting the leftmost chain (the one containing $w_1$) first.

Now we may interpret $\psi^\lambda(\inc(P))$ as follows.
\begin{thm}\label{t:psilambdaP}
  For any
  poset $P$ and partition
  $\lambda  = (\lambda_1,\dotsc,\lambda_r) \vdash |P|$,
  the number $\psi^\lambda(\inc(P))$ has the 
  combinatorial interpretations
  \begin{enumerate}
  \item $\#$ cyclically row-semistrict $P$-tableaux of shape $\lambda$,
  \item $\#$ $P$-record-free, row-semistrict $P$-tableaux of shape $\lambda$,
  \item $\#$ disjoint cyclic vertex covers of $\ngr(P)$ of type $\lambda$,
  \item $\#$ acyclic orientations $(O_1,\dotsc,O_r)$ of
    sequences $(\inc(P_{I_1}),\dotsc,\inc(P_{I_r})) \in \oisp_\lambda(\inc(P))$
    in which each oriented component $O_i$
    has exactly one source.
  \end{enumerate}
\end{thm}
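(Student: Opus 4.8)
The plan is to prove Theorem~\ref{t:psilambdaP} by the same strategy used for Theorem~\ref{t:etalambdaP}: first reduce the statement for general $\lambda$ to the case $\lambda = n$ via a factorization lemma, then establish each combinatorial interpretation in the single-row case and transport it back. The starting point is the identity (\ref{eq:htoe}) expressing $p_n$ in the elementary basis, together with Lemma~\ref{l:efactor}(2). Since $\psi^\lambda$ corresponds under Frobenius to $\frac{p_\lambda}{z_\lambda}\cdot z_\lambda$, and more importantly since $p_\lambda = p_{\lambda_1}\cdots p_{\lambda_r}$ and Lemma~\ref{l:efactor}(2) applies to any product of symmetric functions coming from smaller trace spaces, the first step is to record the factorization
\begin{equation*}
  \psi^\lambda(\inc(P)) = \sum_{(I_1,\dotsc,I_r)} \psi^{\lambda_1}(\inc(P_{I_1})) \cdots \psi^{\lambda_r}(\inc(P_{I_r})),
\end{equation*}
the sum over ordered set partitions of type $\lambda$, exactly parallel to (\ref{eq:etaid}). (One must be slightly careful about the $z_\lambda$-normalization: the relevant induction product is $\theta_1\otimes\theta_2 \upparrow$, and $\psi^\mu\otimes\psi^\nu\upparrow = \psi^{\lambda(\mu,\nu)}$ up to the multinomial factor accounting for how $z$ factors; I would check that the combinatorial count on the right side is already the correctly normalized quantity, which it is since we are counting genuine tableaux / subgraph structures, not weighting them.) This reduces everything to proving the four interpretations when $\lambda = n$.

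For the single-row case, the crux is the elementary expansion $\psi^n(\inc(P)) = \sum_\mu (-1)^{n-\ell(\mu)} c_\mu\, \epsilon^\mu(\inc(P))$ from (\ref{eq:htoe}), combined with the interpretation $\epsilon^\mu(\inc(P)) = \#$ column-strict $P$-tableaux of shape $\mu^\tr$ from (\ref{eq:epsilonlambdaP}). I would argue that $\epsilon^\mu(\inc(P))$ counts ordered set partitions $(I_1,\dotsc,I_k)$ of type $\mu$ into independent sets of $\inc(P)$, i.e., chains of $P$. Pairing this with the coefficient $c_\mu$, which counts ways to break the labeled cycle $C_n$ into disjoint paths of sizes $\mu_1,\dotsc,\mu_k$, one gets a signed count of pairs (a partition of the cycle into paths, a decoration of the pieces by chains). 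A sign-reversing involution — cancelling configurations by toggling one edge of $C_n$ — should collapse this to the unsigned count of \emph{ordered disjoint cycle covers of $\ngr(P)$ of type $n$}, namely the Hamiltonian cycles in $\ngr(P)$ up to the appropriate equivalence; this is interpretation (3) with $\lambda=n$. Then (1), (2), and (4) for $\lambda=n$ follow by producing explicit bijections: a Hamiltonian cycle in $\ngr(P)$, read around starting from its largest vertex, gives a cyclically row-semistrict $P$-permutation (interpretation (1)); a record-free row-semistrict $P$-permutation corresponds to the same data (interpretation (2), via the observation that a single-row row-semistrict tableau has no nontrivial $P$-record iff the cycle closes up without an excedance at the wrap-around, matching the cyclic condition); and interpretation (4) follows from the acyclic-orientation-to-permutation bijection of Theorem~\ref{t:etalambdaP}(3) restricted to orientations with a unique source, since a unique source corresponds exactly to the row being forced into a single "cyclic block."

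The main obstacle I anticipate is the sign-reversing involution establishing (3), i.e., showing $\sum_\mu(-1)^{n-\ell(\mu)}c_\mu(\#\text{chain-partitions of }P\text{ of type }\mu)$ equals the number of ordered disjoint cycle covers of $\ngr(P)$ of type $n$. The delicate point is that $\ngr(P)$ includes loops and all ordered pairs $(i,j)$ with $i\not>_P j$, so a "cyclic subgraph isomorphic to $C_k$" on a vertex set needs its edge directions to be consistent with $\ngr(P)$ — equivalently, the vertices can be cyclically ordered so each is $\not>_P$ the next. Matching the path-pieces cut from $C_n$ (which are unordered fragments) with directed cyclic arrangements compatible with $P$, while making the signs telescope, requires choosing the involution carefully (e.g., find the lexicographically least edge of $C_n$ whose removal/insertion does not destroy the chain structure of the piece it touches, and toggle it). Once this involution is pinned down, the remaining bijections in parts (1), (2), (4) are routine bookkeeping of the kind already carried out in the proof of Theorem~\ref{t:etalambdaP}, and the factorization step handles the passage from $\lambda = n$ to general $\lambda$ uniformly.
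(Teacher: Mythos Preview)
Your reduction via the factorization $\psi^\lambda(\inc(P)) = \sum_{(I_1,\dots,I_r)} \prod_i \psi^{\lambda_i}(\inc(P_{I_i}))$ and your approach to the single-row case via the expansion $\psi^n = \sum_\mu (-1)^{n-\ell(\mu)} c_\mu \epsilon^\mu$ are exactly what the paper does. The paper phrases the latter as a direct inclusion--exclusion (over subsets $S \subseteq [n]$ of positions where a cyclic $P$-descent is forced) rather than as a sign-reversing involution, which makes (1) the primary target and renders (3) a one-line bijection from (1) afterward; the content is the same, and the paper's framing avoids the ``delicate point'' you flag.

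The genuine gap is in your treatment of (2). Your parenthetical claim --- that a row-semistrict $P$-permutation is $P$-record-free if and only if it is cyclically row-semistrict --- is false. For $P$ on $\{1,2,3\}$ with the single relation $1 <_P 3$, the row-semistrict permutation $123$ is record-free (neither $2$ nor $3$ exceeds every preceding entry in $P$) but not cyclically row-semistrict (since $3 >_P 1$ at the wrap-around), while $132$ is cyclically row-semistrict but not record-free ($3$ is a nontrivial record). The two sets have the same cardinality, but they are \emph{not} related by the identification you propose. The paper does not attempt any direct bijection here: it instead invokes Stanley's theorem (\emph{Adv.\ Math.}\ \textbf{111} (1995), Thm.~3.3) that $\psi^n(\inc(P))$ equals the number of acyclic orientations of $\inc(P)$ with exactly one sink, reverses edges, and applies the acyclic-orientation $\leftrightarrow$ row-semistrict-tableau bijection of Theorem~\ref{t:etalambdaP} (under which sources correspond to records) to obtain (2). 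Part (4) follows from the same Stanley result. Your route from (4) back to (2) via the Theorem~\ref{t:etalambdaP} bijection is sound, but you need Stanley's external result --- or an independent argument of that strength --- to anchor one of (2) or (4); the ``iff'' you sketch does not do the job.
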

\begin{proof}
  Since $p_\lambda = p_{\lambda_1} \ntnsp\cdots p_{\lambda_r}$, Lemma~\ref{l:efactor}
  implies that
  power sum traces satisfy
  \begin{equation}\label{eq:psiid}
    \psi^\lambda(\inc(P)) = \ntksp \sum_{(I_1,\dotsc,I_r)} \ntksp
    \psi^{\lambda_1}(\inc(P_{I_1})) \cdots \psi^{\lambda_r}(\inc(P_{I_r})),
  \end{equation}
  where the sum is over all ordered set partitions of $|P|$ of type $\lambda$.

  \noindent (1) We claim that for any $n$-element poset $P$, 
  $\psi^n(\inc(P))$ is the number of cyclically row-semistrict
  $P$-permutations ($P$-descent-free $w_1 \cdots w_n$ with
  $w_n \not >_P w_1$).
  To see this, let $a$ be the number of such $P$-permutations
  and define
  $\mathcal B(S)$ as in (\ref{eq:bs}).
  By the principle of inclusion/exclusion and Lemma~\ref{l:bs},
  the cardinalities $a$ and $|\mathcal B(S)|$ are related by
  \begin{equation*}
    a = \sum_{S \subseteq [n]} (-1)^{|S|} |\mathcal B(S)| 
    = \sum_{S \subseteq [n]} (-1)^{|n - \ell(\mu(S))|} \epsilon^{\mu(S)}(\inc(P)).
  \end{equation*}
  By (\ref{eq:ptoe})
  the number of distinct subsets
  $T \subseteq [n]$
  satisfying $\mu(T) = \mu(S)$
  is $c_{\mu(S)}$. Therefore we have
  \begin{equation*}
    a
    = \sum_{\mu \vdash n}(-1)^{n - \ell(\mu)}c_{\mu} \epsilon^\mu(\inc(P))
    = \psi^n(\inc(P)),
  \end{equation*}
  as desired.

  \noindent (2) Stanley~\cite[Thm.\,3.3]{StanSymm} showed that
  $\phi^n(P) = \psi^n(P)$ equals the number of acyclic orientations of
  $\inc(P)$ having exactly one sink.  Reversing all edges in such an
  orientation and applying the bijection at the end of
  the proof of Theorem~\ref{t:etalambdaP},
  we have that $\psi^n(P)$ equals the number
  of $P$-record-free $P$-descent-free $P$-permutations.
  Then by (\ref{eq:psiid}) we have the desired result.
  
  \noindent (3) Let $U$ be a cyclically row-semistrict $P$-tableau of
  shape $\lambda$.  Each pair of horizontally
  adjacent entries $(U_{i,j}, U_{i,j+1})$ (or $(U_{i,\lambda_i}, U_{i,1})$) in $U$
  corresponds to an edge in $\ngr(P)$, and the row $U_i$ corresponds to a
  cycle.  Removing all other edges from
  $\ngr(P)$ and listing the cycles in order of their corresponding rows of $U$,
  we obtain the desired disjoint cyclic vertex cover.

  \noindent (4) As described above, $\psi^n(P)$ equals the number of
  acyclic orientations of $\inc(P)$ having exactly one source.
  Now (\ref{eq:psiid}) gives the desired result.
\end{proof}



\ssec{Monomial traces / elementary coefficients of $X_{\inc(P)}$}
While $\phi^\lambda(\inc(P))$ is negative for some posets $P$,
Stembridge and Stanley conjectured~\cite[Conj.\,5.5]{StanStemIJT}
that for $(\mathbf3 + \mathbf1$)-free posets $P$
we have
\begin{equation}\label{eq:phipos}
  \phi^\lambda(\inc(P)) \geq 0.
\end{equation}
By \cite[Thm.\,5.1]{GPModRel}, this conjecture is equivalent to the
assertion that (\ref{eq:phipos}) holds when $P$ is a unit interval order.
Using this fact, we may state special cases of the conjecture which are known
to be true.
In particular, the following 
conditions on $\lambda$ and/or $P$ imply
(\ref{eq:phipos}).
\begin{enumerate}
\item 
  $P$ $\mathbf3$-free~\cite[Thm.\,5.3]{GPModRel}, \cite[Cor.\,3.6]{StanSymm}.
\item $P$ a $\mathbf4$-free unit interval order~\cite[Thm.\,1.8]{CHPos}.
\item 
  $\lambda$ a {\em rectangular partition}, i.e., 
  $\lambda_1 = \lambda_2 = \cdots = k$,
  and $P$ ($\mathbf3 + \mathbf 1$)-free
  \cite[Thm.\,2.8]{StemConj}.
\item $P$ a unit interval order with a ($\lambda_1 + 1$)-element
  antichain~\cite[Thm.\,3.7, Prop.\,10.1]{CHSSkanEKL}.
\item $P$ a unit interval order with component sizes of $\inc(P)$ not refining
  $\lambda$~\cite[Prop.\,10.2]{CHSSkanEKL}.
\item 
  $\lambda_1 \leq 2$ and $P$ ($\mathbf3 + \mathbf 1$)-free
  \cite[Thm.\,10.3]{CHSSkanEKL}.
\item 
  $P$ defined on $[n]$ by
  $i <_P j$ if $i + 1 < j$
  (as integers)
  \cite[p.\,242]{CarlitzSVRiseFallLevel},
  \cite[Prop.\,5.3]{StanSymm}.
\item 
  $P$ defined on $[n]$ by
  $i <_P j$ if $i + 2 < j$ (as integers) \cite{DahlbergNewFormula}.
\item
  $P$ defined on $[n]$ by
  $i <_P j$ if $i + n - 3 < j$ (as integers) \cite{SWachsChromQF}.
\end{enumerate}
Conditions (4) and (5) above more specifically imply that we have
$\phi^\lambda(\inc(P)) = 0$.
Another related result concerns sums of monomial
traces~\cite[Thm.\,3.3]{StanSymm}.
\begin{prop}\label{p:stanksources}
  For all graphs $G$ on $n$ vertices we have 
  \begin{equation*}
    \sumsb{\lambda\vdash n\\ \ell(\lambda) = k} \phi^\lambda(G)
    = \# \text{ acyclic orientations of $G$ having $k$ sources},
\end{equation*}
and for all $n$-element posets $P$ we have
  \begin{equation*}
    \sumsb{\lambda\vdash n\\ \ell(\lambda) = k} \phi^\lambda(\inc(P))
    = \# \text{ $P$-descent-free $P$-permutations $w$ with $k$ $P$-records}.
  \end{equation*}
\end{prop}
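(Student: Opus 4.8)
The plan is to reduce each identity to something already in hand: the graph identity to Stanley's theorem restated through the elementary expansion (\ref{eq:sumXG}), and the poset identity to the graph identity through the acyclic-orientation/$P$-permutation bijection constructed in the proof of Theorem~\ref{t:etalambdaP}.

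For the graph identity I would start from (\ref{eq:sumXG}), which gives $X_G = \sum_{\lambda \vdash n} \phi^\lambda(G) e_\lambda$. Thus $\sum_{\ell(\lambda) = k} \phi^\lambda(G)$ is precisely the sum of the elementary coefficients of $X_G$ over partitions with $k$ parts, equivalently the coefficient of $t^k$ in the image of $X_G$ under the ring homomorphism $\Lambda \rightarrow \mathbb Z[t]$ sending every $e_j$ to $t$. By \cite[Thm.\,3.3]{StanSymm} this quantity equals the number of acyclic orientations of $G$ having $k$ sources; since reversing all arcs is an involution on acyclic orientations interchanging sources and sinks, it does not matter which of the two Stanley's statement is phrased in terms of. This establishes the first displayed equation.

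For the poset identity I would take $G = \inc(P)$ and apply the $\lambda = (n)$ case of the bijection in the proof of Theorem~\ref{t:etalambdaP}, a bijection between acyclic orientations of $\inc(P)$ and $P$-descent-free $P$-permutations $w = w_1 \cdots w_n$, under which the arc convention is that an $\inc(P)$-edge $\{w_i, w_j\}$ with $i < j$ is oriented from $w_i$ to $w_j$. It then remains to check that an acyclic orientation $O$ with $k$ sources corresponds to a $P$-permutation with $k$ $P$-records, and for this I would prove that position $j$ is a $P$-record of $w$ if and only if $w_j$ is a source of $O$. One direction is immediate: if $w_1, \dotsc, w_{j-1} <_P w_j$, then no earlier entry is incomparable to $w_j$, so $w_j$ has no in-arc and is a source. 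Conversely, suppose $w_j$ is a source of $O$, so that every earlier entry is comparable to $w_j$. If some $w_i >_P w_j$ with $i < j$, choose the largest such $i$; then $i \le j-2$, because $w_{j-1} >_P w_j$ would be a $P$-descent, and $w_{i+1}$, being comparable to $w_j$ but not $>_P w_j$ by maximality of $i$, satisfies $w_{i+1} <_P w_j$; hence $w_i >_P w_j >_P w_{i+1}$ is a $P$-descent, a contradiction. So every earlier entry is $<_P w_j$ and $j$ is a $P$-record, which finishes the count and the proof.

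I expect the genuine content to lie in this last verification and nowhere else: it is the only place the $P$-descent-free hypothesis enters, and it must be used carefully because incomparability in $P$ need not be transitive, so one cannot simply propagate comparabilities along $w$ — hence the ``largest offending index'' argument. The only other point needing attention is bookkeeping, namely matching the exact form in which \cite[Thm.\,3.3]{StanSymm} is stated (augmented monomials, and possibly sinks rather than sources) to the elementary-coefficient and source language used here, which is routine given (\ref{eq:sumXG}) and the sink--source involution.
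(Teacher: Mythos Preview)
Your proposal is correct. The paper does not actually prove Proposition~\ref{p:stanksources}: it simply attributes the result to \cite[Thm.\,3.3]{StanSymm} and states it without argument. Your write-up supplies exactly the details one would expect---citing Stanley for the graph statement and then transporting it to posets via the acyclic-orientation/$P$-permutation bijection from the proof of Theorem~\ref{t:etalambdaP}.

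The one substantive thing you add, which the paper never spells out, is the verification that under this bijection a vertex $w_j$ is a source of $O$ if and only if position $j$ is a $P$-record. The paper uses this correspondence implicitly (for instance in the proof of Theorem~\ref{t:psilambdaP}\,(2), where a unique source is matched with a unique $P$-record), but never justifies it. Your ``largest offending index'' argument is correct and is the right way to handle the failure of transitivity of incomparability: the point is that comparability of every earlier entry to $w_j$, combined with $P$-descent-freeness, forces all of them to lie below $w_j$. So your proof is both sound and slightly more complete than what the paper offers.
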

Since
$\phi^{1^n} = \epsilon^n$ and $\phi^n = \psi^n$, it is tempting
to conjecture a formula for $\phi^\lambda(\inc(P))$ which combines
column-strictness of Equation~(\ref{eq:epsilonlambdaP}) with
one of the conditions of Theorem~\ref{t:psilambdaP}.
Two obvious combinations do not in general give correct formulas.
For example, the following poset $P$ and
monomial trace evaluations
\begin{equation*}
  P =
\begin{tikzpicture}[scale=.7,baseline=-5]
\draw[fill] (0,1) circle (1mm); \node at (-.5,1) {$5$};
\draw[fill] (0,0) circle (1mm); \node at (-.5,0) {$3$};
\draw[fill] (0,-1) circle (1mm); \node at (-.5,-1) {$1$};
\draw[fill] (.8,.5) circle (1mm); \node at (1.3,.5) {$4$};
\draw[fill] (.8,-.5) circle (1mm); \node at (1.3,-.5) {$2$};
\draw[-,thick] (0,1) -- (0,0);
\draw[-,thick] (0,0) -- (0,-1);
\draw[-,thick] (0,1) -- (.8,-.5);
\draw[-,thick] (0,-1) -- (.8,.5);
\draw[-,thick] (.8,.5) -- (.8,-.5);
\end{tikzpicture},
\qquad \qquad
\begin{gathered}
  \phi^5(\inc(P))=5, \quad \phi^{41}(\inc(P))=3,\\
  \phi^{32}(\inc(P))=7, \quad \phi^{221}(\inc(P))=1,\\
  \phi^{311}(\inc(P))=
  \phi^{2111}(\inc(P))=
  \phi^{11111}(\inc(P))= 0
  \end{gathered}
\end{equation*}
are not consistent with
the number of
standard, cyclically row-semistrict
$P$-tableaux of shape $32$
\begin{equation*}
  \tableau[scY]{4,5|1,3,2}\,\raisebox{-4mm},\quad
  \tableau[scY]{4,5|2,1,3}\,\raisebox{-4mm},\quad
  \tableau[scY]{5,4|2,1,3}\,\raisebox{-4mm},\quad
  \tableau[scY]{5,4|3,2,1}\,\raisebox{-4mm},
\end{equation*}
or the number of standard,
$P$-record-free 
$P$-tableaux of shape $32$
\begin{equation*}
  \tableau[scY]{4,5|1,2,3}\,\raisebox{-4mm},\quad
  \tableau[scY]{5,4|1,2,3}\,\raisebox{-4mm},\quad
  \tableau[scY]{4,5|2,1,3}\,\raisebox{-4mm},\quad
  \tableau[scY]{5,4|2,1,3}\,\raisebox{-4mm},\quad
  \tableau[scY]{5,4|3,2,1}\,\raisebox{-4mm}.
\end{equation*}
%

The author has found that for $n \leq 5$,
the sets of analogous tableaux for $n$-element posets
have cardinalities no greater than the true values of $\phi^\lambda(\inc(P))$.
This suggests the following question.
\begin{quest}
  Do we have for all unit interval orders $P$
  and all partitions $\lambda \vdash |P|$,
  that $\phi^\lambda(\inc(P))$ is greater than or equal to
  \begin{enumerate}
  \item the number of standard, cyclically row-semistrict $P$-tableaux of shape $\lambda$?
  \item the number of standard, $P$-record-free $P$-tableaux of shape $\lambda$?
  \end{enumerate}
\end{quest}

\ssec{Fundamental expansion of $X_{\inc(P)}$}

We remark that for any
graph $G$,
there are known combinatorial interpretations
for the coefficients arising in the fundamental expansions of
$X_G$ and $\omega X_G$.
These are easiest to express in the special case that $G$
is the incomparability graph of an $n$-element poset $P$.
Writing
\begin{equation*}
  \begin{aligned}
    X_{\inc(P)} &= \sum_{S \subseteq [n-1]} \xi^S(\inc(P)) F_{n,[n-1]\ssm S},\\
    \omega X_{\inc(P)} &= \sum_{S \subseteq [n-1]} \xi^S(\inc(P)) F_{n,S},
  \end{aligned}
\end{equation*}
we have that $\xi^S(\inc(P))$ is the number of $P$-permutations
with $P$-descent set $S$~\cite[Cor.\,2]{ChowDes}.
For
combinatorial interpretations corresponding to an arbitrary
graph $G$, see \cite[Cor.\,1]{ChowDes}.



\ssec{Trace identities}

Symmetric function identities,
Lemma~\ref{l:efactor}, and
the combinatorial interpretations stated in
Equation~(\ref{eq:epsilonlambdaP}) -- Theorem~\ref{t:psilambdaP}
lead to some identities relating $P$-permutations
to pairs of
subposet permutations.
For instance, the first identity in the following result implies that
the number of ways to create a row-semistrict $P$-permutation
and circle one element
equals the number of
$P$-permutations $w_1 \cdots w_n$ with
$w_1 \cdots w_i$ 
a cyclically row-semistrict $P_J$-permutation
for some $i$-element set $J \subseteq [n]$ ($0 < i \leq n$),
and $w_{i+1} \cdots w_n$
a row-semistrict $P_{\tnsp \olj\,}$-permutation.
  \begin{cor}\label{c:traceid}
  Let $G$ be a graph on $n$ vertices. We have
  \begin{equation*}
    \begin{gathered}
    n \eta^n(G) = \sum_{i=1}^n \sumsb{J \subseteq [n]\\|J|=i}
    \psi^i(G_{\ntnsp J})\eta^{n-i}(G_{\olj}), \\
    n \epsilon^n(G) = \sum_{i=1}^n \sumsb{J \subseteq [n]\\|J|=i}
    (-1)^{i-1} \psi^i(G_{\ntnsp J})\epsilon^{n-i}(G_{\olj}),\\
    \sum_{i=0}^n \sumsb{J \subseteq [n]\\|J|=i}
    (-1)^i \epsilon^i(G_{\ntnsp J})\eta^{n-i}(G_{\olj}) = 0.
    \end{gathered}
  \end{equation*}
  \end{cor}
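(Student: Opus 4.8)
The plan is to recognize each of the three identities as the image, under the inverse Frobenius isomorphism, of a classical symmetric function identity in $\Lambda_n$, evaluated at $G$ by means of Lemma~\ref{l:efactor}~(2). No new combinatorics is involved; everything reduces to known facts about $\Lambda_n$ together with the factorization already established.

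First I would record the three classical identities
\begin{equation*}
  n h_n = \sum_{i=1}^n p_i h_{n-i}, \qquad
  n e_n = \sum_{i=1}^n (-1)^{i-1} p_i e_{n-i}, \qquad
  \sum_{i=0}^n (-1)^i e_i h_{n-i} = 0,
\end{equation*}
the first two obtained by logarithmic differentiation of the generating functions $H(t) = \prod_j (1 - x_j t)^{-1}$ and $E(t) = \prod_j (1 + x_j t)$, and the third from the relation $E(t) H(-t) = 1$ (see \cite[\S I.2]{M1}). Under the Frobenius isomorphism we have $h_m \leftrightarrow \eta^m$, $e_m \leftrightarrow \epsilon^m$, $p_m \leftrightarrow \psi^m$, and, as recorded in Lemma~\ref{l:efactor}~(2), a product $t_1 t_2$ with $t_1 \in \Lambda_i$ and $t_2 \in \Lambda_{n-i}$ corresponds to the induced trace $\theta_1 \otimes \theta_2 \upparrow_{\mfs i \times \mfs{n-i}}^{\sn}$ of the traces corresponding to $t_1$ and $t_2$. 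Thus the three displayed identities pull back to identities in $\trspace n$; for example the first becomes $n\eta^n = \sum_{i=1}^n \psi^i \otimes \eta^{n-i}\upparrow$.

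Finally I would apply the linear functional $\theta \mapsto \theta(G)$, which is well defined on $\trspace n$ by Proposition~\ref{p:everysymmfn} and Observation~\ref{o:yyx}, to each of these trace identities. By Lemma~\ref{l:efactor}~(2) the evaluation $(\psi^i \otimes \eta^{n-i}\upparrow)(G)$ equals $\sum_{|J| = i} \psi^i(G_{\ntnsp J})\,\eta^{n-i}(G_{\olj})$, and likewise with $\epsilon$ in place of either factor; substituting these sums yields exactly the three identities of the corollary, the boundary terms $i = 0$ and $i = n$ in the third identity being handled by the cases $k = 0$, $k = n$ of Lemma~\ref{l:efactor}~(2). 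This corollary has no real obstacle: the only thing requiring attention is keeping the signs $(-1)^{i-1}$, $(-1)^i$ and the range of the summation index straight when passing between the symmetric function identities and their trace counterparts.
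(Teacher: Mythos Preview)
Your proposal is correct and follows essentially the same route as the paper's own proof: both invoke the three classical identities $nh_n = \sum p_i h_{n-i}$, $ne_n = \sum (-1)^{i-1} p_i e_{n-i}$, and $\sum (-1)^i e_i h_{n-i} = 0$, and both apply Lemma~\ref{l:efactor}(2) termwise to pass from products in $\Lambda_n$ to sums over subsets $J$. Your write-up is simply more explicit about the intermediate step through $\trspace n$ and the boundary terms, which is fine.
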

  \begin{proof}
    Applying Lemma~\ref{l:efactor} (2) to the symmetric function identities
    \begin{equation*}
        nh_n = \sum_{i=1}^n p_i h_{n-i}, \qquad
        ne_n = \sum_{i=1}^n (-1)^{i-1}p_i e_{n-i}, \qquad
        \sum_{i=0}^n e_i h_{n-i} = 0,
    \end{equation*}
    we obtain the claimed graph identities.
  \end{proof}
  
\section{Applications to total nonnegativity}\label{s:tnn}

Nonnegative expansions of chromatic symmetric functions in
the standard bases
are closely
related to
functions of totally nonnegative matrices.
We will make this relationship precise in Corollary~\ref{c:tnnimplications}.

Call a real $n \times n$ matrix $A = (a_{i,j})$ {\em totally nonnegative} if
for each pair $(I,J)$ of subsets of $[n]$,
the square submatrix $A_{I,J} \defeq (a_{i,j})_{i\in I, j\in J}$
satisfies $\det(A_{I,J}) \geq 0$. 
Such matrices are closely related to directed graphs called planar networks.
Define a (nonnegative weighted)
{\em planar network of order $n$} to be a directed, planar, acyclic
digraph $D = (V,E)$
which can be embedded in a disc so that $2n$ distinguished vertices
labeled clockwise as $s_1,\dotsc,s_n,t_n,\dotsc,t_1$
lie on the boundary of the disc,
with a nonnegative real {\em weight} $c_{u,v}$
assigned to each edge $(u,v) \in E$.
We may assume that $s_1,\dotsc,s_n$, called {\em sources}, have indegree $0$
and that $t_n,\dotsc,t_1$, called {\em sinks}, have outdegree $0$.
To every source-to-sink path, we associate a weight equal to the product
of weights of its edges, and we define the {\em path matrix}
$A = A(D) = (a_{i,j})_{i,j \in [n]}$ by
setting $a_{i,j}$ equal to the sum of weights of all paths from $s_i$ to $t_j$.
For example, we have the following planar network $D$ of order $3$,
in which unlabeled edges have weight $1$,
and its path matrix $A$.
\begin{equation}\label{eq:planarnet}
    D = \ntnsp
\begin{tikzpicture}[scale=.7,baseline=-5]
\draw[fill] (0,2) circle (1mm); \node at (-.5,2) {$s_3$};
\draw[fill] (0,0) circle (1mm); \node at (-.5,0) {$s_2$};
\draw[fill] (0,-2) circle (1mm); \node at (-.5,-2) {$s_1$};
\draw[fill] (3,2) circle (1mm); \node at (3,1.6) {$v_3$};
\draw[fill] (2,0) circle (1mm); \node at (2,-.35) {$v_2$};
\draw[fill] (2,-2) circle (1mm); \node at (2,-2.35) {$v_1$};
\draw[fill] (4,0) circle (1mm); \node at (3.85,0.4) {$v_4$};
\draw[fill] (6,2) circle (1mm); \node at (6.5,2) {$t_3$};
\draw[fill] (6,0) circle (1mm); \node at (6.5,0) {$t_2$};
\draw[fill] (6,-2) circle (1mm); \node at (6.5,-2) {$t_1$};
\draw[->,thick] (0,2) -- (2.85,2);
\draw[->,thick] (0,2) -- (1.88,0.07);
\node at (4.8,1.25) {$2$};
\node at (1.2,-.75) {$3$};
\node at (2.7,-.75) {$4$};
\node at (3,.35) {$5$};
\node at (4.5,2.35) {$6$};
\node at (4.5,-1.15) {$7$};
\draw[->,thick] (0,0) -- (1.85,0);
\draw[->,thick] (0,0) -- (1.88,-1.93);
\draw[->,thick] (0,-2) -- (1.85,-2);
\draw[->,thick] (3,2) -- (5.85,2);
\draw[->,thick] (2,0) -- (3.85,0);
\draw[->,thick] (4,0) -- (5.88,1.93);
\draw[->,thick] (4,0) -- (5.85,0);
\draw[->,thick] (2,-2) -- (3.88,-.07);
\draw[->,thick] (2,-2) -- (5.88,-.07);
\draw[->,thick] (2,-2) -- (5.85,-2);
\end{tikzpicture},
\qquad
A = \begin{bmatrix}
  1 & 11 & 8 \\
  3 & 38 & 34 \\
  0 & 5 & 16
\end{bmatrix}\ntnsp.
\end{equation}

A result often attributed to Lindstr\"om
~\cite{LinVrep} but proved earlier by Karlin and McGregor~\cite{KMG}
asserts the total nonnegativity of such a matrix.
\begin{thm}\label{t:lin}
  The path matrix $A$ of a nonnegative weighted planar network $D$ of order $n$
  is totally nonnegative.  Moreover,
  the nonnegative number $\det(A)$ equals
  \begin{equation*}
    \sum_\pi \wgt(\pi),
  \end{equation*}
  where the sum is over all families $\pi = (\pi_1,\dotsc,\pi_n)$
  of pairwise nonintersecting paths in $D$,
  with $\pi_i$ a path from $s_i$ to $t_i$ for $i = 1,\dotsc,n$, and where
  \begin{equation}\label{eq:wgt}
    \wgt(\pi) \defeq \wgt(\pi_1) \cdots \wgt(\pi_n).
    \end{equation}
\end{thm}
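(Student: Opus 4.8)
The plan is to prove both assertions at once by the classical Lindström--Gessel--Viennot sign-reversing involution. Fix subsets $I = \{i_1 < \dotsb < i_k\}$ and $J = \{j_1 < \dotsb < j_k\}$ of $[n]$ and expand the minor by the Leibniz formula,
\[
  \det(A_{I,J}) = \sum_{\sigma \in \mfs{k}} \sgn(\sigma)\, a_{i_1, j_{\sigma(1)}} \cdots a_{i_k, j_{\sigma(k)}}.
\]
Since $D$ is acyclic there are only finitely many paths, and by definition $a_{i,j}$ is the sum of $\wgt(\pi)$ over all directed paths $\pi$ from $s_i$ to $t_j$. Distributing the products therefore rewrites the right-hand side as a signed weighted sum over all tuples $\pi = (\pi_1,\dotsc,\pi_k)$ in which $\pi_\ell$ is a path from $s_{i_\ell}$ to $t_{j_{\sigma_\pi(\ell)}}$ for some permutation $\sigma_\pi \in \mfs{k}$, the contribution of $\pi$ being $\sgn(\sigma_\pi)\,\wgt(\pi_1)\cdots\wgt(\pi_k)$.

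Next I would build the involution. Call such a tuple \emph{intersecting} if two of its paths share a vertex, and on the set of intersecting tuples define $\phi$ as follows: let $\ell$ be least with $\pi_\ell$ meeting another path; traversing $\pi_\ell$ from $s_{i_\ell}$, let $v$ be the first vertex of $\pi_\ell$ lying on another path, and let $m \neq \ell$ be least with $v \in \pi_m$; then interchange the portions of $\pi_\ell$ and $\pi_m$ strictly after $v$ to obtain $\phi(\pi)$. This leaves the multiset of edges used unchanged, hence preserves $\wgt(\pi_1)\cdots\wgt(\pi_k)$, while it changes $\sigma_\pi$ by the transposition $(\ell\ m)$ and so flips the sign. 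The point to check is that the triple $(\ell,v,m)$ is recovered unchanged from $\phi(\pi)$: the set of pairwise-shared vertices is invariant under the tail swap, as is the first shared vertex along the $\ell$-th path (its initial segment up to $v$ is untouched), and minimality of $\ell$ is preserved because paths of smaller index are untouched and the vertex set $\pi_\ell \cup \pi_m$ is unchanged. Thus $\phi$ is a fixed-point-free, sign-reversing involution, all intersecting tuples cancel in pairs, and
\[
  \det(A_{I,J}) = \sum_{\pi} \sgn(\sigma_\pi)\,\wgt(\pi_1)\cdots\wgt(\pi_k),
\]
the sum now over vertex-disjoint tuples only.

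Finally I would invoke planarity: because $D$ embeds in a disc with boundary vertices in cyclic order $s_1,\dotsc,s_n,t_n,\dotsc,t_1$, a Jordan-curve argument shows that a family of pairwise vertex-disjoint $s$-to-$t$ paths cannot cross, so $\sigma_\pi$ must be the identity and $\pi_\ell$ runs from $s_{i_\ell}$ to $t_{j_\ell}$. Hence every surviving tuple contributes $+\wgt(\pi_1)\cdots\wgt(\pi_k) \geq 0$, giving $\det(A_{I,J}) \geq 0$ and the total nonnegativity of $A$; taking $I = J = [n]$ and recalling the definition (\ref{eq:wgt}) of $\wgt(\pi)$ yields the displayed formula for $\det(A)$. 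The step I expect to be the main obstacle is the careful verification that $\phi$ is well defined and an involution in all cases --- especially when three or more of the $\pi_\ell$ pass through the distinguished vertex $v$, and in confirming that the canonical minimal choices are exactly those reconstructed from $\phi(\pi)$; the no-crossing consequence of the disc embedding is the other delicate point, but it is standard and may be cited.
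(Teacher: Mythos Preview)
The paper does not actually prove this theorem; it is stated with attribution to Karlin--McGregor and Lindstr\"om and then used as a black box. Your argument is the standard sign-reversing involution proof and is correct, so there is nothing to compare against here beyond noting that your write-up supplies what the paper deliberately omits.
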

Since each submatrix of a totally nonnegative matrix is itself totally
nonnegative, this result gives a combinatorial interpretation of
the nonnegative numbers $\det(A_{I,J})$
as well:
$\det(A_{I,J})$ is the sum of weights of all nonintersecting path families
in $D$ from sources indexed by $I$ to sinks indexed by $J$
(assuming $|I| = |J|$).
For example, in (\ref{eq:planarnet}), we have $\det(A) = 30$,
and the one nonintersecting path family in $D$
from $\{s_1, s_2, s_3\}$ to $\{t_1, t_2, t_3 \}$
has weight
$(1\cdot 1)(1 \cdot 5 \cdot 1)(1 \cdot 6) = 30$.
Also, we have $\det(A_{13,23}) = 136$ and the three nonintersecting path families
from $\{s_1,s_3\}$ to $\{t_2, t_3\}$ have weights
$(1 \cdot 7)(1 \cdot 5 \cdot 2)$,
$(1 \cdot 7)(1 \cdot 6)$, and
$(1 \cdot 4 \cdot 1)(1 \cdot 6)$, which sum to $136$.

The converse of Theorem~\ref{t:lin} is true as well.
That is,
path matrices are essentially the only examples of totally nonnegative
matrices
~\cite{BrentiCTP}, \cite{CryerProp}, \cite{loewner}, \cite{WReduction}.
%
\begin{thm}\label{t:tnnconv}
  For each $n \times n$ totally nonnegative matrix $A$, there exists
  a nonnegative weighted planar network $D$ of order $n$
  whose path matrix is $A$.
\end{thm}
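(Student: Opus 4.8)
The plan is to deduce the statement from the classical factorization theorem of Loewner and Whitney — every totally nonnegative matrix is a finite product of \emph{elementary} totally nonnegative matrices — together with the observation that each elementary factor is the path matrix of a trivial planar network and that gluing planar networks multiplies path matrices.

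First I would record the \emph{concatenation principle}. Given planar networks $D$, $D'$ of order $n$, let $D \ast D'$ be obtained by identifying the sinks $t_1,\dots,t_n$ of $D$ with the sources $s'_1,\dots,s'_n$ of $D'$; this is again a planar acyclic digraph of order $n$ (the distinguished vertices being the sources of $D$ and the sinks of $D'$). Every directed path from a source of $D$ to a sink of $D'$ meets the seam $\{t_1 = s'_1, \dots, t_n = s'_n\}$ exactly once — it must cross it to reach the $D'$ half, and it cannot re-enter a seam vertex since those are sources of $D'$ — so each such path factors uniquely as a path in $D$ followed by a path in $D'$. Summing weight-products over the middle vertex gives $A(D \ast D') = A(D)\,A(D')$. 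Hence it suffices to realize a set of generators of the multiplicative monoid of totally nonnegative matrices as path matrices.

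Next I would exhibit the elementary networks: for a scalar $c \ge 0$, (i) $n$ disjoint horizontal wires with the $i$th carrying weight $c$ and the rest weight $1$ has path matrix $\operatorname{diag}(1,\dots,1,c,1,\dots,1)$; (ii) $n$ horizontal wires together with one additional edge of weight $c$ joining wire $i$ to wire $i+1$ has path matrix $I + cE_{i,i+1}$; (iii) the mirror image of (ii) has path matrix $I + cE_{i+1,i}$. Planarity is immediate and each matrix is plainly totally nonnegative, so by Theorem~\ref{t:lin} this is consistent.

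The substantive step — which I would take from \cite{loewner}, \cite{WReduction} (see also \cite{CryerProp}, \cite{BrentiCTP}) rather than reprove from scratch — is the factorization: every $n \times n$ totally nonnegative $A$ is a finite product of matrices of types (i)--(iii). For \emph{totally positive} $A$ this is constructive: take the $LDU$ decomposition, with $L$ lower-unitriangular, $D$ a positive diagonal matrix, and $U$ upper-unitriangular; one checks $L$ and $U$ are again totally nonnegative and that the Gaussian-elimination parameters are nonnegative ratios of minors of $A$, then factors $L$ into type-(iii) and $U$ into type-(ii) elementaries by induction on $n$, peeling off one Jacobi generator at a time. For a general totally nonnegative $A$ one passes to a limit of totally positive matrices, and this is where the main obstacle lies: a singular totally nonnegative matrix need not admit an $LDU$ decomposition, and naive elimination can leave the totally nonnegative class. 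The standard resolution uses that totally positive matrices are dense among totally nonnegative ones and that, for fixed $n$, a totally positive matrix requires only $n^2$ elementary factors in one of finitely many fixed patterns; each pattern gives a polynomial map from a closed orthant whose image is therefore closed, so the limit $A$ remains such a product. Concatenating the corresponding elementary networks then yields a planar network $D$ with $A(D) = A$.
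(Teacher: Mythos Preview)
The paper does not actually prove this theorem; it merely states it and cites \cite{BrentiCTP}, \cite{CryerProp}, \cite{loewner}, \cite{WReduction} --- the same sources you invoke.  Your sketch via the Loewner--Whitney factorization is the standard route and is fine as a plan: the concatenation principle and the elementary networks are exactly right, and once one has the factorization the conclusion is immediate.

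There is, however, a genuine gap in your closing paragraph.  The claim that ``each pattern gives a polynomial map from a closed orthant whose image is therefore closed'' is false as a general principle, and it is false here.  For $n=2$, the single LDU pattern sends $(a,d,s,t)\in\mathbb{R}_{\ge0}^4$ to $\left(\begin{smallmatrix}a&at\\sa&sat+d\end{smallmatrix}\right)$; the matrix $\left(\begin{smallmatrix}0&1\\0&0\end{smallmatrix}\right)$ is a limit of images (take $a=\epsilon$, $t=1/\epsilon$, $s=d=0$) but is not itself in the image.  So density of totally positive matrices plus this compactness claim does not finish the argument.  The references you cite handle the singular case differently: Whitney's reduction and Cryer's refinement show \emph{constructively} that any totally nonnegative $A$ can be brought to diagonal form by a sequence of elementary operations $A\mapsto (I-cE_{i+1,i})A$ or $A\mapsto A(I-cE_{i,i+1})$ with $c\ge0$ chosen so that total nonnegativity is preserved at each step --- no limit is needed.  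If you want to keep the approximation viewpoint, you instead need to allow all reduced-word patterns (not just LDU) and argue more carefully, as in Fomin--Zelevinsky.
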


Generalizing the determinant are
matrix functions
$\imm\theta: \mat n (\mathbb C) \rightarrow \mathbb C$
called {\em immanants} and parametrized by linear functionals
$\theta: \csn \rightarrow \mathbb C$.
Define
\begin{equation*}
  \imm\theta(A) = \sum_{w \in \sn} \theta(w) a_{1,w_1} \cdots a_{n,w_n}.
\end{equation*}
In the language of Section~\ref{s:sftrace},
we have $\det(A) = \imm{\epsilon^n}(A)$.
For some functions $\theta$, the number $\imm\theta(A)$ is nonnegative
for all totally nonnegative matrices $A$ and
has a nice
combinatorial interpretation in terms of families of paths in $D$.
We say that a path family $\pi = (\pi_1,\dotsc,\pi_n)$ in a planar network $D$
has {\em type} $w = w_1 \cdots w_n \in \sn$ if for $i = 1, \dotsc, n$, the path
$\pi_i$ begins at the source $s_i$ and terminates at the sink $t_{w_i}$.
Define the sets
\begin{equation*}
  \begin{aligned}
    \mcp_w(D) &= \{ \pi \text{ in } D \,|\, \type(\pi) = w \},\\
    \mcp(D) &= \ntnsp\bigcup_{w \in \sn}\ntnsp \mcp_w(D).
  \end{aligned}
  \end{equation*}
Each path family $\pi = (\pi_1,\dotsc,\pi_n) \in \mcp_w(D)$
forms a poset $P = P(\pi)$ defined by $\pi_i <_P \pi_j$
if $i < j$ (as integers) and $\pi_i$ does not intersect $\pi_j$.
For example, the planar network $D$ in (\ref{eq:planarnet}) has two
path families of type $132$.  These and their posets are
\begin{equation}\label{eq:pirho}
\begin{tikzpicture}[scale=.7,baseline=-5]
\draw[gray,-,thin,densely dotted] (0,1) -- (3,1);
\draw[gray,-,thin,densely dotted] (0,0) -- (1,0);
\draw[-,thick,dashed] (0,-1) -- (3,-1);
\draw[gray,-,thin,densely dotted] (0,0) -- (1,-1) -- (3,0);
\draw[-,ultra thick] (0,0) -- (1,-1) -- (3,1);
\draw[-,ultra thick, dotted] (0,1) -- (1,0) -- (3,0);
  \node at (-.5,1) {$\pi_3$};  
  \node at (-.5,0) {$\pi_2$};  
  \node at (-.5,-1) {$\pi_1$};  
\end{tikzpicture}\,,
\quad \,
P(\pi) = \ntnsp
\begin{tikzpicture}[scale=.7,baseline=-5]
\draw[fill] (.8,.5) circle (1mm); \node at (.8,1) {$\pi_3$};
\draw[fill] (.8,-.5) circle (1mm); \node at (.8,-1) {$\pi_1$};
\draw[fill] (1.6,0) circle (1mm); \node at (1.6,-.5) {$\pi_2$};
\draw[-,thick] (.8,.5) -- (.8,-.5);
\end{tikzpicture}\ntksp,
\qquad \qquad
\begin{tikzpicture}[scale=.7,baseline=-5]
\draw[gray,-,thin,densely dotted] (0,1) -- (3,1);
\draw[-,ultra thick] (0,-.035) -- (2,-.035) -- (3,1);
\draw[-,thick,dashed] (0,-1) -- (3,-1);
\draw[gray,-,thin,densely dotted] (0,0) -- (1,-1) -- (3,0);
\draw[gray,-,thin,densely dotted] (1,-1) -- (3,1);
\draw[-,ultra thick, dotted] (0,1) -- (1,0.035) -- (3,0.035);
  \node at (-.5,1) {$\rho_3$};  
  \node at (-.5,0) {$\rho_2$};  
  \node at (-.5,-1) {$\rho_1$};  
\end{tikzpicture}\,,
\quad \,
P(\rho) = \ntksp
\begin{tikzpicture}[scale=.7,baseline=-5]
\draw[fill] (0,.5) circle (1mm); \node at (0,1) {$\rho_2$};
\draw[fill] (.4,-.5) circle (1mm); \node at (.4,-1) {$\rho_1$};
\draw[fill] (.8,.5) circle (1mm); \node at (.8,1) {$\rho_3$};
\draw[-,thick] (0,.5) -- (.4,-.5);
\draw[-,thick] (.8,.5) -- (.4,-.5);
\end{tikzpicture}\ntksp,
\qquad
\end{equation}
so $\mcp_{132}(D) = \{\pi,\rho\}$.

Observe that if path families $\pi$, $\sigma$ in $D$ consist of
the same multiset $K$ of edges of $D$, then they satisfy
$\wgt(\pi) = \wgt(\sigma)$.  
Call such a multiset $K$ a {\em bijective skeleton},
and define $\wgt(K)$ to be the product of its edge weights, with multiplicities.
Define the sets
\begin{equation*}
  \begin{aligned}
    \Pi(K) &= \{ \pi \in \mcp(D) \,|\, \text{edge multiset of $\pi$ is $K$}\},\\
    \Pi_w(K) &= \{ \pi \in \Pi(K) \,|\, \type(\pi) = w\},
  \end{aligned}
\end{equation*}
and the $\zsn$-element
\begin{equation*}
  z(K) = \ntksp \sum_{\pi \in \Pi(K)} \ntksp \type(\pi).
\end{equation*}
For example, let bijective skeleton $K$ be the multiset of edges of $D$
(\ref{eq:planarnet}) covered the path family $\rho$ in (\ref{eq:pirho}),
and let $\sigma$ be the unique path family of type $123$ covering $K$.
Then we have
\begin{equation*}
  K =
\begin{tikzpicture}[scale=.7,baseline=-5]
\draw[gray,-,thin,densely dotted] (0,1) -- (3,1);
\draw[-,ultra thick] (0,0) -- (2,0) -- (3,1);
\draw[-,ultra thick] (0,-1) -- (3,-1);
\draw[gray,-,thin,densely dotted] (0,0) -- (1,-1) -- (3,0);
\draw[gray,-,thin,densely dotted] (1,-1) -- (3,1);
\draw[-,ultra thick] (0,1) -- (1,0) -- (3,0);
  \node at (1.5,.3) {$_{(2)}$};  
\end{tikzpicture}\,,
\qquad \qquad
\begin{tikzpicture}[scale=.7,baseline=-5]
\draw[gray,-,thin,densely dotted] (0,1) -- (3,1);
\draw[-,ultra thick] (0,-.035) -- (3,-.035);
\draw[-,thick,dashed] (0,-1) -- (3,-1);
\draw[gray,-,thin,densely dotted] (0,0) -- (1,-1) -- (3,0);
\draw[gray,-,thin,densely dotted] (1,-1) -- (3,1);
\draw[-,ultra thick, dotted] (0,1) -- (1,.035) -- (2, .035) -- (3,1);
  \node at (-.5,1) {$\sigma_3$};  
  \node at (-.5,0) {$\sigma_2$};  
  \node at (-.5,-1) {$\sigma_1$};  
\end{tikzpicture}\,,
\end{equation*}
with $(2)$ marking the edge in $K$ having multiplicity $2$.
We also have
\begin{equation*}
  \begin{gathered}
\Pi_{132}(K) = \{\rho\},\qquad
\Pi_{123}(K) = \{\sigma\},\qquad
\Pi(K) = \{\rho, \sigma\},\\
\wgt(K) = \wgt(\rho) = \wgt(\sigma) = 50, \qquad
z(K) = 132 + 123 = \wtc{132}1.
  \end{gathered}
  \end{equation*}
It is known that for any bijective skeleton $K$, 
$z(K)$ equals a product of Kazhdan-Lusztig
basis elements $\wtc w1 \in \zsn$ indexed by $312$-avoiding permutations,
and that
for any totally nonnegative matrix $A$, the numbers
$\imm{\theta}(A)$ and $\theta(z(K))$ are closely related.
(See, e.g., \cite[Thm.\,2.1]{StemImm}.)
\begin{prop}\label{p:skeleton}
  Let $A$ be the path matrix of a weighted planar network $D$.
  Then for any linear functional $\theta: \csn \rightarrow \mathbb C$, we have
  \begin{equation*}
    \imm \theta(A) = \sum_K \wgt(K) \theta(z(K)),
  \end{equation*}
  where the sum is over all bijective skeletons $K$ in $D$.
\end{prop}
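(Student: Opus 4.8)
The plan is to expand $\imm\theta(A)$ straight from its definition and then reorganize the resulting sum by grouping path families according to the multiset of edges they use. First I would write
$\imm\theta(A) = \sum_{w \in \sn} \theta(w)\, a_{1,w_1} \cdots a_{n,w_n}$
and substitute the defining expression for each entry, $a_{i,j} = \sum \wgt(\pi_i)$ over paths $\pi_i$ from $s_i$ to $t_j$. Distributing the product $a_{1,w_1} \cdots a_{n,w_n}$ over all choices amounts to choosing one source-to-sink path for each source, so by \eqref{eq:wgt} this product equals $\sum_{\pi \in \mcp_w(D)} \wgt(\pi)$. Summing over $w$ and recalling $\mcp(D) = \cup_w \mcp_w(D)$ and $\type(\pi) = w$ for $\pi \in \mcp_w(D)$, I obtain $\imm\theta(A) = \sum_{\pi \in \mcp(D)} \theta(\type(\pi))\, \wgt(\pi)$.

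Next I would partition $\mcp(D)$ by edge multiset. By definition, the bijective skeletons $K$ are exactly the edge multisets arising from path families, so the sets $\Pi(K)$ partition $\mcp(D)$ as $K$ ranges over all bijective skeletons in $D$. The key point, already recorded in the discussion preceding the proposition, is that $\wgt(\pi)$ depends only on the multiset $K$ of edges used — specifically $\wgt(\pi) = \wgt(K)$, the product of edge weights counted with multiplicity — so within each block $\Pi(K)$ the factor $\wgt(\pi)$ is constant and can be pulled out of the inner sum. This gives $\imm\theta(A) = \sum_K \wgt(K) \sum_{\pi \in \Pi(K)} \theta(\type(\pi))$.

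Finally I would apply linearity of $\theta$ together with the definition of $z(K)$: $\sum_{\pi \in \Pi(K)} \theta(\type(\pi)) = \theta\big( \sum_{\pi \in \Pi(K)} \type(\pi) \big) = \theta(z(K))$. Substituting this into the previous display yields $\imm\theta(A) = \sum_K \wgt(K)\, \theta(z(K))$, the sum over bijective skeletons $K$, which is the claim.

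There is no genuinely hard step here; the argument is a bookkeeping rearrangement of a double sum. The only points requiring care are that the blocks $\Pi(K)$ really do partition $\mcp(D)$ and that $\wgt$ is well defined on skeletons, and both are immediate from the definitions of bijective skeleton and of $\wgt(K)$ given above, so I would simply cite those. Note in particular that the deeper fact that $z(K)$ is a product of Kazhdan–Lusztig basis elements indexed by $312$-avoiding permutations plays no role in this proposition; it is needed only later when one extracts combinatorial interpretations of specific immanants.
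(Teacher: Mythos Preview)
Your proof is correct and follows essentially the same approach as the paper: expand the immanant via the path-matrix definition, group path families by their edge multiset (bijective skeleton), use that $\wgt(\pi)$ depends only on the skeleton, and apply linearity of $\theta$ together with the definition of $z(K)$. The only cosmetic difference is the order of grouping---the paper groups by skeleton within each fixed $w$ and then sums over $w$, while you sum over all of $\mcp(D)$ first and then partition by skeleton---but the content is identical.
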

\begin{proof}
  By the definition of path matrix,
  we can interpret
  each product of matrix entries
  appearing in
  $\imm \theta(A)$
  as
  \begin{equation*}
  a_{1,w_1} \cdots a_{n,w_n} = 
  \sum_K \ntnsp \sum_{\pi \in \Pi_w(K)} \ntksp
  \wgt(\pi) = \sum_K \wgt(K) | \Pi_w(K) |.
  \end{equation*}
  Multiplying each product by $\theta(w)$, summing over $w \in \sn$,
  and using the linearity of $\theta$,
  we may thus express $\imm \theta(A)$ as
  \begin{equation*}
    \begin{aligned}
    \sum_{w \in \sn} \ntnsp \theta(w) \sum_K \wgt(K) |\Pi_w(K)|
    &= \sum_K \wgt(K) \ntnsp \sum_{w \in \sn} \ntnsp \theta(w) |\Pi_w(K)| \\
    &= \sum_K \wgt(K) \,\theta \Big( \ntksp \sum_{w \in \sn} \ntksp |\Pi_w(K)| w \Big ) \\
    &= \sum_K \wgt(K) \,\theta \Big( \ntksp\ntnsp \sum_{\pi \in \Pi(K)} \ntksp\ntnsp \type(w) \Big ).
    \end{aligned}
  \end{equation*}
\end{proof}
Sometimes a combinatorial interpretation for $\imm{\theta}(A)$ comes
from careful consideration of $\theta(z(K))$; other times it comes
from a simple expression for $\imm{\theta}(A)$, such as the
Littlewood-Merris-Watkins
identities~\cite[\S 6.5]{LittlewoodTGC}, \cite[\S 1]{MerWatIneq},
  \begin{gather}
    \imm{\epsilon^\lambda}(A) =
    \ntksp \sum_{(I_1,\dotsc,I_r)} \ntksp
    \det(A_{I_1,I_1}) \cdots \det(A_{I_r,I_r}),\label{eq:lmw1}\\
    \imm{\eta^\lambda}(A) =
    \ntksp \sum_{(I_1,\dotsc,I_r)} \ntksp
    \perm(A_{I_1,I_1}) \cdots \perm(A_{I_r,I_r})\label{eq:lmw2},
  \end{gather}
  where the sums are over ordered set partitions of $[n]$ of type
  $\lambda = (\lambda_1,\dotsc,\lambda_r).$
  
\ssec{Induced sign character immanants}

Combinatorial interpretations for the immanants $\imm{\epsilon^\lambda}(A)$
follow easily from Theorem~\ref{t:lin}
and (\ref{eq:lmw1}).
\begin{thm}\label{t:epsilonimm}
  Let planar network $D$ have path matrix $A$.  Then we have
  \begin{equation}\label{eq:epsilonimm}
  \imm{\epsilon^\lambda}(A) = \sum_K \wgt(K) \nTksp \sum_{\pi \in \Pi_e(K)} \nTksp 
  \epsilon^\lambda(\inc(P(\pi))).
  \end{equation}
\end{thm}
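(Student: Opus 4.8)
The plan is to derive (\ref{eq:epsilonimm}) from the Lindström-type evaluation of minors in Theorem~\ref{t:lin} together with the Littlewood--Merris--Watkins identity (\ref{eq:lmw1}), as the sentence preceding the statement suggests; Proposition~\ref{p:skeleton} is not needed here. First I would apply (\ref{eq:lmw1}) to write $\imm{\epsilon^\lambda}(A)$ as a sum, over all ordered set partitions $(I_1,\dotsc,I_r)$ of $[n]$ of type $\lambda = (\lambda_1,\dotsc,\lambda_r)$, of the products $\det(A_{I_1,I_1}) \cdots \det(A_{I_r,I_r})$. By the consequence of Theorem~\ref{t:lin} recorded just after its statement, each factor $\det(A_{I_j,I_j})$ equals the sum of $\wgt(\cdot)$ over nonintersecting path families in $D$ from the sources indexed by $I_j$ to the sinks indexed by $I_j$, and planarity forces such a family to route $s_i$ to $t_i$ for each $i \in I_j$. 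Hence $\imm{\epsilon^\lambda}(A)$ becomes a sum, over ordered set partitions $(I_1,\dotsc,I_r)$ of type $\lambda$ and over choices, for each $j$, of a nonintersecting family $\pi^{(j)}$ within block $I_j$, of the product $\wgt(\pi^{(1)}) \cdots \wgt(\pi^{(r)})$.

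Next I would reorganize this sum by merging the blocks. The concatenation $\pi = (\pi_1,\dotsc,\pi_n)$ of the families $\pi^{(1)},\dotsc,\pi^{(r)}$ is a single path family of type $e$; its weight is $\wgt(\pi^{(1)}) \cdots \wgt(\pi^{(r)})$, and its edge multiset is the union of those of the $\pi^{(j)}$. Conversely, recording the block of each index is the same as coloring the vertices of $\inc(P(\pi))$ with colors $1,\dotsc,r$ so that color $j$ is used $\lambda_j$ times, and the hypothesis that each $\pi^{(j)}$ be nonintersecting translates exactly into the condition that each color class be an independent set of $\inc(P(\pi))$ --- recall that $i$ and $j$ are incomparable in $P(\pi)$ precisely when $\pi_i$ and $\pi_j$ intersect. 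So for a fixed family $\pi$ of type $e$, the number of admissible ordered set partitions equals the number of proper colorings of $\inc(P(\pi))$ of type $\lambda$, which is $\epsilon^\lambda(\inc(P(\pi)))$ by (\ref{eq:epsilonlambdaP}). Summing over all type-$e$ families $\pi$ and then grouping them by their common edge multiset $K$, and using that $\wgt$ is constant equal to $\wgt(K)$ on $\Pi(K)$, yields the right-hand side of (\ref{eq:epsilonimm}).

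Once (\ref{eq:lmw1}) is in hand the argument is essentially bookkeeping, so the one point demanding care --- and the main obstacle --- is the bijection in the second paragraph: that an ordered set partition of type $\lambda$ equipped with a block-wise nonintersecting family carries the same information as a type-$e$ path family $\pi$ equipped with a proper coloring of $\inc(P(\pi))$ of type $\lambda$. This is immediate from the definition of $P(\pi)$ (non-adjacency in $\inc(P(\pi))$ is nonintersection of the corresponding paths), but it must be written out, including the verification that merging the disjoint blocks into a single type-$e$ family and, conversely, reading off the color classes of a proper coloring are mutually inverse operations. I would also invoke explicitly the invariance of $\wgt$ on each $\Pi(K)$ (noted before Proposition~\ref{p:skeleton}) to legitimize the final regrouping by edge multiset.
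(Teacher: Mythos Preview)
Your proposal is correct and follows essentially the same approach as the paper's proof: start from the Littlewood--Merris--Watkins identity (\ref{eq:lmw1}), interpret each determinant via Theorem~\ref{t:lin} as a sum over blockwise-nonintersecting type-$e$ families, and then reorganize by first fixing the merged family $\pi$ and counting admissible ordered set partitions. The only cosmetic difference is that the paper phrases the final count as the number of column-strict $P(\pi)$-tableaux of shape $\lambda^{\tr}$, whereas you phrase it as the number of proper colorings of $\inc(P(\pi))$ of type $\lambda$; these are the two equivalent descriptions of $\epsilon^\lambda(\inc(P(\pi)))$ recorded in (\ref{eq:epsilonlambdaP}).
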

\begin{proof}
  By Theorem~\ref{t:lin} and the comment immediately following it,
  the term of (\ref{eq:lmw1})
  corresonding to a fixed ordered set partition $(I_1,\dotsc,I_r)$
  is equal to the sum of weights of path families
  $\pi = (\pi_1,\dotsc,\pi_n)$ of type $e$ in which for $j=1,\dotsc,r$,
  paths indexed by $I_j$ are pairwise nonintersecting.
  This partitioned path family naturally forms a column-strict tableau
  $U = U(\pi,I_1,\dotsc,I_r)$ of shape
  $\lambda^\tr$, if we place paths indexed by $I_j$ into column $j$.
  We
  may therefore write the right-hand-side of (\ref{eq:lmw1}) as
  \begin{equation*}
    \begin{aligned}
      &\sum_K \wgt(K)
      \nTksp \sum_{\pi \in \Pi_e(K)} \nTksp \#
      \{ (I_1,\dotsc,I_r) \,|\, U(\pi,I_1,\dotsc,I_r)
      \text{ is column-strict of shape $\lambda^\tr$\,} \}\\
      &\qquad = \sum_K \wgt(K)
      \nTksp \sum_{\pi \in \Pi_e(K)} \nTksp \#
      \text{ column-strict $P(\pi)$-tableaux of shape $\lambda^\tr$ }\\
      &\qquad = \sum_K \wgt(K)
      \nTksp \sum_{\pi \in \Pi_e(K)} \nTksp 
      \epsilon^\lambda(\inc(P(\pi))).
      \end{aligned}
  \end{equation*}
\end{proof} 

Combining Proposition~\ref{p:skeleton} with Theorem~\ref{t:epsilonimm},
we find that the inner sum of (\ref{eq:epsilonimm}) is $\epsilon^\lambda(z(K))$
and we generalize this fact as follows.
\begin{cor}\label{c:skeleton}
  Let $K$ be a bijective skeleton in a planar network $D$.
  Then for all traces
  $\theta: \csn \rightarrow \mathbb C$, we have
  \begin{equation*}
    \theta(z(K)) = \ntksp \sum_{\pi \in \Pi_e(K)} \ntksp \theta(\inc(P(\pi))).
  \end{equation*}
\end{cor}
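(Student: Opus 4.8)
The plan is to derive the identity from Theorem~\ref{t:epsilonimm} and Proposition~\ref{p:skeleton} by comparing the coefficients of a single skeleton. Both sides of the claimed equation are $\mathbb{C}$-linear in the trace $\theta$: the left side because $\theta \mapsto \theta(z(K))$ is linear, and the right side because $\theta \mapsto \theta(\inc(P(\pi)))$ is linear (by the defining convention $\theta(G) \defeq \theta(g)$ with $g$ fixed). Since the induced sign characters $\{\epsilon^\lambda \mid \lambda \vdash n\}$ form a basis of the space of traces $\csn \to \mathbb{C}$ --- they correspond under the Frobenius isomorphism to the basis $\{e_\lambda\}$ of $\mathbb{C} \otimes \Lambda_n$ --- it suffices to establish
\begin{equation*}
  \epsilon^\lambda(z(K)) = \sum_{\pi \in \Pi_e(K)} \epsilon^\lambda(\inc(P(\pi)))
\end{equation*}
for each $\lambda \vdash n$ and each bijective skeleton $K$ of $D$.

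To isolate a single skeleton $K$, I would treat the edge weights $\{c_{u,v} \mid (u,v) \in E\}$ of $D$ as formal indeterminates, so that the path matrix $A = A(D)$ has entries in the polynomial ring $R$ they generate and $\imm{\epsilon^\lambda}(A) \in R$. The results quoted earlier are polynomial identities in the weights that hold on all of $[0,\infty)^E$, hence hold identically in $R$. Proposition~\ref{p:skeleton} applied to $\theta = \epsilon^\lambda$ expresses $\imm{\epsilon^\lambda}(A)$ as $\sum_K \wgt(K)\,\epsilon^\lambda(z(K))$, while Theorem~\ref{t:epsilonimm} expresses the same polynomial as $\sum_K \wgt(K) \sum_{\pi \in \Pi_e(K)} \epsilon^\lambda(\inc(P(\pi)))$, with both sums running over the bijective skeletons of $D$. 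Equating the coefficient of the monomial $\wgt(K)$ on the two sides gives the displayed identity, and linearity in $\theta$ then completes the proof.

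The only step requiring genuine justification is the coefficient extraction: a bijective skeleton is a multiset of edges of $D$, so distinct skeletons $K \neq K'$ have distinct edge-multiplicity functions and therefore $\wgt(K) \neq \wgt(K')$ as monomials in $R$, which rules out any cancellation across different skeletons and legitimizes comparing coefficients term by term. I expect this to be the main --- and essentially the only --- obstacle, since the substantive combinatorics has already been done in Theorem~\ref{t:epsilonimm} via the Littlewood--Merris--Watkins identity~(\ref{eq:lmw1}) and Lindstr\"om's lemma (Theorem~\ref{t:lin}). A more self-contained route, analyzing $\epsilon^\lambda(z(K))$ directly from the expression of $z(K)$ as a product of Kazhdan--Lusztig basis elements indexed by $312$-avoiding permutations, seems feasible but strictly more work, so I would not pursue it.
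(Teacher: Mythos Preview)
Your proposal is correct and matches the paper's proof essentially line for line: reduce to $\theta = \epsilon^\lambda$ by linearity, equate the two expressions for $\imm{\epsilon^\lambda}(A)$ coming from Proposition~\ref{p:skeleton} and Theorem~\ref{t:epsilonimm}, and extract the coefficient of $\wgt(K)$ using independence of the edge weights. The paper phrases this last step as choosing algebraically independent real weights rather than formal indeterminates, but the argument is identical.
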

\begin{proof}
  Weight the planar network $D$ by algebraically
  independent real numbers and let $A$ be its path matrix.
  By Proposition~\ref{p:skeleton}, we have
  \begin{equation}\label{eq:epsiloninterp2}
    \imm{\epsilon^\mu}(A) = \sum_K \wgt(K) \epsilon^\mu(z(K)),
  \end{equation}
  where the sum is over all bijective skeletons $K$ in $D$.
  Since the edge weights of $D$ are algebraically independent,
  we may compare this expression to the right-hand-side of
  (\ref{eq:epsilonimm}) to obtain
  \begin{equation*}
    \epsilon^\mu(z(K)) = \nTksp \sum_{\pi \in \Pi_e(K)} \nTksp
    \epsilon^\mu(\inc(P(\pi))).
  \end{equation*}
  Expanding $\theta$ in the induced sign character basis
  $\{ \epsilon^\mu \,|\, \mu \vdash n\}$, we obtain the desired result.
\end{proof}

Proposition~\ref{p:skeleton} and
Corollary~\ref{c:skeleton} show that
for $\theta \in \trspace n$
we may compute $\imm \theta(A)$ by considering
a planar network $D$ having path matrix $A$,
each path family $\pi$ of type $e$ in $D$,
and the corresponding
chromatic symmetric function $X_{\inc(P(\pi))}$.
\begin{cor}
  For $D$ a planar network having path matrix $A$, we have
\begin{equation}\label{eq:propcorcomb}
  \imm{\theta}(A) = \sum_K \wgt(K)
  \nTksp \sum_{\pi \in \Pi_e(K)} \nTksp \theta(\inc(P(\pi)),
\end{equation}
where $K$ varies over all bijective skeletons in $D$.
\end{cor}
Thus if $\theta \in \trspace n$ satisfies
$\theta(\inc(P)) \geq 0$ for all posets $P$, then it also satisfies
$\imm\theta(A) \geq 0$
for all totally nonnegative matrices $A$.
For the convenience of the reader we summarize this
and other known implications as follows.
\begin{cor}\label{c:tnnimplications}
  For $\theta \in \trspace n$, the statements
  \begin{enumerate}
  \item $\theta( \wtc w1) \geq 0$ for all permutations $w \in \sn$,
  \item $\theta( \wtc{w^{(1)}}1 \cdots \wtc{w^{(k)}}1) \geq 0$ for all sequences
   $(w^{(1)},\dotsc,w^{(k)})$ of maximal elements of parabolic subgroups of $\sn$,
  \item $\theta(\inc(P)) \geq 0$ for all posets $P$,
  \item $\imm\theta(A) \geq 0$ for all totally nonnegative matrices $A$,
  \item $\theta(\inc(P)) \geq 0$ for all
    unit interval orders $P$,
  \item $\theta(\inc(P)) \geq 0$ for all
    $(\mathbf3 + \mathbf1)$-free posets $P$,
  \item $\theta(\wtc w1) \geq 0$ for all $312$-avoiding permutations $w \in \sn$,
  \item $\theta(\wtc w1) \geq 0$ for all \pavoiding permutations $w \in \sn$
\end{enumerate}
  satisfy the implications (1) $\Rightarrow$ (2) $\Rightarrow$ (4)
  $\Rightarrow$ (5) $\Leftrightarrow$ (6) $\Leftrightarrow$ (7) $\Leftrightarrow$ (8), and
  (3) $\Rightarrow$ (4).
\end{cor}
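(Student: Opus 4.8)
The plan is to establish each implication separately, drawing on the results already developed in the excerpt. The implication (3) $\Rightarrow$ (4) is already in hand: Equation~(\ref{eq:propcorcomb}), which follows from Proposition~\ref{p:skeleton} and Corollary~\ref{c:skeleton}, expresses $\imm\theta(A)$ as a nonnegative combination (weights $\wgt(K) \geq 0$) of values $\theta(\inc(P(\pi)))$, so if every $\theta(\inc(P))$ is nonnegative then so is every $\imm\theta(A)$. For (2) $\Rightarrow$ (4), I would invoke the structural fact recalled just before Proposition~\ref{p:skeleton}: for a bijective skeleton $K$, the element $z(K)$ equals a product of Kazhdan--Lusztig basis elements $\wtc{w}{1}$ indexed by $312$-avoiding permutations, and each such $w$ is the maximal element of a parabolic subgroup (more precisely, $z(K)$ is a product of elements of the form $\wtc{w}{1}$ with $w$ ranging over maximal elements of parabolic subgroups of $\sn$). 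Combining this with Proposition~\ref{p:skeleton} gives $\imm\theta(A) = \sum_K \wgt(K)\,\theta(z(K))$, a nonnegative combination of the quantities in (2), which yields (4). The implication (1) $\Rightarrow$ (2) is then immediate once one observes that a product $\wtc{w^{(1)}}{1}\cdots\wtc{w^{(k)}}{1}$ of Kazhdan--Lusztig basis elements expands in the Kazhdan--Lusztig basis $\{\wtc{w}{1}\}$ with \emph{nonnegative integer} coefficients (this positivity of structure constants is classical for type $A$), so that a hypothesis of nonnegativity on each $\theta(\wtc{w}{1})$ propagates to the product.

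For the chain (4) $\Rightarrow$ (5) $\Leftrightarrow$ (6) $\Leftrightarrow$ (7) $\Leftrightarrow$ (8), the implication (4) $\Rightarrow$ (5) follows by specializing: given a unit interval order $P$, Proposition~\ref{p:uioto312avoid} produces a $312$-avoiding permutation $w = w(P)$ with $X_{\inc(P)} = Y(\wtc{w}{1})$, hence $\theta(\inc(P)) = \theta(\wtc{w}{1})$. One then realizes $\wtc{w}{1}$ (up to the factor $\qp{\ell(w)}{2}$ specialized at $q=1$) as $z(K)$ for a suitable bijective skeleton $K$ in a planar network, so that $\theta(\inc(P)) = \theta(z(K))$ appears as a term in the expansion of some $\imm\theta(A)$ with algebraically independent edge weights; nonnegativity of that immanant for all TNN $A$ then forces $\theta(z(K)) \geq 0$ by comparing coefficients. (Equivalently, one can cite Corollary~\ref{c:skeleton} to write $\theta(z(K)) = \sum_{\pi \in \Pi_e(K)}\theta(\inc(P(\pi)))$ and argue via Theorem~\ref{t:epsilonimm}-style coefficient extraction.) The equivalence (5) $\Leftrightarrow$ (6) uses the theorem of Guay-Paquet cited in the excerpt (reference~\cite{GPModRel}), which reduces positivity questions for $(\mathbf{3}+\mathbf{1})$-free posets to unit interval orders. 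The equivalence (5) $\Leftrightarrow$ (7) is again Proposition~\ref{p:uioto312avoid}: unit interval orders biject with $312$-avoiding permutations via $P \mapsto w(P)$, matching $\theta(\inc(P))$ with $\theta(\wtc{w}{1})$. Finally (7) $\Leftrightarrow$ (8): every $312$-avoiding permutation is both $3412$-avoiding and $4231$-avoiding (indeed $312$-avoiding permutations are a subclass of the $3412$- and $4231$-avoiding ones), so (8) $\Rightarrow$ (7) trivially; for (7) $\Rightarrow$ (8) one uses the fact, implicit in the smoothness discussion around Equation~(\ref{eq:kl}), that for a $3412$- and $4231$-avoiding permutation $w$ the element $\wtc{w}{1}$ factors as a product of Kazhdan--Lusztig elements indexed by $312$-avoiding permutations, so the nonnegativity in (7), together with the positive structure constants used for (1) $\Rightarrow$ (2), gives nonnegativity in (8).

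The main obstacle I anticipate is making precise the repeated assertion that $z(K)$, and more generally $\wtc{w}{1}$ for pattern-avoiding $w$, can be written as a product of Kazhdan--Lusztig elements indexed by $312$-avoiding permutations (maximal elements of parabolic subgroups), together with the positivity of the resulting structure constants; this is the technical heart linking conditions (1), (2), (7), (8) to the skeleton formalism, and while each piece is known (see the references cited around Proposition~\ref{p:skeleton} and \cite[Thm.\,2.1]{StemImm}), assembling them cleanly requires care. The coefficient-extraction argument for (4) $\Rightarrow$ (5) — using algebraic independence of edge weights to isolate a single $\theta(z(K))$ from the sum in Proposition~\ref{p:skeleton} — is routine but must be stated carefully to ensure that every $z(K)$ actually arises from some planar network (which it does, by Theorem~\ref{t:tnnconv} applied to an appropriate TNN matrix). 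Everything else is bookkeeping with the bijections and identities already recorded in the excerpt.
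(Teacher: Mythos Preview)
Your plan handles most implications correctly and in the same spirit as the paper: $(1)\Rightarrow(2)$ via positivity of Kazhdan--Lusztig structure constants, $(2)\Rightarrow(4)$ via the fact that each $z(K)$ is a product of $\wtc{w^{(i)}}{1}$ with $w^{(i)}$ parabolic maxima, $(3)\Rightarrow(4)$ via~(\ref{eq:propcorcomb}), $(5)\Leftrightarrow(6)$ via Guay--Paquet, $(5)\Leftrightarrow(7)$ via Proposition~\ref{p:uioto312avoid}, and $(8)\Rightarrow(7)$ trivially. Two steps, however, have genuine gaps.

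For $(4)\Rightarrow(5)$, your coefficient-extraction argument does not work: a polynomial $\sum_K c_K\,\wgt(K)$ in the edge weights that is nonnegative on the nonnegative orthant need not have all $c_K\geq 0$ (consider $x^2-xy+y^2$). The paper avoids this entirely: for each unit interval order $P$ there is a specific planar network $D(P)$ whose path matrix $A$ satisfies $\theta(\inc(P))=\imm\theta(A)$ \emph{as an equality}, not merely as one term among many (\cite[Prop.~3.8, Thm.~4.1, Cor.~7.5]{CHSSkanEKL}); then $(4)$ applied to this single TNN matrix $A$ gives $(5)$ immediately. Your parenthetical alternative via Corollary~\ref{c:skeleton} is closer, but to make it work you must note that for $D(P)$ the relevant sum collapses to a single term.

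For $(7)\Rightarrow(8)$, the factorization route is circular as stated. Even granting that $\wtc{w}{1}$ for $3412$-, $4231$-avoiding $w$ factors as a product of $\wtc{v_i}{1}$ with each $v_i$ $312$-avoiding, expanding that product in the KL basis via positive structure constants yields $\sum_u c_u\,\wtc{u}{1}$ with $c_u\geq 0$ but with $u$ ranging over \emph{all} permutations, not just $312$-avoiding ones; so $(7)$ alone cannot bound $\theta(\wtc{w}{1})$. What your argument really shows is $(2)\Rightarrow(8)$ (or $(1)\Rightarrow(8)$), which is already implied by the chain $(2)\Rightarrow(4)\Rightarrow(5)$ together with the step you still need. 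The paper closes the loop differently, proving $(5)\Rightarrow(8)$: for each $3412$-, $4231$-avoiding $w$ there exists a unit interval order $P(w)$ with $\theta(\wtc{w}{1})=\theta(\inc(P(w)))$ for every trace $\theta$ (\cite[Thm.~3.5, Lem.~5.3]{SkanNNDCB}, \cite[Thm.~7.4]{CHSSkanEKL}), so $(5)$ gives $(8)$ directly, and the equivalences $(5)\Leftrightarrow(7)\Leftrightarrow(8)$ follow.
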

\begin{proof}

  \noindent
  ($(1) \Rightarrow (2) \Rightarrow (4)$)
  Suppose that $(4)$ is false, i.e., that
  for some totally nonnegative matrix $A$ we have
  $\imm\theta(A) < 0$.
  By Theorem~\ref{t:tnnconv}, $A$ is the path matrix of a planar network $D$.
  Then Proposition~\ref{p:skeleton} implies that
  some bijective skeleton $K$ of $D$ satisfies $\theta(z(K)) < 0$.
  It is straightforward to show that $z(K)$
  equals a (positive rational multiple of a)
  product of Kazhdan-Lusztig basis elements of the form
  $\wtc{w^{(1)}}1 \cdots \wtc{w^{(k)}}1$
  in which each permutation $w^{(i)}$ is a maximal element of a parabolic
  subgroup of $\sn$. (See, e.g., \cite[Cor.\,5.3]{CSkanTNNChar}.)
  Thus $(2)$ is false.
  This product in turn equals a nonnegative linear combination of
  Kazhdan-Lusztig basis elements. (See \cite[Appendix]{HaimanHecke}.)
  Thus $\theta(\wtc w1) < 0$ for some $w \in \sn$ and $(1)$ is false.

  \noindent ($(4) \Rightarrow (5)$)
  For every unit interval order $P$, there exists a planar network
  $D(P)$ with path matrix $A$ satisfying
  $\theta(\inc(P)) = \imm\theta(A)$~\cite[Prop.\,3.8, Thm.\,4.1, Cor.\,7.5]{CHSSkanEKL}.

  \noindent ($(5) \Leftrightarrow (6)$)
  Each unit interval order is
  $(\mathbf3 + \mathbf 1)$-free.  Conversely, 
  for each $(\mathbf3 + \mathbf1)$-free poset $P$, we have by
  \cite[Thm.\,5.3]{GPModRel} that $X_{\inc(P)}$ belongs to the cone
  generated by chromatic symmetric functions of unit interval orders.

  \noindent ($(5) \Rightarrow (8)$)
  By \cite[Thm.\,3.5, Lem.\,5.3]{SkanNNDCB}
  and \cite[Thm.\,7.4]{CHSSkanEKL}, we have that 
  for each \pavoiding permutation $w \in \sn$ there exists a
  unit interval order $P = P(w)$ satisfying
  $\theta(\inc(P(w)) = \theta(\wtc w1)$ for all
  $\theta \in \trspace n$.

  \noindent ($(8) \Rightarrow (7) \Rightarrow (5)$)
  Each $312$-avoiding permutation also \avoidsp.
  The bijection (\ref{eq:uioto312avoid}) to unit interval orders
  and Proposition~\ref{p:uioto312avoid} give the last implication.
  %
  
  \noindent ($(3) \Rightarrow (4)$) Follows from (\ref{eq:propcorcomb}).
\end{proof}  

Another consequence of Theorem~\ref{t:epsilonimm} is an analog of
Lemma~\ref{l:efactor} (2) for matrices. (See also \cite[Prop.\,2.4]{StemConj}.)
\begin{cor}\label{c:immfactor}
  For $\theta_1 \in \trspace k$, $\theta_2 \in \trspace{n-k}$,
  $\theta = \theta_1 \otimes \theta_2 \upparrow_{\mfs k \times \mfs{n-k}}^{\sn} \in \trspace n$,
  we have
  \begin{equation*}
    \imm\theta(A) = \sum_{J \subseteq [n]}
    \imm{\theta_1}(A_{J,J}) \imm{\theta_2}(A_{\tnsp\olj, \tnsp\olj}).
  \end{equation*}
\end{cor}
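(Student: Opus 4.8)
The plan is to prove the identity by mimicking the proof of Lemma~\ref{l:efactor}~(2), with the Littlewood--Merris--Watkins identity (\ref{eq:lmw1}) playing the role that the induced sign character formula (\ref{eq:epsilonfactor}) plays there. (Throughout, the sum over $J$ is read as running over those $J \subseteq [n]$ with $|J| = k$, since $A_{J,J}$ must be $k \times k$ for $\imm{\theta_1}(A_{J,J})$ to be defined, and $\mfs k$ is identified with the symmetric group on $J$ via the order-preserving bijection $[k]\to J$.) Since (\ref{eq:lmw1}) is a polynomial identity in the entries of $A$, the argument works for arbitrary $A$; no total nonnegativity hypothesis is needed.

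First I would reduce to the case $\theta_1 = \epsilon^\mu$, $\theta_2 = \epsilon^\nu$. Expanding $\theta_1 = \sum_{\mu \vdash k} a_\mu \epsilon^\mu$ and $\theta_2 = \sum_{\nu \vdash n-k} b_\nu \epsilon^\nu$ in the bases $\{\epsilon^\mu\}$, $\{\epsilon^\nu\}$ and using bilinearity of induction together with the fact that $\mfs\mu \times \mfs\nu$ is the Young subgroup $\mfs{\lambda(\mu,\nu)}$ of $\sn$ (so that $\epsilon^\mu \otimes \epsilon^\nu \upparrow_{\mfs k \times \mfs{n-k}}^{\sn} = \epsilon^{\lambda(\mu,\nu)}$, where $\lambda(\mu,\nu) \vdash n$ is the weakly decreasing rearrangement of the parts of $\mu$ and $\nu$), one gets $\theta = \sum_{\mu,\nu} a_\mu b_\nu \epsilon^{\lambda(\mu,\nu)}$, hence $\imm\theta(A) = \sum_{\mu \vdash k}\sum_{\nu \vdash n-k} a_\mu b_\nu \imm{\epsilon^{\lambda(\mu,\nu)}}(A)$ since $\imm\theta(A)$ is linear in $\theta$. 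As likewise $\imm{\theta_1}(M) = \sum_\mu a_\mu \imm{\epsilon^\mu}(M)$ and $\imm{\theta_2}(M) = \sum_\nu b_\nu \imm{\epsilon^\nu}(M)$, the identity for general $\theta_1,\theta_2$ follows, by substituting and collecting terms, from the special case
\begin{equation*}
  \imm{\epsilon^{\lambda(\mu,\nu)}}(A) = \sumsb{J \subseteq [n] \\ |J| = k}
  \imm{\epsilon^\mu}(A_{J,J})\,\imm{\epsilon^\nu}(A_{\olj,\olj}).
\end{equation*}

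To prove this special case I would apply (\ref{eq:lmw1}) to each side. Put $\alpha = (\mu_1,\dotsc,\mu_{\ell(\mu)},\nu_1,\dotsc,\nu_{\ell(\nu)})$ and $r = \ell(\mu)+\ell(\nu)$; since $\alpha$ is a rearrangement of $\lambda(\mu,\nu)$ we have $\epsilon^\alpha = \epsilon^{\lambda(\mu,\nu)}$, and since permuting the blocks of an ordered set partition does not change the product of its principal-submatrix determinants, (\ref{eq:lmw1}) gives that $\imm{\epsilon^{\lambda(\mu,\nu)}}(A)$ equals the sum of $\det(A_{I_1,I_1})\cdots\det(A_{I_r,I_r})$ over ordered set partitions $(I_1,\dotsc,I_r)$ of $[n]$ of type $\alpha$. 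On the other side, using $(A_{J,J})_{I,I} = A_{I,I}$ for $I \subseteq J$, applying (\ref{eq:lmw1}) to $A_{J,J}$ and to $A_{\olj,\olj}$ exhibits $\imm{\epsilon^\mu}(A_{J,J})\,\imm{\epsilon^\nu}(A_{\olj,\olj})$ as the sum of $\det(A_{I_1,I_1})\cdots\det(A_{I_r,I_r})$ over all pairs consisting of an ordered set partition of $J$ of type $\mu$ and one of $\olj$ of type $\nu$. Concatenation of such a pair is a bijection onto the ordered set partitions of $[n]$ of type $\alpha$ — the inverse recovers $J$ as the union of the first $\ell(\mu)$ blocks — and preserves the product of determinants, so summing over $J$ with $|J| = k$ matches the two expressions, proving the special case and hence the corollary.

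Everything here is formal once (\ref{eq:lmw1}) is in hand; the only point requiring attention is the combinatorial bookkeeping in the previous paragraph — the concatenation bijection on ordered set partitions and the insensitivity of $\imm{\epsilon^\lambda}(A)$ to rearranging $\lambda$ into a composition — which is exactly the bookkeeping already carried out in the proof of Lemma~\ref{l:efactor}~(2). A self-contained variant avoids (\ref{eq:lmw1}): writing $\imm\theta(A) = \sum_{w \in \sn}\theta(w)\permmon aw$ and noting that $\permmon aw = \prod_{j \in J} a_{j,w_j}\prod_{j \in \olj}a_{j,w_j}$ whenever $w(J)=J$, the corollary reduces to the class-function identity $\theta(w) = \sum_{J \colon w(J)=J,\ |J|=k}\theta_1(w|_J)\theta_2(w|_{\olj})$, which by bilinearity is checked on $\theta_1 = \psi^\mu$, $\theta_2 = \psi^\nu$ (so $\theta = \psi^{\mu\cup\nu}$, the power sum trace of the merged partition), where it becomes the elementary count of the ways to split the cycles of $w$ between $J$ and $\olj$.
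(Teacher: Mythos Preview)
Your proof is correct and follows exactly the approach the paper intends: mimic the proof of Lemma~\ref{l:efactor}~(2) with the Littlewood--Merris--Watkins identity (\ref{eq:lmw1}) playing the role of (\ref{eq:epsilonfactor}). The additional self-contained variant via power-sum traces is a nice alternative but goes beyond what the paper does.
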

\begin{proof}
  Similar to proof of Lemma~\ref{l:efactor} (2).  
\end{proof}

\ssec{Irreducible character immanants}

While no combinatorial interpretation is known for
$\imm{\chi^\lambda}(A)$, Stembridge~\cite[Cor.\,3.3]{StemImm}
proved the following.
\begin{thm}\label{t:stemimm}
  For $\lambda \vdash n$
  and $A$ an $n \times n$ totally nonnegative
  matrix we have $\imm{\chi^\lambda}(A) \geq 0$.
\end{thm}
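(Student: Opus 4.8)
The plan is to deduce this from Corollary~\ref{c:tnnimplications}, which already reduces the total nonnegativity of an immanant to a statement about Kazhdan--Lusztig characters. Taking $\theta = \chi^\lambda$ in the chain of implications $(1)\Rightarrow(2)\Rightarrow(4)$ of that corollary, it suffices to prove
\begin{equation*}
  \chi^\lambda(\wtc w 1) \geq 0 \qquad\text{for every } w \in \sn .
\end{equation*}
If one prefers to see the reduction explicitly: by Theorem~\ref{t:tnnconv} we may assume $A$ is the path matrix of a nonnegative weighted planar network $D$; by \eqref{eq:propcorcomb} we then have $\imm{\chi^\lambda}(A) = \sum_K \wgt(K)\,\chi^\lambda(z(K))$ with every $\wgt(K)\geq 0$; and, as in the proof of Corollary~\ref{c:tnnimplications}, each $z(K)$ is a nonnegative rational combination of Kazhdan--Lusztig basis elements $\wtc w 1$, so the claim for all $w$ gives $\imm{\chi^\lambda}(A)\geq 0$.

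The remaining point---and the one I expect to be the main obstacle---is the nonnegativity of $\chi^\lambda(\wtc w 1) = \sum_{v\leq w} P_{v,w}(1)\,\chi^\lambda(v)$, the value of an irreducible $\sn$-character at the $\qp12 = 1$ specialization of a Kazhdan--Lusztig basis element. This is the only ingredient not furnished by the elementary identities assembled in Sections~\ref{s:sftrace}--\ref{s:tnn}, and it is where the original argument of \cite{StemImm} does its real work. I would supply it from the representation theory of the Hecke algebra: the $q$-analogue $\chi_q^\lambda(\wtc w q)$ is, up to a power of $q$, the trace of $\wtc w q$ on the $q$-Specht module $S_q^\lambda$ written in its canonical basis, and one shows this trace has nonnegative coefficients (cf.\ \cite{HaimanHecke}); setting $\qp12 = 1$ then gives $\chi^\lambda(\wtc w 1)\geq 0$. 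Alternatively one simply quotes \cite[Cor.\,3.3]{StemImm} at this point.

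Finally I would record why the obvious term-by-term approach does not work, since this explains the role of the grouping into $z(K)$. It is tempting to apply Gasharov's formula \eqref{eq:Gash} directly to each summand of $\imm{\chi^\lambda}(A) = \sum_K \wgt(K)\sum_{\pi\in\Pi_e(K)} \chi^\lambda(\inc(P(\pi)))$. But the posets $P(\pi)$ arising from path families of type $e$ in a planar network need not be $(\mathbf{3}+\mathbf{1})$-free---a single path can be routed to meet each of three pairwise disjoint paths, producing an induced $\mathbf{3}+\mathbf{1}$---so \eqref{eq:Gash} does not apply to the individual terms $\chi^\lambda(\inc(P(\pi)))$, and these can be negative. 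Nonnegativity holds only after summing over $\Pi_e(K)$, i.e.\ for $\chi^\lambda(z(K))$, which is precisely why the argument passes through $z(K)$ (equivalently through $\wtc w 1$) rather than through individual incomparability graphs.
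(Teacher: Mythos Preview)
Your argument is correct, and the paper itself does not supply a proof of this theorem at all: it simply attributes the result to Stembridge~\cite[Cor.\,3.3]{StemImm}. In fact, the paper explicitly records your implication chain in the paragraph following Theorem~\ref{t:hookimm}, noting that Haiman's result $\chi_q^\lambda(\wtc wq)\in\mathbb N[q]$ implies Stembridge's inequality via Corollary~\ref{c:tnnimplications}.

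A brief comparison is still worth making. Stembridge's original 1991 proof is self-contained and elementary in the sense that it works directly with planar networks and a sign-reversing involution on path families; it does not pass through the Hecke algebra or Kazhdan--Lusztig theory. Your route, by contrast, invokes Haiman's 1993 result, whose proof rests on the geometric interpretation of Kazhdan--Lusztig basis elements (intersection cohomology of Schubert varieties) and is considerably deeper. So while your derivation is logically sound within the framework of this paper, it replaces an elementary combinatorial argument with a much heavier one. If the goal is merely to situate Theorem~\ref{t:stemimm} among the implications of Corollary~\ref{c:tnnimplications}, your write-up does that cleanly; if the goal is an independent proof, one should note that the real work has been exported to~\cite{HaimanHecke}.

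Your final paragraph about why the term-by-term Gasharov approach fails is accurate and matches the paper's own remarks following~\eqref{eq:propcorcomb}.
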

\bp\label{p:stemimm}
Combinatorially interpret the numbers $\imm{\chi^\lambda}(A)$ in
Theorem~\ref{t:stemimm}.
\ep
We do have a combinatorial interpretation in the special case that
$\lambda$ is a hook shape.
\begin{thm}\label{t:hookimm}
  For $A$ the path matrix of planar network $D$ of order $n$ and $k \leq n$,
  we have
  \begin{equation}\label{eq:hookimm}
    \imm{\chi^{k1^{n-k}}}(A) = \nTksp \sum_{\pi \in \mcp_e(D)} \nTksp \wgt(\pi)\,
    (\# \text{ standard $P(\pi)$-tableaux of shape $k1^{n-k}$}).
  \end{equation}
  In particular, when $k=n$, we obtain
  (\ref{eq:introdes}).
\end{thm}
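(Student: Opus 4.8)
The plan is to combine the path-family formula (\ref{eq:propcorcomb}) for immanants with the combinatorial interpretation of hook-shape irreducible characters from Proposition~\ref{p:Kali}. Since $\chi^{k1^{n-k}}$ is an element of $\trspace n$, Equation~(\ref{eq:propcorcomb}) --- itself a consequence of Theorem~\ref{t:epsilonimm} and Corollary~\ref{c:skeleton} --- applies with $\theta = \chi^{k1^{n-k}}$, yielding
\begin{equation*}
  \imm{\chi^{k1^{n-k}}}(A)
  = \sum_K \wgt(K) \nTksp \sum_{\pi \in \Pi_e(K)} \nTksp \chi^{k1^{n-k}}(\inc(P(\pi))),
\end{equation*}
where $K$ ranges over all bijective skeletons in $D$.

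First I would apply Proposition~\ref{p:Kali} to each poset $P(\pi)$. Because that result is valid for \emph{all} finite posets, $\chi^{k1^{n-k}}(\inc(P(\pi)))$ equals the number of standard $P(\pi)$-tableaux of shape $k1^{n-k}$. This is the only poset-theoretic input required, and its unrestricted generality (as opposed to, say, Gasharov's formula (\ref{eq:Gash}), which needs $(\mathbf3+\mathbf1)$-freeness) is precisely what allows the argument to run for an arbitrary planar network $D$.

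Next I would collapse the double sum. Each type-$e$ path family $\pi \in \mcp_e(D)$ has a unique edge multiset, namely its bijective skeleton $K$; conversely $\Pi_e(K)$ consists of exactly the type-$e$ path families with that skeleton, so the sets $\{\Pi_e(K)\}_K$ partition $\mcp_e(D)$. Moreover $\wgt(\pi) = \wgt(K)$ for every $\pi \in \Pi(K)$. Substituting these facts into the displayed identity turns the right-hand side into
\begin{equation*}
  \sum_{\pi \in \mcp_e(D)} \wgt(\pi)\,(\# \text{ standard $P(\pi)$-tableaux of shape $k1^{n-k}$}),
\end{equation*}
which is (\ref{eq:hookimm}).

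For the final assertion, specialize $k = n$, so that $k1^{n-k} = (n)$ and $\chi^{(n)}$ is the trivial character; then $\imm{\chi^{(n)}}(A) = \perm(A)$, while a standard $P$-tableau of shape $(n)$ is a single row with no $P$-descent, i.e.\ a $P$-descent-free $P$-permutation, so (\ref{eq:hookimm}) becomes the promised permanent formula (\ref{eq:introdes}). I do not expect a genuine obstacle here: with Proposition~\ref{p:Kali} and (\ref{eq:propcorcomb}) in hand the proof is reindexing, and the only point needing care is the verification that the bijective skeletons partition $\mcp_e(D)$ and that $\wgt$ is constant on each class --- both already recorded in the discussion preceding Proposition~\ref{p:skeleton}.
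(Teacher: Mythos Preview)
Your proposal is correct and follows essentially the same approach as the paper's proof: both invoke the path-family formula for immanants (the paper cites Proposition~\ref{p:skeleton} and Corollary~\ref{c:skeleton} separately, while you cite their combination (\ref{eq:propcorcomb}) directly), then apply Proposition~\ref{p:Kali} to each poset $P(\pi)$, and finally collapse the double sum over skeletons and path families into a single sum over $\mcp_e(D)$. The specialization to $k=n$ is handled identically.
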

\begin{proof}
  Let $\lambda = k1^{n-k}$.
  By Proposition~\ref{p:skeleton}, Corollary~\ref{c:skeleton},
  and Proposition~\ref{p:Kali},
  we have
  \begin{equation*}
    \begin{aligned}
    \imm{\chi^\lambda}(A) &= \sum_K \wgt(K) \chi^\lambda(z(K))\\
    &= \sum_K \wgt(K) \nTksp \sum_{\pi \in \Pi_e(K)} \nTksp \chi^\lambda(P(\pi))\\
    &= \sum_K \wgt(K) \nTksp \sum_{\pi \in \Pi_e(K)} \nTksp
    \# \tnsp \text{standard $P(\pi)$-tableaux of shape $\lambda$},
    \end{aligned}
  \end{equation*}
  where $K$ varies over all bijective skeletons in $D$.
  This is equal to the claimed expression.
\end{proof}
The case $k=n$ (\ref{eq:introdes}) can also be deduced
from Stanley's interpretation~\cite[Thm.\,3.3]{StanSymm}
of $\chi^n(\inc(P))$ as the
number of acyclic orientations of $\inc(P)$,
using the bijection at the end of
the proof of Theorem~\ref{t:etalambdaP}.

Returning to Corollary~\ref{c:tnnimplications},
we see that for all $\lambda \vdash n$,
Haiman's result~\cite[Lem.\,1.1]{HaimanHecke}
that $\chi_q^\lambda(\wtc wq) \in \mathbb N[q]$ for
all
$w \in \sn$
implies Stembridge's result
~\cite[Cor.\,3.3]{StemConj}
that $\imm{\chi^\lambda}(A) \geq 0$
for all $A$ totally nonnegative,
which in turn implies Gasharov's result~\cite{GashInc}
that $\chi^\lambda(\inc(P)) \geq 0$
for all $(\mathbf3 + \mathbf1)$-free posets $P$.
The failure of the inequality
$\chi^\lambda(\inc(P)) \geq 0$ to hold for all posets $P$
and the equation (\ref{eq:propcorcomb}) suggest
that one might solve Problem~\ref{p:stemimm}
by explaining why
for each bijective skeleton $K$ in a planar network,
we have
\begin{equation*}
  \sum_{\pi \in \Pi_e(K)} \nTksp \chi^\lambda(\inc(P(\pi)) \geq 0,
  \qquad \text{equivalently,} \qquad
  \sum_{\pi \in \Pi_e(K)} \nTksp \ntnsp X_{\inc(P(\pi))} \text{ is Schur-positive},
\end{equation*}
even when some of the posets $\{P(\pi) \,|\, \pi \in \Pi_e(K)\}$
are not ($\mathbf3 + \mathbf1$)-free.

\ssec{The permanent and induced trivial characters}\label{ss:perm}


An obvious consequence of
Proposition~\ref{p:skeleton}
is a combinatorial interpretation of
the permanent of a totally nonnegative matrix.

\begin{obs}\label{o:perm}
  Let totally nonnegative matrix $A$ be the path matrix of
  planar network $D$.  Then we have
  \begin{equation*}
    \perm(A) =
    \ntksp \sum_{\pi \in \mcp(D)} \ntksp \wgt(\pi),
  \end{equation*}
  where
  $\wgt(\pi)$ is defined as in (\ref{eq:wgt}).
\end{obs}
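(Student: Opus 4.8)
The plan is to establish the identity by a direct expansion from the definitions, using no nonintersection hypothesis on the path families whatsoever; this absence of cancellation is precisely what separates the permanent from the determinant of Theorem~\ref{t:lin}, where the sign annihilates every family that is not nonintersecting. Combined with Theorem~\ref{t:tnnconv}, the resulting formula then also describes $\perm(A)$ for an arbitrary totally nonnegative $A$.

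First I would write $\perm(A) = \sum_{w \in \sn} a_{1,w_1} \cdots a_{n,w_n}$ and recall that, by the definition of the path matrix, each entry $a_{i,j}$ equals the sum of $\wgt(\rho)$ over all directed paths $\rho$ from $s_i$ to $t_j$ in $D$. Substituting and expanding the product over $i$, the summand indexed by $w$ becomes $\sum \wgt(\pi_1) \cdots \wgt(\pi_n)$, the sum ranging over all tuples $\pi = (\pi_1,\dotsc,\pi_n)$ in which $\pi_i$ is a path from $s_i$ to $t_{w_i}$ for each $i$; by the definitions of $\mcp_w(D)$ and of $\wgt(\pi)$ in (\ref{eq:wgt}), this is exactly $\sum_{\pi \in \mcp_w(D)} \wgt(\pi)$. (Equivalently, one may observe that $\perm(A) = \imm{\eta^n}(A)$ and invoke the framework of Section~\ref{s:tnn}, but the bare expansion is shorter.)

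Finally I would sum over $w \in \sn$. Since every path family in $D$ has a unique \emph{type}, the sets $\mcp_w(D)$ are pairwise disjoint with union $\mcp(D)$, so the double sum collapses to $\sum_{\pi \in \mcp(D)} \wgt(\pi)$, which is the claim. I anticipate no genuine obstacle here: the statement is a one-line unwinding of definitions, recorded because it sets up the interpretations of the permanent that follow. The only points needing a little care are the bookkeeping of the disjoint union $\mcp(D) = \cup_{w \in \sn} \mcp_w(D)$ and matching the product of path weights produced by expanding the permanent with the definition of $\wgt(\pi)$.
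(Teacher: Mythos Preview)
Your proposal is correct and matches the paper's treatment: the paper records this as an Observation without proof, calling it ``an obvious consequence'' of the preceding setup, and your direct expansion of $\perm(A) = \sum_{w} a_{1,w_1}\cdots a_{n,w_n}$ via the definition of the path matrix is exactly the intended one-line unwinding.
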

Two more combinatorial interpretations
make use of the partial orders $\{P(\pi) \,|\, \pi \in \mcp_e(D) \}$.
We have shown in Theorem~\ref{t:hookimm} and will show in
Theorem~\ref{t:excfree} that
  \begin{align}
    \perm(A)
    &= \nTksp \sum_{\pi \in \mcp_e(D)} \nTksp
    \wgt(\pi) \, (\#\text{$P(\pi)$-descent-free permutations of $\pi$})
    \label{eq:introdes}\\
    &= \nTksp \sum_{\pi \in \mcp_e(D)} \nTksp
    \wgt(\pi) \, (\#\text{$P(\pi)$-excedance-free permutations of $\pi$}).
    \label{eq:introexc}
  \end{align}
  Unfortunately we do not know how to
  obtain bijective proofs of these facts
  directly from Observation~\ref{o:perm}.
\bp\label{p:permbijection}
Given a planar network $D$ of order $n$, state an explicit bijection between
path families in $D$ of arbitrary type,
and $P$-descent-free $P(\pi)$-permutations, where $\pi$ in $D$ has type $e$,
or $P$-excedance-free $P(\pi)$-permutations, where $\pi$ in $D$ has type $e$,
\begin{equation*}
  \begin{gathered}
  \mcp(D)
  \qquad
  \overset{1-1}\longleftrightarrow
  \quad 
  \bigcup_{\pi \in \mcp_e(D)} \ntksp
  \{ U\, \text{a $P(\pi)$-permutation}\, \,|\, \des_{P(\pi)}(U) = 0 \},\\
  \mcp(D)
  \qquad
  \overset{1-1}\longleftrightarrow
  \quad
  \bigcup_{\pi \in \mcp_e(D)} \ntksp
  \{ U\, \text{a $P(\pi)$-permutation}\, \,|\, \exc_{P(\pi)}(U) = 0 \}.
  \end{gathered}
\end{equation*}
\ep

On the other hand,
Theorem~\ref{t:hookimm} and
Proposition~\ref{p:equidist}
easily prove (\ref{eq:introexc}).

\begin{thm}\label{t:excfree}
  Let $A$ be the path matrix of planar network $D$ of order $n$,
  Then we have
  \begin{equation*}
    \perm(A) = \sum_{\pi \in \mcp_e(D)} \wgt(\pi)
    (\# \text{ $P(\pi)$-excedance-free permutations of $\pi$}).
  \end{equation*}
\end{thm}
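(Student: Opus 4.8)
The plan is to derive the excedance-free formula directly from the descent-free formula~(\ref{eq:introdes}) by invoking the equidistribution result of Proposition~\ref{p:equidist}. Recall that~(\ref{eq:introdes}) is a consequence of the $k=n$ case of Theorem~\ref{t:hookimm}: since $\chi^n$ is the trivial character of $\sn$ we have $\imm{\chi^n}(A) = \perm(A)$, and a standard $P$-tableau of shape $(n)$ (a single row, on which column-strictness is vacuous) is nothing but a $P$-descent-free $P$-permutation, so Theorem~\ref{t:hookimm} specializes to
\begin{equation*}
  \perm(A) = \sum_{\pi \in \mcp_e(D)} \wgt(\pi)\,
  (\# \text{ $P(\pi)$-descent-free permutations of $\pi$}).
\end{equation*}

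Now for each $\pi \in \mcp_e(D)$ I would apply Proposition~\ref{p:equidist} to the poset $P(\pi)$: the $P(\pi)$-permutation statistics $\pdes$ and $\pexc$ are equally distributed on the set of all $P(\pi)$-permutations, so in particular the number of $P(\pi)$-permutations with $\pdes = 0$ equals the number with $\pexc = 0$. Substituting this equality term by term into the display above gives
\begin{equation*}
  \perm(A) = \sum_{\pi \in \mcp_e(D)} \wgt(\pi)\,
  (\# \text{ $P(\pi)$-excedance-free permutations of $\pi$}),
\end{equation*}
which is the assertion of the theorem (equivalently,~(\ref{eq:introexc})).

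I expect no real obstacle here: the only points needing care are the identification of the permanent with the immanant $\imm{\chi^n}$ (so that Theorem~\ref{t:hookimm} applies at $k=n$) and the elementary observation that two statistics equidistributed on a finite set have zero-fibers of equal cardinality. All of the substantive content---the bijective-skeleton decomposition of $\imm\theta(A)$ in Proposition~\ref{p:skeleton} and Corollary~\ref{c:skeleton}, and the hook-shape character evaluation of Proposition~\ref{p:Kali}---is already packaged inside Theorem~\ref{t:hookimm}, so the present statement is essentially a one-line corollary of that theorem together with Proposition~\ref{p:equidist}.
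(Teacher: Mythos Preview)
Your proposal is correct and matches the paper's own proof essentially line for line: the paper also observes that Theorem~\ref{t:hookimm} at $k=n$ yields~(\ref{eq:introdes}), and then invokes Proposition~\ref{p:equidist} to convert the $P(\pi)$-descent-free count into the $P(\pi)$-excedance-free count for each $\pi$.
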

\begin{proof}
  Theorem~\ref{t:hookimm} implies (\ref{eq:introdes}), and
  Proposition~\ref{p:equidist} then implies (\ref{eq:introexc}).
\end{proof}

Now we have three combinatorial interpretations of induced trivial character
immanants.  To state these, we define more generalizations of Young tableaux.
Given a path family $\pi = (\pi_1,\dotsc,\pi_n) \in \mcp(D)$
for some planar network $D$,
define a {\em $\pi$-tableau of shape $\lambda$} to be a filling of a Young
diagram with $\pi_1,\dotsc,\pi_n$.
For each $\pi$-tableau $U$, define $L(U)$ and $R(U)$ to be the Young tableaux
whose integer entries are the source indices and sink indices, respectively,
of the corresponding paths in $U$.
Call the $\pi$-tableau $U$ {\em left row-strict}
if entries of $L(U)$ increase from left to right in each row,
and call it
{\em row-closed} if $R(U_i)$ is a rearrangement of $L(U_i)$ for each $i$.
For example,
consider the planar network $D$ and path family $\pi \in \mcp(D)$,
\begin{equation}\label{eq:dpi}
  D =
\begin{tikzpicture}[scale=.55,baseline=28]
  \node at (-.5,4) {$s_5$};  \node at (4.5,4) {$t_5$};
  \node at (-.5,3) {$s_4$};  \node at (4.5,3) {$t_4$};
  \node at (-.5,2) {$s_3$};  \node at (4.5,2) {$t_3$};
  \node at (-.5,1) {$s_2$};  \node at (4.5,1) {$t_2$};
  \node at (-.5,0) {$s_1$};  \node at (4.5,0) {$t_1$};
\draw[-,thick] (0,4) -- (4,0);
\draw[-,thick] (0,3) -- (1,4) -- (4,4);
\draw[-,thick] (0,2) -- (1,2) -- (2,3) -- (4,3);
\draw[-,thick] (0,1) -- (2,1) -- (3,2) -- (4,2);
\draw[-,thick] (0,0) -- (3,0) -- (4,1);
\end{tikzpicture}\,,
\qquad \qquad
  \begin{tikzpicture}[scale=.55,baseline=28]
  \node at (-.5,4) {$\pi_5$};  
  \node at (-.5,3) {$\pi_4$};  
  \node at (-.5,2) {$\pi_3$};  
  \node at (-.5,1) {$\pi_2$};  
  \node at (-.5,0) {$\pi_1$};  
\draw[-, ultra thick, dotted] (0,4) -- (1.5, 2.5) -- (2,3) -- (4,3);
\draw[-, thin] (0,3) -- (1,4) -- (4,4);
\draw[-, ultra thick] (0,2) -- (1,2) -- (1.5, 2.5) -- (4,0);
\draw[-, very thick, densely dotted] (0,1) -- (2,1) -- (3,2) -- (4,2);
\draw[-, thick, dashed] (0,0) -- (3,0) -- (4,1);
\end{tikzpicture}\,\,.
\end{equation}
In order for a $\pi$-tableau to be row-closed and left row-strict,
the paths $\pi_1, \pi_2, \pi_3$ must appear in order of increasing indices
in the same row, as must $\pi_4, \pi_5$.
All five paths must appear in order of
increasing indices if they appear in a single row.
There is one such $\pi$-tableau
of shape $5$ and one of shape $32$.
Together with their left and right Young tableaux,
these are
\begin{equation*}
  \begin{alignedat}{3}
    U \ntnsp&= {\tableau[scY]{\pi_1,\pi_2,\pi_3,\pi_4,\pi_5}}\,,
    \phantom{\Bigg|}
    &\qquad
  L(U) \ntnsp&= {\tableau[scY]{1,2,3,4,5}}\,,
  &\qquad
  R(U) \ntnsp&= {\tableau[scY]{2,3,1,5,4}}\,, \\
  V \ntnsp&= {\tableau[scY]{\pi_4,\pi_5|\pi_1,\pi_2,\pi_3}}\,,
  &\qquad
  L(V) \ntnsp&= {\tableau[scY]{4,5|1,2,3}}\,,
  &\qquad
  R(V) \ntnsp&= {\tableau[scY]{5,4|2,3,1}}\,.
  \end{alignedat}
\end{equation*}

\begin{thm}\label{t:etaimm}
  Let $A$ be the path matrix of planar network $D$ of order $n$.
  Then for $\lambda \vdash n$, the evaluation $\imm{\eta^\lambda}(A)$ has
  the combinatorial interpretations
  \begin{align}
    &\sum_{\pi \in \mcp(D)} \nTksp \wgt(\pi)\,
    (\# \text{row-closed, left row-strict $\pi$-tableaux of shape $\lambda$}),
    \label{eq:etaimm1}\\
    &\sum_{\pi \in \mcp_e(D)} \nTksp \wgt(\pi)\,
    (\# \text{descent-free $P(\pi)$-tableaux of shape $\lambda$}),
    \label{eq:etaimm2}\\
    &\sum_{\pi \in \mcp_e(D)} \nTksp \wgt(\pi)\,
    (\# \text{excedance-free $P(\pi)$-tableaux of shape $\lambda$}).
    \label{eq:etaimm3}
  \end{align}
\end{thm}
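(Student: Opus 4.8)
The plan is to treat the three interpretations in two groups. Interpretations (\ref{eq:etaimm2}) and (\ref{eq:etaimm3}) should follow from the skeleton machinery of this section together with Theorem~\ref{t:etalambdaP}, while (\ref{eq:etaimm1}) should be obtained directly from the Littlewood--Merris--Watkins identity (\ref{eq:lmw2}).

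For (\ref{eq:etaimm2}) and (\ref{eq:etaimm3}), I would specialize (\ref{eq:propcorcomb}) to $\theta = \eta^\lambda$, obtaining $\imm{\eta^\lambda}(A) = \sum_K \wgt(K)\sum_{\pi \in \Pi_e(K)} \eta^\lambda(\inc(P(\pi)))$ with $K$ ranging over the bijective skeletons of $D$. Since each type-$e$ path family lies in $\Pi_e(K)$ for a unique bijective skeleton $K$, with $\bigcup_K \Pi_e(K) = \mcp_e(D)$ and $\wgt(\pi) = \wgt(K)$ whenever $\pi \in \Pi_e(K)$, the double sum collapses to $\imm{\eta^\lambda}(A) = \sum_{\pi \in \mcp_e(D)} \wgt(\pi)\,\eta^\lambda(\inc(P(\pi)))$. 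Now Theorem~\ref{t:etalambdaP}(1) rewrites $\eta^\lambda(\inc(P(\pi)))$ as the number of $P(\pi)$-descent-free $P(\pi)$-tableaux of shape $\lambda$, giving (\ref{eq:etaimm2}), and part~(2) of that theorem rewrites it as the number of $P(\pi)$-excedance-free such tableaux, giving (\ref{eq:etaimm3}). The essential input here is that Theorem~\ref{t:etalambdaP} holds for \emph{every} poset, in particular for the posets $P(\pi)$, which need not be $(\mathbf3+\mathbf1)$-free.

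For (\ref{eq:etaimm1}), I would start from (\ref{eq:lmw2}), $\imm{\eta^\lambda}(A) = \sum_{(I_1,\dotsc,I_r)} \perm(A_{I_1,I_1})\cdots\perm(A_{I_r,I_r})$, the sum over ordered set partitions of $[n]$ of type $\lambda$. Expanding $\perm(A_{I_j,I_j}) = \sum_{\tau: I_j \to I_j} \prod_{i \in I_j} a_{i,\tau(i)}$ and each matrix entry $a_{i,k}$ as a sum of weights of paths $s_i \to t_k$ shows that $\perm(A_{I_j,I_j})$ is the sum of $\wgt(\sigma^{(j)})$ over families $\sigma^{(j)} = (\gamma_i)_{i \in I_j}$ of paths in $D$ with $\gamma_i$ running from $s_i$ and with terminal sinks exactly $\{t_i : i \in I_j\}$, each used once. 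Substituting, $\imm{\eta^\lambda}(A)$ becomes $\sum \prod_j \wgt(\sigma^{(j)})$ over tuples $((I_1,\dotsc,I_r),\sigma^{(1)},\dotsc,\sigma^{(r)})$; the union $\pi = \sigma^{(1)}\cup\cdots\cup\sigma^{(r)}$ lies in $\mcp(D)$, say with type $w$, and $\wgt(\pi) = \prod_j \wgt(\sigma^{(j)})$. Grouping terms by $\pi$, the tuples producing a fixed $\pi$ are exactly the ordered set partitions $(I_1,\dotsc,I_r)$ of type $\lambda$ with $w(I_j) = I_j$ for all $j$. I would then observe that such a datum is precisely a left row-strict $\pi$-tableau $U$ of shape $\lambda$ whose row $i$ has source set $I_i$ (left-row-strictness uniquely prescribes the order of the paths within each row from their source indices), and that the condition $w(I_i) = I_i$ says exactly that the sinks occurring in row $i$ coincide with its sources, i.e. that $U$ is row-closed. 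Hence $\imm{\eta^\lambda}(A) = \sum_{\pi \in \mcp(D)} \wgt(\pi)\,(\#\,\text{row-closed, left row-strict $\pi$-tableaux of shape }\lambda)$, which is (\ref{eq:etaimm1}).

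No step is a genuine obstacle; the point requiring the most care is the last identification in the (\ref{eq:etaimm1}) argument --- verifying that ``left row-strict'' exactly eliminates the freedom to reorder paths within a row (so that a tableau carries no more information than the ordered set partition $(I_1,\dotsc,I_r)$) and that ``row-closed'' is literally the invariance $w(I_i)=I_i$ that makes each row a sub-path-family counted by a permanent. One must also keep in mind that when $\lambda$ has repeated parts the rows of a $\pi$-tableau remain distinguishable, which matches the \emph{ordered} set partitions appearing in (\ref{eq:lmw2}). As a sanity check, the case $\lambda = (n)$ recovers the interpretations (\ref{eq:introdes}) and (\ref{eq:introexc}) of Theorem~\ref{t:excfree}, and for (\ref{eq:etaimm1}) it recovers Observation~\ref{o:perm}, since a single-row $\pi$-tableau is automatically row-closed.
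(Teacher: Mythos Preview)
Your proposal is correct. For (\ref{eq:etaimm1}) your argument is essentially identical to the paper's: both start from (\ref{eq:lmw2}), expand each permanent as a sum over path families whose sink set within each block equals its source set, and identify the resulting data with row-closed, left row-strict $\pi$-tableaux.

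For (\ref{eq:etaimm2}) and (\ref{eq:etaimm3}) you take a slightly different and somewhat cleaner route than the paper. The paper stays with the Littlewood--Merris--Watkins identity (\ref{eq:lmw2}) and applies the single-row permanent interpretations (Theorems~\ref{t:hookimm} and \ref{t:excfree}) to each factor $\perm(A_{I_j,I_j})$ separately, then reassembles over ordered set partitions. You instead invoke (\ref{eq:propcorcomb}) once with $\theta=\eta^\lambda$ and then apply Theorem~\ref{t:etalambdaP} directly to each poset $P(\pi)$. Your approach avoids redoing the row-by-row assembly that Theorem~\ref{t:etalambdaP} already packages, while the paper's approach keeps all three parts of the proof running through (\ref{eq:lmw2}) in parallel. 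Both are short and valid; yours makes more visible that (\ref{eq:etaimm2}) and (\ref{eq:etaimm3}) are immediate corollaries of the general trace machinery plus the poset-level result.
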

\begin{proof}
  Express $\imm{\eta^\lambda}(A)$ as in (\ref{eq:lmw2}).
  By Observation~\ref{o:perm},
  the term in this sum
  corresponding to a fixed ordered set partition $(I_1,\dotsc,I_r)$
  is equal to the sum of weights of path families
  $\pi = (\pi_1,\dotsc,\pi_n)$ in which for $j=1,\dotsc,r$,
  paths indexed by $I_j$
  have sink indices also belonging to $I_j$. 
  Each such partitioned path family $\pi$ naturally forms a left row-strict,
  row-closed $\pi$-tableau
  $U = U(\pi,I_1,\dotsc,I_r)$ of shape
  $\lambda$, if we place paths indexed by $I_j$ into row $j$,
  with path indices increasing from left to right.
  We may therefore express $\imm{\eta^\lambda}(A)$ as
  \begin{equation*}
    \sum_K \wgt(K)
    \nTksp \sum_{\pi \in \Pi(K)} \nTksp \#
    \{ (I_1,\dotsc,I_r) \,|\, U(\pi,I_1,\dotsc,I_r)
    \text{ is row-closed, left row-strict of shape $\lambda$} \},
  \end{equation*}
  i.e., as (\ref{eq:etaimm1}).
  Alternatively, by Theorems~\ref{t:hookimm} and \ref{t:excfree},
  the term in (\ref{eq:lmw2})
  corresponding to a fixed ordered set partition $(I_1,\dotsc,I_r)$
  is equal to
  \begin{equation*}
    \begin{aligned}
    &\sum_K \wgt(K) \nTksp\sum_{\pi \in \Pi_e(K)}\nTksp \#\{ (U_1,\dotsc,U_r) \,|\,
    U_j \text{ a descent-free permutation of $(\pi_i)_{i \in I_j}$} \}\\
    = &\sum_K \wgt(K) \nTksp\sum_{\pi \in \Pi_e(K)}\nTksp \#\{ (U_1,\dotsc,U_r) \,|\,
    U_j \text{ an excedance-free permutation of $(\pi_i)_{i \in I_j}$} \}.
    \end{aligned}
  \end{equation*}
  Thus $\imm{\eta^\lambda}(A)$
  is also equal to (\ref{eq:etaimm2}) and (\ref{eq:etaimm3}).
\end{proof}

\
\ssec{Power sum immanants}

Like the induced trivial character immanants
$\{ \imm{\eta^\lambda}(A) \,|\, \lambda \vdash n \}$,
the power sum immanants
$\{ \imm{\psi^\lambda}(A) \,|\, \lambda \vdash n \}$
have some combinatorial interpretations which are closely related to
chromatic symmetric function coefficients,
and others which are related to path families in a planar network.
Call a $\pi$-tableau $U$ of shape $\lambda$ {\em cylindrical}
if in each row $U_i = U_{i,1} \cdots U_{i, \lambda_i}$,
we have $R(U_{i,1} \cdots U_{i, \lambda_i}) = L(U_{i,2} \cdots U_{i,\lambda_i} U_{i,1})$,
i.e., each path begins where the preceding path in its row terminates.  
For example,
let $\pi$ be the path family in (\ref{eq:dpi}).
There are six cylindrical $\pi$-tableaux, all of shape $32$:
\begin{equation*}
  \tableau[scY]{\pi_4,\pi_5|\pi_1,\pi_2,\pi_3}\,,\quad
  \tableau[scY]{\pi_5,\pi_4|\pi_1,\pi_2,\pi_3}\,,\quad
  \tableau[scY]{\pi_4,\pi_5|\pi_2,\pi_3,\pi_1}\,,\quad
  \tableau[scY]{\pi_5,\pi_4|\pi_2,\pi_3,\pi_1}\,,\quad
  \tableau[scY]{\pi_4,\pi_5|\pi_3,\pi_1,\pi_2}\,,\quad
  \tableau[scY]{\pi_5,\pi_4|\pi_3,\pi_1,\pi_2}\,.
\end{equation*}

\begin{thm}\label{t:powerimm}
  Let $A$ be the path matrix of planar network $D$ of order $n$.
  Then for $\lambda \vdash n$, the evaluation $\imm{\psi^\lambda}(A)$
  has the combinatorial interpretations
  \begin{align}
    &\sum_{\pi \in \mcp_e(D)} \nTksp \wgt(\pi)\,
    (\# \text{cyclically row-semistrict $P(\pi)$-tableaux of shape $\lambda$}),
    \label{eq:powerimm1} \\
    &\sum_{\pi \in \mcp_e(D)} \nTksp \wgt(\pi)\,
    (\# \text{$P(\pi)$-record-free row-semistrict $P(\pi)$-tableaux
      of shape $\lambda$}),\label{eq:powerimm2}\\
    &\sum_{\pi \in \mcp(D)} \nTksp \wgt(\pi)\,
    (\# \text{cylindrical $\pi$-tableaux of shape $\lambda$}).
    \label{eq:powerimm3}
  \end{align}
\end{thm}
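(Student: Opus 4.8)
The plan is to handle the three interpretations separately: the first two follow from the chromatic-symmetric-function dictionary already assembled, and the third from the explicit defining formula for $\psi^\lambda$. For (\ref{eq:powerimm1}) and (\ref{eq:powerimm2}), first I would apply Equation~(\ref{eq:propcorcomb})---the consequence of Proposition~\ref{p:skeleton} and Corollary~\ref{c:skeleton}---to the power sum trace $\theta = \psi^\lambda \in \trspace n$, obtaining
\[
  \imm{\psi^\lambda}(A) = \sum_{\pi \in \mcp_e(D)} \wgt(\pi)\, \psi^\lambda(\inc(P(\pi))).
\]
Substituting into $\psi^\lambda(\inc(P(\pi)))$ the two combinatorial interpretations supplied by Theorem~\ref{t:psilambdaP}(1) and (2)---the number of cyclically row-semistrict $P(\pi)$-tableaux of shape $\lambda$, and the number of $P(\pi)$-record-free, row-semistrict $P(\pi)$-tableaux of shape $\lambda$---then yields (\ref{eq:powerimm1}) and (\ref{eq:powerimm2}) at once.

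For (\ref{eq:powerimm3}) I would argue directly. Using the definition of the immanant together with the defining formula~(\ref{eq:psidef}) for $\psi^\lambda$,
\[
  \imm{\psi^\lambda}(A) = \sum_{w \in \sn} \psi^\lambda(w)\, a_{1,w_1} \cdots a_{n,w_n}
  = z_\lambda \nTksp \sumsb{w \in \sn \\ \ctype(w) = \lambda} \nTksp a_{1,w_1} \cdots a_{n,w_n}.
\]
Expanding each monomial $a_{1,w_1} \cdots a_{n,w_n}$ as $\sum_{\pi \in \mcp_w(D)} \wgt(\pi)$ and re-indexing over all $\pi \in \mcp(D)$, it then suffices to show that $z_\lambda$ equals the number of cylindrical $\pi$-tableaux of shape $\lambda$ whenever $\pi$ has type $w$ with $\ctype(w) = \lambda$, and that there are no cylindrical $\pi$-tableaux of shape $\lambda$ otherwise. (One could alternatively reach the same point by iterating Corollary~\ref{c:immfactor} down to the single-part case $\imm{\psi^n}(A)$, but the direct route avoids the bookkeeping of reassembling tableaux across the factorization.)

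The main step---and the only real obstacle---is unwinding the cylindrical condition. Reading off the source and sink indices, a row $U_{i,1} \cdots U_{i,\lambda_i}$ of a cylindrical $\pi$-tableau consists of paths $\pi_{j_1}, \dotsc, \pi_{j_{\lambda_i}}$ with $w_{j_1} = j_2,\ w_{j_2} = j_3,\ \dotsc,\ w_{j_{\lambda_i}} = j_1$; that is, the row lists one cycle of $w$, written in cyclic order from some chosen starting element. Hence a cylindrical $\pi$-tableau of shape $\lambda$ exists only if the cycle lengths of $w$ are $\lambda_1, \dotsc, \lambda_r$, and in that case specifying one amounts to choosing a length-preserving bijection from the cycles of $w$ to the rows of $\lambda$, of which there are $\alpha_1! \cdots \alpha_n!$ (with $\alpha_i$ the number of parts of $\lambda$ equal to $i$), together with one cyclic starting point in each row, contributing a factor $\lambda_1 \cdots \lambda_r$. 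The product is exactly $z_\lambda$, which establishes (\ref{eq:powerimm3}) and completes the proof.
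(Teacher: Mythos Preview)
Your proposal is correct and follows essentially the same approach as the paper: invoke (\ref{eq:propcorcomb}) and Theorem~\ref{t:psilambdaP} for (\ref{eq:powerimm1})--(\ref{eq:powerimm2}), and use the defining formula~(\ref{eq:psidef}) directly for (\ref{eq:powerimm3}). Your explicit verification that a cylindrical $\pi$-tableau of shape $\lambda$ records a cycle of $\type(\pi)$ in each row, and that the resulting count is $z_\lambda$, is in fact more detailed than the paper's own proof, which simply asserts that (\ref{eq:powerimm3}) ``follows from the definition of path matrix.''
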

  \begin{proof}
  By Proposition~\ref{p:skeleton} and Corollary~\ref{c:skeleton},
  we have
  \begin{equation*}
    \imm{\psi^\lambda}(A) = \sum_K \wgt(K) \psi^\lambda(z(K))
    = \sum_K \wgt(K) \nTksp \sum_{\pi \in \Pi_e(K)} \nTksp \psi^\lambda(\inc(P(\pi))).
  \end{equation*}
  Thus by Proposition~\ref{t:psilambdaP} we have the interpretations
  (\ref{eq:powerimm1}) and (\ref{eq:powerimm2}).
  By (\ref{eq:psidef}), we have
  \begin{equation*}
    \imm{\psi^\lambda}(A) = z_\lambda
    \nTksp\ntksp \sumsb{w \in \sn\\ \ctype(w) = \lambda} \nTksp\ntksp
    a_{1,w_1} \cdots a_{n,w_n}.
  \end{equation*}
  The interpretation (\ref{eq:powerimm3})
  now follows from the definition of path matrix.
  \end{proof}
  


\ssec{Monomial immanants}

By Corollary~\ref{c:tnnimplications}, the fact that
we do not know $\phi^\lambda(\inc(P))$ to be nonnegative for unit interval orders
$P$ (\cite[Conj.\,5.5]{StanStemIJT})
implies that we do not know monomial immanants to evaluate nonnegatively
on totally nonnegative matrices.
We have the following conjecture of Stembridge~\cite[Conj.\,2.1]{StemConj}.
\begin{conj}\label{c:monimm}
  For $\lambda \vdash n$ and $A$ totally nonnegative we have
  $\imm{\phi^\lambda}(A) \geq 0$.
\end{conj}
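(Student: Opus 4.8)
The plan is to push Conjecture~\ref{c:monimm} through the planar-network dictionary already assembled in this section and then to confront the combinatorial core directly. First, for $D$ a planar network with path matrix $A$, equation~(\ref{eq:propcorcomb}) together with Corollary~\ref{c:skeleton} gives
\[
  \imm{\phi^\lambda}(A) = \sum_K \wgt(K)\,\phi^\lambda(z(K)),
\]
the sum over bijective skeletons $K$, with each $\wgt(K)\ge 0$; since every totally nonnegative $A$ is such a path matrix by Theorem~\ref{t:tnnconv}, it suffices to show $\phi^\lambda(z(K))\ge 0$ for each $K$. Each $z(K)$ is a positive rational multiple of a product $\wtc{w^{(1)}}1\cdots\wtc{w^{(k)}}1$ of Kazhdan--Lusztig basis elements indexed by maximal elements of parabolic subgroups, and such a product expands with nonnegative coefficients in the Kazhdan--Lusztig basis (\cite[Appendix]{HaimanHecke}). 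So the whole conjecture reduces to statement~(1) of Corollary~\ref{c:tnnimplications}: that $\phi^\lambda(\wtc u1)\ge 0$ for every $u\in\sn$.

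Before attacking that, one should note it is genuinely hard: already its restriction to $312$-avoiding $u$ is, via Proposition~\ref{p:uioto312avoid} and the bijection~(\ref{eq:uioto312avoid}), the assertion $\phi^\lambda(\inc(P))\ge 0$ for all unit interval orders $P$ --- conditions~(5) and~(7) of Corollary~\ref{c:tnnimplications} --- i.e.\ the Stanley--Stembridge conjecture~(\ref{eq:phipos}). The natural program is therefore to produce a tableau model for $\phi^\lambda(\inc(P))$ valid for all posets $P$. The two extreme shapes are understood: $\phi^{1^n}=\epsilon^n$ counts column-strict $P$-tableaux by~(\ref{eq:epsilonlambdaP}), and $\phi^n=\psi^n$ counts cyclically row-semistrict (equivalently $P$-record-free, row-semistrict) $P$-permutations by Theorem~\ref{t:psilambdaP}. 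Crucially, unlike the homogeneous and power-sum cases, $m_\lambda$ does \emph{not} factor as $m_{\lambda_1}\cdots m_{\lambda_r}$, so Lemma~\ref{l:efactor} yields no reduction of a general shape to one-row shapes and a model must be built for all $\lambda$ at once. The route I would try is to start from the signed expansion $m_\lambda=\sum_\mu c_{\lambda,\mu}e_\mu$ (an inclusion--exclusion sum over set partitions coarsening $\lambda$), substitute the column-strict $P$-tableau interpretation of each $\epsilon^\mu(\inc(P))$, and construct a sign-reversing involution on the resulting signed family of partitioned column-strict fillings, in the spirit of the special-ribbon argument used for Proposition~\ref{p:Kali}; the survivors should be the desired $P$-tableaux, after which one checks that the model collapses correctly at $\lambda=1^n$ and at $\lambda=n$.

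The step I expect to be the main obstacle is precisely the construction of that sign-reversing involution. A successful one would, in particular, establish $e$-positivity of $X_{\inc(P)}$ for all unit interval orders $P$ --- the Stanley--Stembridge conjecture, via \cite[Thm.\,5.1]{GPModRel} --- a longstanding open problem, so all of the genuine difficulty is concentrated here. Moreover, the Question following Proposition~\ref{p:stanksources} shows that the two most obvious candidate models (standard cyclically-row-semistrict, and standard $P$-record-free, $P$-tableaux) do not even produce the correct counts, so an essentially new combinatorial object is needed.

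A second line of attack worth recording is on the Hecke-algebra side: Haiman's positivity $\chi_q^\mu(\wtc wq)\in\mathbb N[q]$ for all $w\in\sn$, combined with the inverse-Kostka relation $\phi_q^\lambda=\sum_\mu K^{-1}_{\lambda,\mu}\chi_q^\mu$ from~(\ref{eq:mforgotten}), would reduce the problem to understanding the sign pattern of the inverse Kostka numbers against the monomials of the Kazhdan--Lusztig characters $\chi_q^\mu(\wtc wq)$. Making that cancellation combinatorially transparent --- ideally at $q=1$ and uniformly in $w$ --- is again exactly the open problem, so I would not expect either route to admit a short proof; the contribution here is the clean reduction to statement~(1) of Corollary~\ref{c:tnnimplications} and the identification of the sign-reversing involution as the sole remaining obstruction.
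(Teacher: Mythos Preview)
The statement you are attempting to prove is presented in the paper as a \emph{conjecture} (Stembridge's Conjecture~2.1 from \cite{StemConj}), not as a theorem; the paper offers no proof and instead lists the known partial results (Proposition~\ref{p:stemmon}) and explains why the obvious tableau model fails. Your proposal is therefore not being compared against a proof in the paper, because there is none.

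Your reduction of $\imm{\phi^\lambda}(A)\ge 0$ to $\phi^\lambda(\wtc u1)\ge 0$ for all $u\in\sn$ is correct but not new: this is exactly the implication $(1)\Rightarrow(4)$ recorded in Corollary~\ref{c:tnnimplications}, and the target statement~(1) is the $q=1$ specialization of Haiman's Conjecture~2.1 from \cite{HaimanHecke}, which is itself open and strictly stronger than Conjecture~\ref{c:monimm}. So the ``clean reduction'' you identify as your contribution is already in the paper, and it reduces one open conjecture to a harder one.

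You are admirably honest that the sign-reversing involution is the entire content and that you do not have it; but that means what you have written is a research outline, not a proof. The paper's own discussion (the Question following Proposition~\ref{p:stanksources}, and the $n=5$ counterexample to the naive cylindrical-tableau model for $\imm{\phi^\lambda}$) already records that the two natural candidate tableau families undercount $\phi^\lambda(\inc(P))$, so the involution you propose to build cannot have either of those as its fixed-point set. In short: there is no gap to name in your argument because there is no argument---only a correct assessment that the conjecture is open and equivalent, via the paper's own Corollary~\ref{c:tnnimplications}, to well-known open problems.
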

This is the strongest possible conjecture for inequalities of the
form $\imm\theta(A) \geq 0$ with $\theta \in \trspace n$, since
  $\imm\theta(A) \geq 0$
  for all totally nonnegative $A \in \mat n (\mathbb R)$
  only if $\theta$ is a nonnegative linear combination of
  $\{ \phi^\lambda \,|\, \lambda \vdash n \}$~\cite[Prop.\,2.3]{StemConj}.
  Stembridge proved two special cases of
  Conjecture~\ref{c:monimm}~\cite[Thms.\,2.7 -- 2.8]{StemConj}.
  \begin{prop}\label{p:stemmon}
    Let $A$ be the path matrix of a planar network $D$ of order $n$.
    \begin{enumerate}
    \item If $\lambda = 21^{n-2}$ then $\imm{\phi^\lambda}(A) \geq 0$.
    \item If $\lambda$ is the rectangular shape $r^k$ then
      \begin{equation}\label{eq:stemmon}
        \imm{\phi^\lambda}(A) = \sum_K \wgt(\pi)
        \ntksp \sum_{\pi \in \Pi(K)} \ntksp
        \#\text{ column-strict, cylindrical } \pi\text{-tableaux of shape } r^k.
      \end{equation}
    \end{enumerate}
  \end{prop}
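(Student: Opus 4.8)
The first assertion can be read off from the tools already in place, while the second is Stembridge's Theorem~2.8 of \cite{StemConj}; I describe how I would prove each. For~(1), I would first record the monomial-coefficient identity $m_{21^{n-2}} = e_{n-1}e_1 - n\,e_n$, which follows in one line from $e_{n-1}e_1 = m_{21^{n-2}} + n\,m_{1^n}$ together with $m_{1^n} = e_n$. Under the Frobenius correspondence this says $\phi^{21^{n-2}} = \epsilon^{(n-1,1)} - n\,\epsilon^n$, where $\epsilon^{(n-1,1)}$ is the sign character of $\sn$ induced from $\mfs{n-1} \times \mfs 1$. Evaluating at $\inc(P)$ by means of Lemma~\ref{l:efactor}(2) and~(\ref{eq:epsilonlambdaP}), and using that for any $m$-element poset $Q$ the number $\epsilon^m(\inc(Q))$ of column-strict $Q$-tableaux of shape $1^m$ equals $1$ if $Q$ is a chain and $0$ otherwise, I obtain
\begin{equation*}
  \phi^{21^{n-2}}(\inc(P)) = \#\{\, j \in [n] : P \ssm \{j\} \text{ is a chain} \,\} - n\,[\, P \text{ is a chain} \,].
\end{equation*}
This is nonnegative for every poset $P$ (if $P$ is a chain both summands equal $n$; otherwise the subtracted term is $0$), so by~(\ref{eq:propcorcomb}) and Theorem~\ref{t:tnnconv}---equivalently, by the implication $(3) \Rightarrow (4)$ of Corollary~\ref{c:tnnimplications}---we get $\imm{\phi^{21^{n-2}}}(A) \ge 0$ for every totally nonnegative matrix $A$.

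For~(2), I would start from the expansion $m_{r^k} = \sum_{\nu \vdash k} \frac{(-1)^{k-\ell(\nu)}}{z_\nu}\, p_{r\nu}$ (the classical $e_k = \sum_\nu \frac{(-1)^{k-\ell(\nu)}}{z_\nu} p_\nu$ with each $x_i$ replaced by $x_i^r$; here $r\nu = (r\nu_1, r\nu_2, \dotsc)$), equivalently the power-sum-trace expansion $\phi^{r^k} = \sum_{\nu \vdash k} \frac{(-1)^{k-\ell(\nu)}}{z_\nu}\, \psi^{r\nu}$. Combining this with~(\ref{eq:psidef}) and the identity $z_{r\nu} = r^{\ell(\nu)} z_\nu$ yields the explicit class-function values
\begin{equation*}
  \phi^{r^k}(w) = \begin{cases} (-1)^{\,k - c(w)}\, r^{\,c(w)} & \text{if $r$ divides every cycle length of $w$,}\\ 0 & \text{otherwise,}\end{cases}
\end{equation*}
where $c(w)$ is the number of cycles of $w$; the specializations $r = 1$ and $k = 1$ recover $\phi^{1^n} = \epsilon^n = \sgn$ and $\phi^n = \psi^n$. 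By Proposition~\ref{p:skeleton}, or directly from the definition of the path matrix, $\imm{\phi^{r^k}}(A) = \sum_K \wgt(K)\, \phi^{r^k}(z(K)) = \sum_K \wgt(K) \sum_{\pi \in \Pi(K)} \phi^{r^k}(\type(\pi))$, so it suffices to prove the skeleton-by-skeleton identity
\begin{equation*}
  \sum_{\pi \in \Pi(K)} \phi^{r^k}(\type(\pi)) = \sum_{\pi \in \Pi(K)} \#\{\text{column-strict, cylindrical $\pi$-tableaux of shape $r^k$}\}.
\end{equation*}
Note that a cylindrical $\pi$-tableau of shape $r^k$ forces $\type(\pi)$ to have cycle type exactly $r^k$ (each row is then a genuine $r$-cycle), so the right side involves only such $\pi$, whereas the left side also involves $\pi$ of cycle type $r\nu$ with $\nu \ne 1^k$, entering with sign $(-1)^{k-\ell(\nu)}$.

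The crux---and the step I expect to be the main obstacle---is to match these two sides. Following Stembridge, one way is to realize the trace $\phi^{r^k}$ (up to the sign twist) as the character of $\sn$ induced from a linear character of the wreath subgroup $C_r \wr \mathfrak{S}_k$, in which $C_r^{\,k}$ acts by $r$-cycles on $k$ blocks of size $r$ and $\mathfrak{S}_k$ permutes the blocks through the sign character, and then to run a Mackey / Littlewood-Merris-Watkins-style computation---parallel to the derivation of~(\ref{eq:lmw1}) and of Theorem~\ref{t:epsilonimm}---expressing $\imm{\phi^{r^k}}(A)$ as a sum over path families partitioned into $k$ cyclic blocks of $r$ paths, with the sign on $\mathfrak{S}_k$ producing exactly the column-strictness between blocks, i.e., a column-strict cylindrical $\pi$-tableau. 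Equivalently, I would build a sign-reversing involution on $\Pi(K)$ that cancels every path family whose cycle type properly refines $r^k$ and leaves, for each surviving $\pi$, precisely the column-strict cylindrical $\pi$-tableaux of shape $r^k$; the delicate bookkeeping---of the $r$ rotations available within each $r$-cycle and of the ordering of the $k$ rows, interacting with the column-strict constraint---is where the real work lies. Once~(\ref{eq:stemmon}) is established, its manifestly nonnegative right-hand side gives $\imm{\phi^{r^k}}(A) \ge 0$, the rectangular case of Conjecture~\ref{c:monimm}, as a corollary.
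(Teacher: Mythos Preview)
The paper does not give its own proof of this proposition: it is simply quoted from Stembridge~\cite[Thms.\,2.7--2.8]{StemConj}.  So there is no ``paper's proof'' to compare against beyond the citation.

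Your argument for~(1) is correct and complete, and in fact it is a genuinely different route from Stembridge's original.  You exploit the chromatic machinery developed in this paper: the identity $m_{21^{n-2}}=e_{n-1}e_1-ne_n$ gives $\phi^{21^{n-2}}=\epsilon^{(n-1,1)}-n\epsilon^n$, and your computation
\[
\phi^{21^{n-2}}(\inc(P))=\#\{j:P\ssm\{j\}\text{ is a chain}\}-n\,[P\text{ is a chain}]\geq 0
\]
holds for \emph{every} poset $P$, so Corollary~\ref{c:tnnimplications} $(3)\Rightarrow(4)$ finishes it.  This is arguably a cleaner derivation than the one available when Stembridge wrote~\cite{StemConj}, precisely because the present paper supplies the bridge from poset nonnegativity to immanant nonnegativity.

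For~(2) your write-up is an outline, not a proof, and you say so honestly.  The preliminary computations are correct: the plethystic identity $m_{r^k}=\sum_{\nu\vdash k}\tfrac{(-1)^{k-\ell(\nu)}}{z_\nu}p_{r\nu}$, the ratio $z_{r\nu}=r^{\ell(\nu)}z_\nu$, the resulting formula for $\phi^{r^k}(w)$, and the observation that column-strict cylindrical $\pi$-tableaux of shape $r^k$ force $\type(\pi)$ to have cycle type exactly $r^k$.  What remains---either the wreath-product realization and the Lindstr\"om-type cancellation producing column-strictness, or the direct sign-reversing involution---is indeed the substantive content of Stembridge's Theorem~2.8, and you have not carried it out.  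Since the paper itself only cites the result, your outline matches the level of detail here; just be clear that part~(2) is a sketch and that the actual involution lives in~\cite{StemConj}.
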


Since
$\phi^{1^n} = \epsilon^n$ and $\phi^n = \psi^n$, one might hope that
the formula (\ref{eq:stemmon})
for $\imm{\phi^\lambda}(A)$, which combines
aspects of
Theorem~\ref{t:lin} and Theorem~\ref{t:powerimm},
might hold for arbitrary $\lambda \vdash n$.
Unfortunately, it does not.
The planar network $D$ in (\ref{eq:dpi})
has path matrix
\begin{equation*}
A = \begin{bmatrix}
  1 & 1 & 0 & 0 & 0 \\
  1 & 1 & 1 & 0 & 0 \\
  1 & 1 & 1 & 1 & 0 \\
  1 & 1 & 1 & 1 & 1 \\
  1 & 1 & 1 & 1 & 1
\end{bmatrix},
\end{equation*}
which satisfies
\begin{equation*}
  \begin{gathered}
  \imm{\phi^5}(A) = 5, \qquad
  \imm{\phi^{41}}(A) = 3, \qquad
  \imm{\phi^{32}}(A) = 7, \qquad
  \imm{\phi^{221}}(A) = 1, \qquad\\
  \imm{\phi^{311}}(A) = \imm{\phi^{2111}}(A) = \imm{\phi^{11111}}(A) = 0.
  \end{gathered}
\end{equation*}
Each of these monomial immanant evaluations,
except for $\imm{\phi^{32}}(A)$,
is consistent with the combinatorial interpretation given in (\ref{eq:stemmon}).
On the other hand, the path family $\pi$ in (\ref{eq:dpi})  
is the unique path family covering $D$ which
can be placed into a column-strict cylindrical path tableau of shape $32$.
There are only four such tableaux:
\begin{equation*}
  \tableau[scY]{\pi_4,\pi_5|\pi_1,\pi_2,\pi_3}\,,\quad
  \tableau[scY]{\pi_5,\pi_4|\pi_1,\pi_2,\pi_3}\,,\quad
  \tableau[scY]{\pi_5,\pi_4|\pi_2,\pi_3,\pi_1}\,,\quad
  \tableau[scY]{\pi_4,\pi_5|\pi_3,\pi_1,\pi_2}\,.
\end{equation*}

The author has found that for $n \leq 5$,
the sets of analogous tableaux for certain planar networks
have cardinalities no greater than the true values of $\imm{\phi^\lambda}(A)$,
where $A$ is the path matrix of the planar network.
Specifically, these planar networks, called {\em descending star networks}
in \cite[\S 3]{CHSSkanEKL}, correspond bijectively to
unit interval orders.
This suggests the following question.
\begin{quest}
  Let $P$ be a unit interval order on $n$ elements,
  and let $D = D(P)$ be the corresponding descending star network
  as defined in \cite[\S 3]{CHSSkanEKL}.
  Do we have for all $\lambda \vdash n$
  that
  \begin{equation*}
    \phi^\lambda(\inc(P)) \geq
    \# \nTksp \bigcup_{\pi \in \mcp(D)} \ntksp \{ U \,|\,
    U \text{ a column-strict, cylindrical $\pi$-tableau of shape $\lambda$} \}?
  \end{equation*}
\end{quest}

As stated in Corollary~\ref{c:tnnimplications},
Stembridge's Conjecture~\ref{c:monimm}
is a special case of Haiman's conjecture~\cite[Conj.\,2.1]{HaimanHecke}
that $\phi_q^\lambda(\wtc wq) \in \mathbb N[q]$ for all $w$.
Furthermore, Haiman's conjecture would imply the equivalence of
seven of the eight statements of Corollary~\ref{c:tnnimplications}.
\begin{obs}
If $\phi_q^\lambda(\wtc wq) \in \mathbb N[q]$ for all $w$, then
statements $(1), (2), (4),\dotsc,(8)$ of
Corollary~\ref{c:tnnimplications} are all equivalent
to the containment
$\theta \in \spn_{\mathbb N}\{ \phi^\lambda \,|\, \lambda \vdash n \}$.
\end{obs}
\begin{proof}
  Assume that $\phi_q^\lambda(\wtc wq) \in \mathbb N[q]$ for all $w$.
  Consider
  $\theta \in \trspace n$ and expand it in the monomial trace basis as
  $\theta = \sum_\lambda a_\lambda \phi^\lambda$.
  If $a_\lambda \geq 0$ for all $\lambda$, then by our assumption we have
  \begin{equation*}
    \theta(\wtc w1) = \sum_\lambda a_\lambda \phi^\lambda(\wtc w1) \geq 0,
  \end{equation*}
  and statement (1) of Corollary~\ref{c:tnnimplications} is true.
  Now suppose that $a_\mu < 0$ for some $\mu \vdash n$ and let $w_\mu$ be
  the maximal element of the Young subgroup $\mfs \mu$.
  Then by (\ref{eq:strongestposs}) \cite[Prop.\,4.1]{HaimanHecke}
  and our assumption we have that $\theta(\wtc{w_\mu}1) < 0$.
  Since $w_\mu$ \avoidsp, statement (8) of the Corollary is false.
  \end{proof}


\ssec{Immanant identities}


Analogous to Corollary~\ref{c:traceid} are three identities
which follow from Corollary~\ref{c:immfactor}.
The third of these is known as Muir's identity.
\begin{cor}\label{c:immid}
  Let $A$ be an $n \times n$ matrix.  We have
  \begin{equation*}
    \begin{gathered}
      n\,\perm(A) = \sum_{i=1}^n \sumsb{J\\ |J|=i}
      \imm{\psi^i}(A_{J,J}) \perm(A_{\,\olj,\;\olj}),\\
      n\det(A) = \sum_{i=1}^n (-1)^{i-1} \ntksp\sumsb{J\\|J|=i}
      \imm{\psi^i}(A_{J,J}) \det(A_{\,\olj, \;\olj}),\\
      \sum_{i=0}^n (-1)^i \ntksp\sumsb{J\\|J|=i}
      \det(A_{J,J}) \perm(A_{\,\olj, \;\olj}) = 0,
    \end{gathered}
  \end{equation*}
  where the sums are over ordered set partitions of type $(i, n-i)$.
\end{cor}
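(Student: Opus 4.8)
The plan is to mirror the proof of Corollary~\ref{c:traceid}, replacing the graph-decomposition formula of Lemma~\ref{l:efactor}~(2) by its matrix counterpart, Corollary~\ref{c:immfactor}. I would begin with the three classical identities in $\Lambda_n$ (valid for $n \geq 1$),
\[
  nh_n = \sum_{i=1}^n p_i h_{n-i}, \qquad
  ne_n = \sum_{i=1}^n (-1)^{i-1} p_i e_{n-i}, \qquad
  \sum_{i=0}^n (-1)^i e_i h_{n-i} = 0,
\]
the first two being Newton-type recursions and the third coming from $E(t)H(-t)=1$.

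Next I would transport these to the trace side via the inverse Frobenius isomorphism, under which $h_i \mapsto \eta^i$, $e_i \mapsto \epsilon^i$, $p_i \mapsto \psi^i$, and multiplication of symmetric functions corresponds to the induction product of traces, $t_1 t_2 \leftrightarrow \theta_1 \otimes \theta_2 \upparrow_{\mfs k \times \mfs{n-k}}^{\sn}$ (the same translation used implicitly in Lemma~\ref{l:efactor}~(2)). The three displays then become identities in $\trspace n$, such as
\[
  n\eta^n = \sum_{i=1}^n \bigl(\psi^i \otimes \eta^{n-i}\bigr)\upparrow_{\mfs i \times \mfs{n-i}}^{\sn},
\]
and likewise for the other two, carrying the signs $(-1)^{i-1}$ and $(-1)^i$ respectively, and with $\epsilon$ in place of $\eta$ where appropriate.

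Finally I would apply the operation $\theta \mapsto \imm\theta(A)$, which is linear in $\theta$, to each trace identity, and expand every induced trace by Corollary~\ref{c:immfactor}, namely $\imm{(\theta_1 \otimes \theta_2)\upparrow}(A) = \sum_{J \subseteq [n]} \imm{\theta_1}(A_{J,J})\imm{\theta_2}(A_{\olj,\olj})$ (only the terms with $|J|=k$ contributing). Recognizing $\imm{\epsilon^m}(B) = \det(B)$ and $\imm{\eta^m}(B) = \perm(B)$ for an $m \times m$ matrix $B$, and collecting terms, produces exactly the three asserted identities; in the third, the summand $i=0$ contributes $+\perm(A)$ since the empty determinant equals $1$. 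I do not anticipate a genuine obstacle here: the statement is a formal consequence of Corollary~\ref{c:immfactor} and the choice of symmetric function identities, so the only points needing care are the correct sign in the third identity (one must use $\sum_i (-1)^i e_i h_{n-i} = 0$, not the unsigned sum) and the boundary term $i=0$. One could alternatively verify the required trace identities directly rather than routing through the Frobenius dictionary, but the latter keeps the argument parallel to Corollary~\ref{c:traceid}.
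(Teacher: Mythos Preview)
Your proposal is correct and follows essentially the same approach as the paper: the paper's proof of Corollary~\ref{c:immid} simply says ``Similar to proof of Corollary~\ref{c:traceid},'' which is exactly what you do, replacing Lemma~\ref{l:efactor}~(2) by its matrix analog Corollary~\ref{c:immfactor}. Your care with the sign in the third identity and the $i=0$ boundary term is appropriate (indeed the paper's own sketch of Corollary~\ref{c:traceid} omits the $(-1)^i$ from the symmetric function identity, which is a minor typo there).
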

\begin{proof}
  Similar to proof of Corollary~\ref{c:traceid}.
\end{proof}

\section{Applications to chromatic quasisymmetric functions}\label{s:quasi}

Shareshian and Wachs~\cite{SWachsChromQ} defined a quasisymmetric extension
$X_{G,q}$ of Stanley's chromatic symmetric function $X_G$ (\ref{eq:chrom}).
Let $G = (V,E)$ be a simple directed graph on vertices labeled $1, \dotsc, n$.
Given a proper coloring $\kappa: V \rightarrow \{1,2,\dotsc, \}$
of $G$,
define $\inv_G(\kappa)$ to be the number of pairs $(i,j) \in E$ with
$i < j$ and $\kappa(i) > \kappa(j)$.
For any composition $\alpha = (\alpha_1, \dotsc, \alpha_\ell) \vDash n$, define
\begin{equation*}
  c(G,\alpha, q) = \nTksp \sumsb{\kappa\\ \type(\kappa) = \alpha} \nTksp
  q^{\inv_G(\kappa)},
\end{equation*}
and let
\begin{equation*}
  M_\alpha = \ntksp \sum_{i_1 < \cdots < i_\ell} \ntksp
  x_{i_1}^{\alpha_1} \cdots x_{i_\ell}^{\alpha_\ell}
\end{equation*}
be the {\em monomial quasisymmetric function} indexed by $\alpha$.
Then we have the definition
\begin{equation}\label{eq:chromq}
  X_{G,q} = \sum_\kappa q^{\inv_G(\kappa)}x_{\kappa(1)} \cdots x_{\kappa(n)}
  = \sum_{\alpha \vDash n} c(G,\alpha,q) M_\alpha,
\end{equation}
where the first sum is over all proper colorings of $G$.
It is easy to see that we have $X_{G,1} = X_G$.

In the special case that $X_{G,q}$ is symmetric,
Proposition~\ref{p:everysymmfn} implies that there is an element
$g \in \mathbb Q(q) \otimes \hnq$ satisyfing
$\epsilon_q^\lambda(g) = c(G,\alpha,q)$
for every rearrangement $\alpha$ of $\lambda$.
Thus for all $\theta_q \in \trspace{n,q}$
we may define
\begin{equation}\label{eq:thetaqG}
  \theta_q(G) \defeq \theta_q(g)
  \end{equation}
to obtain a $q$-extension 
\begin{equation}\label{eq:sumXGq}
  \begin{aligned}
    X_{G,q} &= \sum_{\lambda \vdash n} \epsilon_q^\lambda(G)m_\lambda
    = \sum_{\lambda \vdash n} \eta_q^\lambda(G)f_\lambda
    = \sum_{\lambda \vdash n}\frac{(-1)^{n-\ell(\lambda)}\psi_q^\lambda(G)}{z_\lambda}p_\lambda
    = \sum_{\lambda \vdash n} \chi_q^{\lambda^\tr}(G)s_\lambda\\
    & \qquad = \sum_{\lambda \vdash n} \phi_q^\lambda(G)e_\lambda
    = \sum_{\lambda \vdash n} \gamma_q^\lambda(G)h_\lambda
    \end{aligned}
\end{equation}
of (\ref{eq:sumXG})
and a similar $q$-extension of (\ref{eq:sumomegaXG})
for $\omega X_{G,q}$.


Shareshian and Wachs showed that when $P$ is a unit interval order
labeled as in (\ref{eq:altrespect}), the function $X_{\inc(P),q}$ is in fact
symmetric.
By \cite[Cor.\,7.5]{CHSSkanEKL}, the coefficients
\begin{equation*}
  \epsilon_q^\lambda(\inc(P)),\quad
  \eta_q^\lambda(\inc(P)),\quad
  \chi_q^\lambda(\inc(P)),\quad
  \psi_q^\lambda(\inc(P)),\quad
  \phi_q^\lambda(\inc(P)),\quad
  \gamma_q^\lambda(\inc(P))
  \end{equation*}
in each standard expansion of $X_{\inc(P),q}$ are given by
$\theta_q(\inc(P)) = \smash{\theta_q(\wtc wq)}$ for the 
$312$-avoiding permutation $w$ related to $P$ as in
(\ref{eq:uioto312avoid}).
For interpretations of some of these coefficients, see
\cite[\S 5--10]{CHSSkanEKL},
\cite[\S 5--7]{SWachsChromQF}.
Shareshian and Wachs conjectured~\cite[Conj.\,5.1]{SWachsChromQ}
nonnegativity of the elementary coefficients of $X_{\inc(P),q}$.
This extends the Stanley-Stembridge conjecture~\cite[Conj.\,5.5]{StanStemIJT}
that $\phi^\lambda(\inc(P)) \geq 0$, and is equivalent to
a special case of
Haiman's conjecture~\cite[Conj.\,2.1]{HaimanHecke}
that $\phi_q^\lambda(\wtc wq) \in \mathbb N[q]$ for all $w$.
%
(For progress on this conjecture, see \cite{CMPChromatic}, \cite{HPTPermutationBases} and references there.)
\begin{conj}
  For $P$ a unit interval order labeled as in (\ref{eq:altrespect}),
  we have $\phi_q^\lambda(\inc(P)) \in \mathbb N[q]$.
  \end{conj}

Since $\sum_{\lambda \vdash n} \phi_q^\lambda = \eta_q^n$,
the sum
$\sum_{\lambda \vdash n} \phi_q^\lambda(\inc(P)) = \eta_q^n(\inc(P))$
satisfies two identities which extend (\ref{eq:etanPao})
when $P$ is a unit interval order labeled as in (\ref{eq:altrespect}).
\begin{enumerate}
\item Given an acyclic orientation $O$, define
  $\inv(O)$ to be the number of oriented edges $(j,i)$
  with $j > i$.
  We have
\begin{equation}\label{eq:monsumao}
  \sum_{\lambda \vdash n} \phi_q^\lambda(\inc(P)) =
  \sum_{O} q^{\inv(O)},
\end{equation}
  where the sum on the right is over all acyclic orientations $O$ of
  $\inc(P)$~\cite[Thm.\,5.3]{SWachsChromQ}.
\item Given a $P$-permutation $U$, define
  \begin{equation*}
    \pinv(U) = \# \{ (i,j) \,|\, j > i \text{ (as integers), } j \not >_P i,
    \text{ and $j$ appears to the left of $i$ in $U$} \}.
  \end{equation*} 
  We have
  \begin{equation}\label{eq:monsumdesfree}
    \sum_{\lambda \vdash n} \phi_q^\lambda(\inc(P)) = \sum_U q^{\pinv(U)},
  \end{equation}
  where the sum is over all $P$-descent-free $P$-permutations
  $U$~\cite[Thm.\,6.3]{SWachsChromQ}.
  \end{enumerate}
  It is possible to extend Theorem~\ref{t:etalambdaP} (2)
  to obtain a similar formula for excedance-free $P$-permutations as well.
  Given a $P$-tableau $U$, define
  \begin{equation*}
    \inv(U) = \# \{ (i,j) \,|\, j > i
    \text{ (as integers), and $j$ appears to the left of $i$ in $U$} \}.
  \end{equation*}

  
  \begin{prop}\label{p:sumphi}
    Let $P$ be an $n$-element unit interval order,
    labeled as in (\ref{eq:altrespect}) and
    corresponding by (\ref{eq:uioto312avoid})
    to the $312$-avoiding permutation $w \in \sn$.
    Then we have
    \begin{equation}\label{eq:sumphi}
        \sum_{\lambda \vdash n} \phi_q^\lambda(\inc(P))
        = \sum_{v \leq w} q^{\ell(v)}
        = \sum_U q^{\inv(U)},       
   \end{equation}
    where $U$ in the final sum
    varies over all $P$-excedance-free $P$-permutations.
  \end{prop}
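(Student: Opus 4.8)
The plan is to establish the two displayed equalities in (\ref{eq:sumphi}) separately, using only facts already recorded in the paper. For the first, $\sum_{\lambda \vdash n}\phi_q^\lambda(\inc(P)) = \sum_{v\leq w} q^{\ell(v)}$, I would start from the identity $\sum_{\lambda \vdash n}\phi_q^\lambda = \eta_q^n$ (already used for (\ref{eq:monsumao}) and (\ref{eq:monsumdesfree})) together with the fact that, $P$ being a unit interval order labeled as in (\ref{eq:altrespect}), $X_{\inc(P),q}$ is symmetric and every trace evaluation on it satisfies $\theta_q(\inc(P)) = \theta_q(\wtc wq)$ for $w = w(P)$ the $312$-avoiding permutation of (\ref{eq:uioto312avoid}). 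Hence
\[
  \sum_{\lambda \vdash n}\phi_q^\lambda(\inc(P)) = \eta_q^n(\inc(P)) = \eta_q^n(\wtc wq).
\]
Since $\eta_q^n = \triv_q\upparrow^{\hnq}_{H_{(n)}(q)}$ and the parabolic subalgebra indexed by the one-part partition $(n)$ is all of $\hnq$, the character $\eta_q^n$ is simply the trivial character $\triv_q$ of $\hnq$, sending $T_{s_i}\mapsto q$ and hence $T_v \mapsto q^{\ell(v)}$ for every $v \in \sn$. Expanding $\wtc wq = \sum_{v\leq w} P_{v,w}(q)T_v$ by (\ref{eq:kl}) and using that a $312$-avoiding permutation \avoidsp, so that $P_{v,w}(q)\equiv 1$ for all $v \leq w$, I obtain $\eta_q^n(\wtc wq) = \sum_{v\leq w} P_{v,w}(q)q^{\ell(v)} = \sum_{v\leq w} q^{\ell(v)}$, which is the first equality.

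For the second equality, $\sum_{v\leq w} q^{\ell(v)} = \sum_U q^{\inv(U)}$ with $U$ ranging over $P$-excedance-free $P$-permutations, I would appeal to Proposition~\ref{p:bruhatexcfree}, which identifies $\{ v \in \sn \,|\, \exc_P(v) = 0\}$ with the Bruhat ideal $\{ v \in \sn \,|\, v \leq w\}$. A $P$-permutation on $[n]$ is exactly an element of $\sn$ written in one-line notation, and being $P$-excedance-free is precisely the condition $\exc_P(v) = 0$; so the $P$-excedance-free $P$-permutations are the permutations $v \leq w$. Finally, for $U = v_1\cdots v_n$ the statistic $\inv(U)$ --- the number of value pairs $i < j$ with $j$ to the left of $i$ --- equals the inversion number of $v$, hence $\ell(v)$, so summing $q^{\inv(U)}$ over all such $U$ reproduces $\sum_{v\leq w} q^{\ell(v)}$.

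I do not expect a genuine obstacle here; the argument is an assembly of results already in the paper (the identity $\sum_{\lambda\vdash n}\phi_q^\lambda = \eta_q^n$, the triviality of the $\lambda = (n)$ induced character, the property $P_{v,w}\equiv 1$ for pattern-avoiding $w$, and Proposition~\ref{p:bruhatexcfree}). The one step needing care is the bookkeeping for the second equality: being explicit that a single-row $P$-tableau filled with the elements of $P = [n]$ is the same datum as a permutation in one-line notation, that ``$P$-excedance-free'' is exactly the hypothesis $\exc_P(v) = 0$ of Proposition~\ref{p:bruhatexcfree}, and that the $\inv$ statistic on $P$-tableaux agrees with the Coxeter length under this identification. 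With those dictionaries fixed, both halves of (\ref{eq:sumphi}) follow by direct substitution.
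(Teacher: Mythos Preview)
Your proposal is correct and follows essentially the same argument as the paper's own proof: both use $\sum_\lambda \phi_q^\lambda = \eta_q^n$, the identification $\theta_q(\inc(P)) = \theta_q(\wtc wq)$, the expansion $\wtc wq = \sum_{v\leq w} T_v$ (via $P_{v,w}\equiv 1$ for pattern-avoiding $w$) together with $\eta_q^n(T_v) = q^{\ell(v)}$, and then Proposition~\ref{p:bruhatexcfree} plus $\inv(U) = \ell(v)$ for the second equality. Your version is slightly more explicit in justifying why a $312$-avoiding permutation \avoidsp\ and why $\eta_q^n$ is the trivial character, but the route is the same.
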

  \begin{proof}
    By \cite[Cor.\,7.5]{CHSSkanEKL} and (\ref{eq:kl}) we have
    \begin{equation*}
      \sum_{\lambda \vdash n} \phi_q^\lambda(\inc(P)) =
      \eta_q^n(\inc(P)) = \eta_q^n(\wtc wq) = \eta_q^n\Big(\sum_{v \leq w} T_v\Big).
    \end{equation*}
    Since $\eta_q^n(T_v) = q^{\ell(v)}$, we have the first equality in
    (\ref{eq:sumphi}).  To see the second equality, let
    $U = v_1 \cdots v_n$ be a $P$-permutation.
    Then $U$ appears in the third sum of (\ref{eq:sumphi})
    if and only if $\exc_P(U) = 0$.
    By Corollary~\ref{c:bruhatexcfree},
    this condition is equivalent to $v \leq w$,
    and clearly we have $\ell(v) = \inv(U)$.
  \end{proof}

  Combining Proposition~\ref{p:sumphi} with
  (\ref{eq:monsumao}) and (\ref{eq:monsumdesfree}), we obtain 
    equidistribution results for the three variations of inversion statistics.
    Unfortunately, the map $w \mapsto \sigma(w^{-1})$
    from Subsection~\ref{ss:indtriv},
    which satisfies $\pexc(w) = \pdes(\sigma(w^{-1}))$ does {\em not}
    satisfy $\inv(w) = \pinv(\sigma(w^{-1}))$.
    (Neither does the map in \cite[Rmk.\,4.7]{EinarThesis} mentioned before
    Proposition~\ref{p:equidist}.)
    Furthermore,
    the statistic pairs $(\pexc, \inv)$ and $(\pdes, \pinv)$
    cannot be equidistributed on $\sn$,
    since $\inv$ and $\pinv$ are not equidistributed on $\sn$.
    This suggests the following problem for unit interval orders $P$,
    labeled as in (\ref{eq:altrespect}).
    \bp
    Find a bijection $\varphi$ from descent-free $P$-permutations to
    excedance-free $P$-permutations which satisfies
    $\pinv(U) = \inv(\varphi(U))$.
    \ep
    Just as Proposition~\ref{p:stanksources} refines (\ref{eq:etanPao}) and
    (\ref{eq:etanPdfree}),
    Shareshian and Wachs~\cite[Thm.\,5.3]{SWachsChromQ}
    refined (\ref{eq:monsumao}), (\ref{eq:monsumdesfree}) as follows.
  \begin{prop}\label{p:swksources}
    For all unit interval orders $P$
    labeled as in (\ref{eq:altrespect}),
    we have that
  \begin{equation*}
   \sumsb{\lambda\vdash n\\ \ell(\lambda) = k} \phi_q^\lambda(\inc(P))
   = \sum_U q^{\pinv(U)}
   = \sum_{O} q^{\inv(O)},
  \end{equation*}
  where the second and third sum are over descent-free $P$-permutations $U$
  having $k$ $P$-records, and
  acyclic orientations $O$ of $\inc(P)$ having $k$ sources.
\end{prop}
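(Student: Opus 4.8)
\emph{Proof proposal.} The plan is to pass to the $312$-avoiding permutation attached to $P$ and then prove the two equalities of the displayed identity separately: ``first $=$ third'' by $q$-tracking the acyclic-orientation argument behind Proposition~\ref{p:stanksources}, and ``third $=$ second'' by refining the bijection used in the proof of Theorem~\ref{t:etalambdaP}. First, label $P$ as in (\ref{eq:altrespect}) and let $w = w(P)$ be the $312$-avoiding permutation of (\ref{eq:uioto312avoid}). Since $P$ is a unit interval order, $X_{\inc(P),q}$ is symmetric, and by \cite[Cor.\,7.5]{CHSSkanEKL} we have $\phi_q^\lambda(\inc(P)) = \phi_q^\lambda(\wtc wq)$ for every $\lambda$, so that $X_{\inc(P),q} = \sum_{\lambda\vdash n}\phi_q^\lambda(\inc(P))\,e_\lambda$ is an honest elementary expansion in $\mathbb{Q}(q)\otimes\Lambda_n$.

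For the equality of the first and third sums I would apply to this expansion the $\mathbb{Q}(q)$-algebra homomorphism $\mathbb{Q}(q)\otimes\Lambda\to\mathbb{Q}(q)[t]$ sending $e_j\mapsto t$ for all $j\ge 1$; its image is $\sum_{\lambda\vdash n}\phi_q^\lambda(\inc(P))\,t^{\ell(\lambda)}$, whose coefficient of $t^k$ is the left-hand sum in the proposition. The statement that this image also equals $\sum_{O} q^{\inv(O)}\,t^{(\#\text{ sources of }O)}$, the sum over all acyclic orientations $O$ of $\inc(P)$, is \cite[Thm.\,5.3]{SWachsChromQ}; one obtains it exactly as in Stanley's proof of the $q=1$ case (Proposition~\ref{p:stanksources}), by running the inclusion--exclusion (deletion--contraction) that computes the elementary expansion of $X_{\inc(P),q}$ while keeping track of inversions, the symmetry of $X_{\inc(P),q}$ being exactly what makes the specialization legitimate. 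Extracting the coefficient of $t^k$ gives the first $=$ third equality, and at $q=1$ it recovers Proposition~\ref{p:stanksources}.

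It remains to match the third sum with the second, which is the new combinatorial input. I would restrict the bijection from the proof of Theorem~\ref{t:etalambdaP} to the shape $\lambda = (n)$, in which case $\oisp_{(n)}(\inc(P))$ has a single element and an acyclic orientation of it is simply an acyclic orientation $O$ of $\inc(P)$: the map produces a $P$-descent-free $P$-permutation $U = v_1\cdots v_n$ by repeatedly deleting the least-indexed source of $O$ and appending it, while its inverse orients $\inc(P)$ by directing $j\to k$ whenever $j,k$ are incomparable in $P$ and $j$ precedes $k$ in $U$. Two verifications then finish the argument. First, an edge $j\to k$ of $O$ is precisely an incomparable pair with $j$ to the left of $k$ in $U$, so the edges counted by $\inv(O)$ (those whose tail exceeds their head, as integers) are exactly the pairs counted by $\pinv(U)$; hence $\inv(O)=\pinv(U)$. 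Second, a position of $U$ is a source of the associated orientation iff every earlier entry is $P$-comparable to it; and because $U$ has no $P$-descents, such a position must actually dominate all earlier entries, i.e.\ it is a $P$-record --- take the latest earlier position lying above it, note that all positions strictly between are comparable, hence lie below it, and apply transitivity to produce a $P$-descent, a contradiction; the converse implication is immediate. Thus $O$ has exactly $k$ sources iff $U$ has exactly $k$ $P$-records, and combining the three identifications yields the proposition.

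The main obstacle is the second verification in the last paragraph: the equivalence ``source of $O$ $\iff$ $P$-record of $U$'' is the only place that genuinely uses the absence of $P$-descents, through the short transitivity argument above; the statistic identity $\inv(O)=\pinv(U)$ and the reduction through \cite[Cor.\,7.5]{CHSSkanEKL} are routine. If one preferred to make the ``first $=$ third'' step self-contained rather than invoking \cite[Thm.\,5.3]{SWachsChromQ}, the difficulty would instead lie in carrying the inversion weight through a deletion--contraction recursion, which is standard but lengthier.
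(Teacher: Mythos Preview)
The paper does not actually prove Proposition~\ref{p:swksources}; it is simply attributed to Shareshian and Wachs~\cite[Thm.\,5.3]{SWachsChromQ} and stated without argument. So there is no ``paper's own proof'' to compare against, and your proposal is strictly more than the paper provides.

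Your argument is correct. For the equality of the first and third sums you invoke~\cite[Thm.\,5.3]{SWachsChromQ} via the specialization $e_j\mapsto t$, which is exactly the attribution the paper gives; this is fine, and the remark that it $q$-refines Stanley's deletion--contraction proof of Proposition~\ref{p:stanksources} is accurate context. For the equality of the second and third sums you supply a genuine bijective argument, restricting the acyclic-orientation/row-semistrict-tableau bijection from the proof of Theorem~\ref{t:etalambdaP} to shape~$(n)$ and verifying that it carries $\inv(O)$ to $\pinv(U)$ and sources to $P$-records. Both verifications are sound: the statistic identity is immediate from the description of the inverse bijection, and your contradiction argument for ``source $\Rightarrow$ $P$-record'' (take the latest earlier entry above $v$, observe the next entry must lie below $v$ by the source hypothesis and maximality, and obtain a $P$-descent by transitivity) is clean and uses exactly the descent-freeness of $U$. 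Note that this second--third equality in fact holds for any labeled poset $P$, not just unit interval orders; the unit interval hypothesis enters only to make $X_{\inc(P),q}$ symmetric so that the elementary coefficients $\phi_q^\lambda(\inc(P))$ are well-defined.
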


  It would be interesting to similarly refine
  Proposition~\ref{p:sumphi}.
  \bp
  Let $P$ be an $n$-element
  unit interval order labeled as in (\ref{eq:altrespect}),
  and let $w(P)$ be the corresponding $312$-avoiding permutation
  as in (\ref{eq:uioto312avoid}).  Find functions
  $\delta_1$, $\delta_2$ so that
  \begin{equation*}
    \sumsb{\lambda \vdash n\\ \ell(\lambda) = k} \ntksp \phi_q^\lambda(\inc(P))
    = \nTksp \sumsb{v \leq w\\ \delta_1(v,w) = k} \nTksp q^{\ell(v)}
    = \nTksp \sumsb{\pexc(U) = 0\\ \delta_2(U) = k} \nTksp q^{\inv(U)},
  \end{equation*}
  where $U$ in the final sum is a $P$-permutation.
\ep  
Finally, it would be interesting to combinatorially settle
the Shareshian-Wachs conjecture~\cite[Conj.\,5.1]{SWachsChromQ} that
$\phi_q^\lambda(\inc(P)) \in \mathbb N[q]$ as follows.
\bp
For each $n$-element unit interval order $P$
labeled as in (\ref{eq:altrespect}), and
each partition $\lambda \vdash n$,
define subsets $\mathcal S_1(\lambda)$,
$\mathcal S_2(\lambda)$,
$\mathcal S_3(\lambda)$,
$\mathcal S_4(\lambda)$
of acyclic orientations of $\inc(P)$,
$P$-descent-free $P$-permutations,
$\{ v \in \sn \,|\, v \leq w(P) \}$,
and $P$-excedance free $P$-permutations, respectively, so that
\begin{equation*}
  \phi_q^\lambda(\inc(P))
  = \ntksp \sum_{O \in \mathcal S_1(\lambda)} \ntksp q^{\inv(O)}
  = \ntksp \sum_{U \in \mathcal S_2(\lambda)} \ntksp q^{\pinv(U)}
  = \ntksp \sum_{v \in \mathcal S_3(\lambda)} \ntksp q^{\ell(v)}
  = \ntksp \sum_{U \in \mathcal S_4(\lambda)} \ntksp q^{\inv(U)}.
  \end{equation*}
\ep



\section{Acknowledgements}
The author is grateful to an anonymous referee for numerous corrections
and suggestions which helped improve the article significantly.

\end{document}